\def\multiset#1#2{\ensuremath{\left(\kern-.5em\left(\genfrac{}{}{0pt}{}{#1}{#2}\right)\kern-.5em\right)}}
\tikzstyle{vertex}=[circle, draw, inner sep=0pt, minimum size=4pt]
\newcommand{\vertex}{\node[vertex]}
\definecolor{applegreen}{rgb}{0.55,0.71,0.0}
\newtheorem{theorem}{Theorem}[section]
\newtheorem{lemma}[theorem]{Lemma}
\newtheorem{prop}[theorem]{Proposition}
\newtheorem{corollary}[theorem]{Corollary}
\theoremstyle{definition}
\newtheorem{defn}[theorem]{Definition}
\newtheorem{example}[theorem]{Example}
\newtheorem{remark}[theorem]{Remark}
\newcommand\ba{\mathbf{a}}
\newcommand\bb{\mathbf{b}}
\newcommand\bc{\mathbf{c}}
\newcommand\bd{\mathbf{d}}
\newcommand\be{\mathbf{e}}
\newcommand\bh{\mathbf{h}}
\newcommand\bp{\mathbf{p}}
\newcommand\bq{\mathbf{q}}
\newcommand\bs{\mathbf{s}}
\newcommand\bt{\mathbf{t}}
\newcommand\bu{\mathbf{u}}
\newcommand\bv{\mathbf{v}}
\newcommand\bbN{\mathbb{N}}
\newcommand\bbR{\mathbb{R}}
\newcommand\bbZ{\mathbb{Z}}
\newcommand\calC{\mathcal{C}}
\newcommand\calD{\mathcal{D}}
\newcommand\calF{\mathcal{F}}
\newcommand\calG{\mathcal{G}}
\newcommand\calO{\mathcal{O}}
\newcommand\calT{\mathcal{T}}
\newcommand\calU{\mathcal{U}}
\newcommand\fS{\mathfrak{S}}
\newcommand\lp{\mathrm{lp}}
\newcommand\rp{\mathrm{rp}}
\newcommand\indeg{\mathrm{in}}
\newcommand\outdeg{\mathrm{out}}
\newcommand\ingrav{\mathcal{G}}
\newcommand\outgrav{\mathcal{G}}
\newcommand\PS{\mathrm{PS}}
\DeclareMathOperator{\MCar}{MCar}
\DeclareMathOperator{\Car}{Car}
\DeclareMathOperator{\Cat}{Cat}
\DeclareMathOperator{\PF}{PF}
\DeclareMathOperator{\vol}{vol}
\def\car#1#2{\ensuremath{\Car_{#1}^{(#2)}}}
\def\mcar#1#2{\ensuremath{\MCar_{#1}^{(#2)}}}
\definecolor{Cerulean}{cmyk}{0.94,0.11,0,0}
\definecolor{lava}{rgb}{0.81, 0.06, 0.13}
\definecolor{bleudefrance}{rgb}{0.19, 0.55, 0.91}
\definecolor{kellygreen}{rgb}{0.3, 0.73, 0.09}
\title{A Fuss-Catalan variation of the caracol flow polytope}
\author{Martha Yip}
\address{Department of Mathematics, University of Kentucky, Lexington KY 40506.}
\email{martha.yip@uky.edu}
\date{\today}
\thanks{The author is partially supported by a Simons Collaboration Grant.  She also thanks C. Benedetti, R. Gonz\'alez D'L\'eon, C. Hanusa, P. Harris, A. Khare, and A. Morales for inspiring conversations related to this work. This is dedicated to RFC}
\begin{document}
\begin{abstract}
Recently, a combinatorial interpretation of Baldoni and Vergne's generalized Lidskii formula for the volume of a flow polytope was developed by Benedetti et al.. This converts the problem of computing Kostant partition functions into a problem of enumerating a set of objects called unified diagrams. 
We devise an enhanced version of this combinatorial model to compute the volumes of flow polytopes defined on a family of graphs called the $k$-caracol graphs, resulting in the first application of the model to non-planar graphs. At $k=1$ and $k=n-1$, we recover results for the classical caracol graph and the Pitman--Stanley graph. Furthermore, we introduce the notion of in-degree gravity diagrams for flow polytopes, which are equinumerous with (out-degree) gravity diagrams considered by Benedetti et al.. We show that for the $k$-caracol flow polytopes, these two kinds of gravity diagrams satisfy a natural combinatorial correspondence, which raises an intriguing question on the relationship in the geometry of two related polytopes.


\end{abstract}
\maketitle 
\tableofcontents

\section{Introduction}

In the paper~\cite{BGHHKMY}, we developed a combinatorial model for computing the volume of flow polytopes $\calF_G(\ba)$ which is based on the {\em generalized Lidskii volume formula} due to Baldoni and Vergne~\cite{BV}.  We defined combinatorial objects called {\em unified diagrams}, whose enumeration gives the normalized volume of the flow polytope. We showed that the model can be applied to compute the volume of the flow polytopes of the Pitman--Stanley graph, the zigzag graph, and the caracol graph at various net flows, without the need for constant term identities. 

In this paper, we show that the combinatorial model can be applied to compute the volume of the flow polytopes of a family of graphs which we call the {\em $k$-caracol graphs}. Setting $k=1$ recovers the results obtained for the caracol graph developed in~\cite{BGHHKMY}, and setting $k=n-1$ recovers some of the results for the Pitman--Stanley polytope~\cite{PS}.

We note that this is the first application of the combinatorial model to non-planar graphs.  Indeed, the motivation for studying the flow polytopes of the $k$-caracol graphs was borne from the desire to understand the combinatorics of the flow polytope of the complete graph.  The Chan--Robbins-Yuen polytope $\mathrm{CRY}(n)$~\cite{CRY} can be realized as the flow polytope of the complete graph $K_{n+1}$ with net flow vector $\ba=\varepsilon_1-\varepsilon_{n+1}$.  A well-known result due to Zeilberger~\cite{Z} states that the volume of $\mathrm{CRY}(n)$ is the product of the first $n-2$ Catalan numbers.  Despite the combinatorial nature of the formula, the proof relies on an application of the Morris constant term identity, and no combinatorial proof is known. 

Other generalizations of the volume of the flow polytope of the complete graph $K_{n+1}$ also involve products of combinatorial quantities.  At net flow $\ba=\varepsilon_1+\varepsilon_2-2\varepsilon_{n+1}$, Corteel, Kim, and M\'esz\'aros~\cite{CKM} showed that the volume is \hbox{$2^{{n\choose2}-1}$} times the product of the first $n-2$ Catalan numbers, while at net flow $\ba=\sum_{i=1}^n \varepsilon_i - n\varepsilon_{n+1}$, M\'esz\'aros, Morales, and Rhoades~\cite{MMR} showed that the volume is the number of standard Young tableaux of staircase shape $(n-1,n-2,\ldots, 2,1,0)$ times the product of the first $n-1$ Catalan numbers.  Both of these generalizations rely on the Morris constant term identity as well. 

Combinatorial objects such as Dyck paths and parking functions appeared naturally in the study of the Pitman--Stanley polytope, 
which is affinely equivalent to the flow polytope of the Pitman--Stanley graph.
These objects play a central role in the unified diagrams for flow polytopes, and we saw in~\cite{BGHHKMY} that the volume of the flow polytope of the caracol graph with net flow $\ba=\varepsilon_1-\varepsilon_{n+1}$ is the Catalan number $\Cat(n-2)$, while with net flow $\ba= \sum_{i=1}^n \varepsilon_i - n\varepsilon_{n+1}$, the volume is $\Cat(n-2)\cdot n^{n-2}$, the product of a Catalan number and the number of parking functions of length $n-1$. A main result of this paper is a generalization of this to the $k$-caracol family of graphs.\\

\noindent{\bf Theorem~\ref{thm.oneoneone}.} For $k\in\bbN$ and $n>k$,
$$\vol\calF_{\car{n+1}{k}} (1,\ldots, 1,-n)
=\Cat(n-k,k(n-k)-1)\cdot k^{k(n-k)-2} \cdot n^{n-k-1},$$
where $\Cat(a,b)$ is a {\em rational Catalan number} (see~\cite{ALW}).  For the special case when $b=ka-1$, it is a {\em generalized Fuss-Catalan number}, and when $b=a+1$, it is the classical Catalan number.

Many of the ideas from~\cite{BGHHKMY} are generalized in this present paper, but a refinement of the original combinatorial model is necessary in order to explain the appearance of the factor $k^{k(n-k)-2}$ in Theorem~\ref{thm.oneoneone}, which is undetected when $k=1$.  We therefore introduce the {\em truncated unified diagrams}, whose `completions' are the standardized diagrams.  The truncated diagrams are enumerated by the {\em $k$-parking numbers} (see the Appendix) 
$$T_k(r,i) = (r+1)^{i-1} \multiset{k(r+1)}{r-i},$$
which interpolate between the generalized Fuss-Catalan numbers and the number of parking functions. We give these numbers a combinatorial interpretation involving a vehicle-parking scenario in Thereom~\ref{thm.multilabel}.  The formula in Theorem~\ref{thm.oneoneone} is then obtained by a binomial transform of these numbers, up to a power of $k$.

This power of $k$ arises from counting the completions of the truncated diagrams to standardized diagrams.  The factor of $k$ which appears in the formula of Theorem~\ref{thm.oneoneone} can be explained by considering a cyclic group action on the set of truncated diagrams, together with a delightful partitioning of the `$N$-th multinomial $(k-1)$-simplex' (better known as the `$N$-th row of Pascal's triangle' in the case $k=2$), whose entries sum to $k^N$ (Lemma~\ref{lem.thelemma}). 



This paper is organized as follows. In Section~\ref{sec.lidskii}, we introduce the family of $k$-caracol graphs, state the generalized Lidskii volume formulas, and introduce one of the key ingredients of a unified diagram, called a {\em gravity diagram}.  These are a combinatorial interpretation of the {\em Kostant partition function}.  We also discuss the necessary background on rational Catalan combinatorics, and then give two bijective proofs to show that the volume of the flow polytope of the $k$-caracol graph $\car{n+1}{k}$ at unit flow $\ba=(1,0,\ldots,0,-1)$ is the generalized Fuss-Catalan number $\Cat(n-k,k(n-k)-1)$.
The combinatorics arising from Theorems~\ref{thm.onezerozero} and~\ref{prop.outgrav} give rise to an interesting geometric question (Remark~\ref{rem.duality}).
In Section~\ref{sec.kcaracol}, we introduce the unified diagrams for the flow polytope of the $k$-caracol graphs, and its variations.  We define the $k$-parking numbers, and show that they enumerate the truncated unified diagrams (Theorem~\ref{thm.multilabel}). 
Having developed all the necessary enumerative tools, we end the section with a proof of Theorem~\ref{thm.oneoneone}.
In Section~\ref{sec.abbb}, we discuss a generalization of the volume of the flow polytopes of the $k$-caracol graphs at more general net flow vectors in Theorem~\ref{thm.abbb}, and show that $k$-parking numbers form {\em log-concave} sequences.
Finally in Section~\ref{sec.mcar}, we discuss some results for a multigraph which we call the {\em $k$-multicaracol graph}, whose volume formulas are closely related to those for the $k$-caracol graph at various net flows (Theorem~\ref{thm.mcarabbb}).  We end with a suggestion of an alternative pathway towards another combinatorial proof of Theorem~\ref{thm.oneoneone}.

\section{Volume of the $k$-caracol polytope with unit flow}\label{sec.lidskii}

\subsection{Flow polytopes and the $k$-caracol graphs}

We define the family of {\em $k$-caracol graphs}.
\begin{defn} \label{defn.kcar}
Let $k\in\bbN=\bbZ_{\geq0}$ and $n>k$.  The directed graph $G=\car{n+1}{k}$ has vertex set $V(G)=\{1,\ldots, n+1\}$ and edge set
$$
E(G)=\left\{(i,i+1), (i,k+1), \ldots, (i,n) \mid 1\leq i\leq k\right\}
\cup
\left\{(i,i+1), (i,n+1) \mid k+1 \leq i\leq n \right\}.
$$
For clarity, we point out that $\car{n+1}{k}$ is a graph without multiple edges, and note that the number of edges in $\car{n+1}{k}$ is $m=(k+1)(n-k)+n-2$ for. The graph $\car{n+1}{1}$ is the caracol graph, and the graph $\car{n+1}{n-1}$ is the Pitman--Stanley graph $\PS_n$ with an additional edge $(n,n+1)$. The flow polytopes of both graphs were previously studied in~\cite{BGHHKMY}. We point out that our definition of $\PS_n$ differs from others found in the literature in that the edge $(n-1,n)$ is not repeated.
\end{defn}

\begin{figure}[ht]
\begin{center}
\begin{tikzpicture}[scale=0.8]
\begin{scope}[yshift=0]
	\vertex[fill,label=below:\footnotesize{$1$}](a0) at (0,0) {};
	\vertex[fill,label=below:\footnotesize{$2$}](a1) at (1,0) {};
	\vertex[fill,label=below:\footnotesize{$3$}](a2) at (2,0) {};
	\vertex[fill,label=below:\footnotesize{$4$}](a3) at (3,0) {};
	\vertex[fill,label=below:\footnotesize{$5$}](a4) at (4,0) {};
		\node at (4.5,0) {$\cdots$};
	\vertex[fill,label=above :\footnotesize{$n-1$}](a10) at (5,0) {};
	\vertex[fill,label=above:\footnotesize{$n$}](a11) at (6,0) {};
	\vertex[fill,label=above:\footnotesize{$n+1$}](a12) at (7,0) {};
	\draw[->, >=stealth, thick] (a0)--(a1);		
	\draw[-stealth, thick] (a1)--(a2);
	\draw[-stealth, thick] (a2)--(a3);
	\draw[-stealth, thick] (a3)--(a4);
	\draw[thick] (a4)--(4.15,0); \draw[thick] (4.75,0)--(4.85,0);
		\draw[-stealth, thick] (4.85,0)--(a10);		
	\draw[-stealth, thick] (a10)--(a11);
	\draw[-stealth, thick] (a11) to (a12);
	\draw[-stealth, thick] (a10) to[out=-50,in=240] (a12);
	\draw[-stealth, thick] (a4) to[out=-50,in=240] (a12);
	\draw[-stealth, thick] (a3) to[out=-50,in=240] (a12);
	\draw[-stealth, thick] (a2) to[out=-50,in=240] (a12);
	\draw[-stealth, thick] (a1) to[out=-50,in=240] (a12);
	\draw[-stealth, thick] (a0) to[out=-50,in=240] (a12);
\end{scope}
\begin{scope}[xshift=250]
	\vertex[fill,label=below:\footnotesize{$1$}](a0) at (0,0) {};
	\vertex[fill,label=below:\footnotesize{$2$}](a1) at (1,0) {};
	\vertex[fill,label=below:\footnotesize{$3$}](a2) at (2,0) {};
	\vertex[fill,label=below:\footnotesize{$4$}](a3) at (3,0) {};
	\vertex[fill,label=below:\footnotesize{$5$}](a4) at (4,0) {};
		\node at (4.5,0) {$\cdots$};
	\vertex[fill,label=above :\footnotesize{$n-1$}](a10) at (5,0) {};
	\vertex[fill,label=above:\footnotesize{$n$}](a11) at (6,0) {};
	\vertex[fill,label=above:\footnotesize{$n+1$}](a12) at (7,0) {};
	\draw[-stealth, thick] (0,0)--(.95,0);		
	\draw[-stealth, thick] (1,0)--(1.95,0);
	\draw[-stealth, thick] (2,0)--(2.95,0);
	\draw[-stealth, thick] (3,0)--(3.95,0);
	\draw[thick] (4,0)--(4.15,0); \draw[thick] (4.75,0)--(4.85,0);
		\draw[-stealth, thick] (4.85,0)--(4.95,0);		
	\draw[-stealth, thick] (5,0)--(5.95,0);
	\draw[-stealth, thick] (6,0) to (6.95,0);
	\draw[-stealth, thick] (a0) to[out=25,in=130] (a2);
	\draw[-stealth, thick] (a0) to[out=30,in=130] (a3);
	\draw[-stealth, thick] (a0) to[out=35,in=130] (a4);
	\draw[-stealth, thick] (a0) to[out=40,in=130] (a10);
	\draw[-stealth, thick] (a0) to[out=45,in=130] (a11);
	\draw[-stealth, thick] (a10) to[out=-50,in=240] (a12);
	\draw[-stealth, thick] (a4) to[out=-50,in=240] (a12);
	\draw[-stealth, thick] (a3) to[out=-50,in=240] (a12);
	\draw[-stealth, thick] (a2) to[out=-50,in=240] (a12);
	\draw[-stealth, thick] (a1) to[out=-50,in=240] (a12);
\end{scope}
\end{tikzpicture}
\end{center}
\caption{The graphs $\PS_{n+1}$ and $\car{n+1}{1}=\Car_{n+1}$.}
\end{figure}
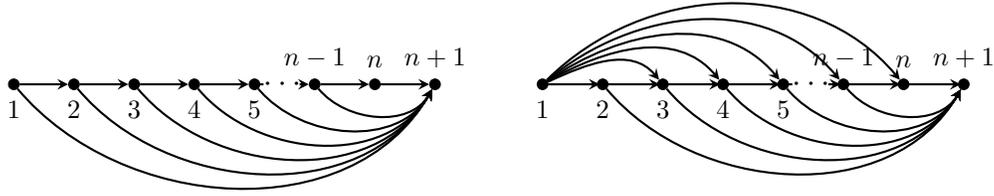

\begin{figure}[th]
\begin{center}
\begin{tikzpicture}[scale=0.8]
\begin{scope}	
	\vertex[fill,label=below:\footnotesize{$1$}](a0) at (0,0) {};
	\vertex[fill,label=below:\footnotesize{$2$}](a1) at (1,0) {};
	\vertex[fill,label=below:\footnotesize{$3$}](a2) at (2,0) {};
	\vertex[fill,label=below:\footnotesize{$4$}](a3) at (3,0) {};
	\vertex[fill,label=below:\footnotesize{$5$}](a4) at (4,0) {};
	\vertex[fill,label=below:\footnotesize{$6$}](a10) at (5,0) {};
	\vertex[fill,label=above:\footnotesize{$7$}](a11) at (6,0) {};
	\vertex[fill,label=above:\footnotesize{$8$}](a12) at (7,0) {};
	
	\draw[-stealth, thick] (0,0)--(.95,0);		
	\draw[-stealth, thick] (1,0)--(1.95,0);
	\draw[-stealth, thick] (2,0)--(2.95,0);
	\draw[-stealth, thick] (3,0)--(3.95,0);
	\draw[-stealth, thick] (4,0)--(4.95,0);	
	\draw[-stealth, thick] (5,0)--(5.95,0);
	\draw[-stealth, thick] (6,0) to (6.95,0);
	\draw[-stealth, thick] (a0) to[out=25,in=130] (a2);
	\draw[-stealth, thick] (a0) to[out=30,in=130] (a3);
	\draw[-stealth, thick] (a0) to[out=35,in=130] (a4);
	\draw[-stealth, thick] (a0) to[out=40,in=130] (a10);
	\draw[-stealth, thick] (a0) to[out=45,in=130] (a11);
	
	\draw[-stealth, thick] (a1) to[out=25,in=130] (a3);
	\draw[-stealth, thick] (a1) to[out=30,in=130] (a4);
	\draw[-stealth, thick] (a1) to[out=35,in=130] (a10);
	\draw[-stealth, thick] (a1) to[out=40,in=130] (a11);
	\draw[-stealth, thick] (a10) to[out=-50,in=230] (a12);
	\draw[-stealth, thick] (a4) to[out=-50,in=235] (a12);
	\draw[-stealth, thick] (a3) to[out=-50,in=240] (a12);
	\draw[-stealth, thick] (a2) to[out=-50,in=240] (a12);
\end{scope}

\begin{scope}[xshift=250, yshift=0]
	\vertex[fill,label=below:\footnotesize{$1$}](a0) at (0,0) {};
	\vertex[fill,label=below:\footnotesize{$2$}](a1) at (1,0) {};
	\vertex[fill,label=below:\footnotesize{$3$}](a2) at (2,0) {};
	\vertex[fill,label=below:\footnotesize{$4$}](a3) at (3,0) {};
	\vertex[fill,label=below:\footnotesize{$5$}](a4) at (4,0) {};
	\vertex[fill,label=below:\footnotesize{$6$}](a10) at (5,0) {};
	\vertex[fill,label=above:\footnotesize{$7$}](a11) at (6,0) {};
	\vertex[fill,label=above:\footnotesize{$8$}](a12) at (7,0) {};
	
	\draw[-stealth, thick] (0,0)--(.95,0);		
	\draw[-stealth, thick] (1,0)--(1.95,0);
	\draw[-stealth, thick] (2,0)--(2.95,0);
	\draw[-stealth, thick] (3,0)--(3.95,0);
	\draw[-stealth, thick] (4,0)--(4.95,0);	
	\draw[-stealth, thick] (5,0)--(5.95,0);
	\draw[-stealth, thick] (6,0) to (6.95,0);
	\draw[-stealth, thick] (a0) to[out=30,in=130] (a3);
	\draw[-stealth, thick] (a0) to[out=35,in=130] (a4);
	\draw[-stealth, thick] (a0) to[out=40,in=130] (a10);
	\draw[-stealth, thick] (a0) to[out=45,in=130] (a11);
	
	\draw[-stealth, thick] (a1) to[out=25,in=130] (a3);
	\draw[-stealth, thick] (a1) to[out=30,in=130] (a4);
	\draw[-stealth, thick] (a1) to[out=35,in=130] (a10);
	\draw[-stealth, thick] (a1) to[out=40,in=130] (a11);
	
	\draw[-stealth, thick] (a2) to[out=30,in=130] (a4);
	\draw[-stealth, thick] (a2) to[out=35,in=130] (a10);
	\draw[-stealth, thick] (a2) to[out=40,in=130] (a11);
	\draw[-stealth, thick] (a10) to[out=-50,in=230] (a12);
	\draw[-stealth, thick] (a4) to[out=-50,in=235] (a12);
	\draw[-stealth, thick] (a3) to[out=-50,in=240] (a12);
\end{scope}
\end{tikzpicture}

\end{center}
\caption{The graphs $\car82$ and $\car83$.}
\end{figure}
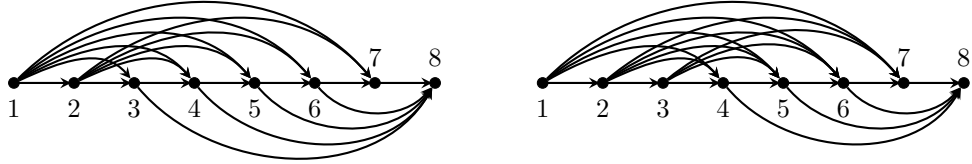

\begin{defn}\label{defn.graph}
Let $G$ be a connected acyclic directed graph with vertex set $V(G)=\{1,\ldots, n+1\}$ and edge multiset $E(G)$ with $m$ edges.  Further assume that
\begin{itemize}
\item[(a)] the out-degree of each of the vertices $1$ through $n$ is at least one,
\item[(b)] the in-degree of each of the vertices $2$ through $n+1$ is at least one,
\item[(c)] the edges of $G$ are each directed from $i$ to $j$ if $i<j$.
\end{itemize}
For $i=1,\ldots, n$, let $t_i = \outdeg_i-1$ be one less than the out-degree of the vertex $i$. The {\em shifted out-degree vector} of $G$ is $\bt=(t_1,\ldots,t_n)\in\bbZ_{\geq0}^n$. 
Similarly for $i=2,\ldots, n+1$, let $u_i=\indeg_i-1$ be one less than the in-degree of the vertex $i$. The {\em shifted in-degree vector} of $G$ is $\bu = (u_2,\ldots,u_{n+1})\in\bbZ_{\geq0}^n$.
Note that $\sum_{i=1}^n t_i= \sum_{i=2}^{n+1}u_i = m-n$.

Given $\ba=(a_1,\ldots, a_n, -\sum_{i=1}^n a_i)$ with $a_i \in \bbZ_{\geq0}$, an {\em $\ba$-flow on $G$} is tuple $(f_e)_{e\in E(G)} \in \bbR_{\geq0}^{m}$ such that
$$\sum_{(j,k)\in E(G)} f_{(j,k)}  - \sum_{(i,j)\in E(G)} f_{(i,j)} = a_j $$
for $j=1,\ldots, n$. The {\em flow polytope of $G$ with net flow $\ba$} is the set $\calF_G(\ba)$ of $\ba$-flows on $G$.  Note that $\dim\calF_G(\ba) = m-n$.
\end{defn}

\subsection{The generalized Lidskii formulas}

\begin{defn}
For $i=1,\ldots, n$, let $\alpha_i = \varepsilon_i-\varepsilon_{i+1}$, where $\varepsilon_i$ is the $i$-th standard basis vector of $\bbR^{n+1}$.
To each edge $e=(i,j)$ of $G$, we associate the vector 
$$\alpha_e = \alpha_{(i,j)} = \alpha_i+\cdots + \alpha_{j-1} = \varepsilon_i-\varepsilon_j,$$
and let $\Phi_G^+ = \{\alpha_e \mid e\in E(G)\}$ be the multiset of {\em positive roots associated to $G$}.

The {\em Kostant partition function of $G$ evaluated at $\ba$}, denoted by $K_G(\ba)$, is the number of ways of expressing the vector $\ba\in \bbZ^{n+1}$ as a linear combination of the vectors in $\Phi_G^+$.  
\end{defn}

In this setting, integral $\ba$-flows on $G$ are equivalent to vector partitions of $\ba$.  Thus, the number of integral $\ba$-flows on $G$ is $K_G(\ba)$.

The {\em normalized volume} of a $d$-dimensional lattice polytope is $d!$ times its Euclidean volume.

\begin{theorem}[Lidskii formulas, {\cite[Theorem 38]{BV}}, {\cite[Theorem 1.1]{MM18}}]
\label{thm.lidskii}
Let $G$ be a connected acyclic directed graph with vertex set $\{1,\ldots, n+1\}$ and $m$ edges, along with the additional properties as outlined in Definition~\ref{defn.graph}.
The normalized volume of the flow polytope $\calF_G(\ba)$ of $G$ with net flow vector $\ba=(a_1,\ldots,a_n, -\sum_{i=1}^n a_i)$ is
$$\vol\calF_G(\ba) = \sum_{\bs \rhd \bt} {m-n \choose s_1,\ldots,s_n}\cdot 
	a_1^{s_1}\cdots a_n^{s_n}\cdot 
		K_{G}(s_1-t_1, \ldots, s_n-t_n, 0),$$
and the number of lattice points of $\calF_G(\ba)$ is
\begin{align*}
K_G(\ba) 
&= \sum_{\bs\rhd\bt} 
	{a_1+t_1\choose s_1} \cdots {a_n+t_n\choose s_n} \cdot
	K_G(s_1-t_1, \ldots, s_n-t_n, 0),\\
&= \sum_{\bs\rhd\bt} 
	\multiset{a_1-u_1}{s_1} \cdots \multiset{a_n-u_n}{s_n} \cdot
	K_G(s_1-t_1, \ldots, s_n-t_n, 0),
\end{align*}
where $K_G$ is the Kostant partition function of $G$, and the sum is over weak compositions $\bs=(s_1,\ldots,s_n) \vDash m-n$ such that $\sum_{i=1}^k s_i \geq \sum_{i=1}^k t_i$ for every $k$.
\end{theorem}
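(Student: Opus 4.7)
The plan is to prove the lattice-point formula for $K_G(\ba)$ first and then extract the volume formula as its top-degree asymptotic behavior, following the combinatorial strategy of~\cite{MM18} rather than the original residue approach of~\cite{BV}. The starting point is the generating-function identity
\begin{equation*}
K_G(\ba) = [\bz^{\ba}] \prod_{(i,j) \in E(G)} \frac{1}{1 - z_i z_j^{-1}},
\end{equation*}
where $[\bz^{\ba}]$ denotes extraction of the monomial $z_1^{a_1} \cdots z_{n+1}^{-\sum_i a_i}$; this follows directly from viewing $K_G(\ba)$ as the number of nonnegative integer decompositions of $\ba$ into the vectors $\alpha_e = \varepsilon_i - \varepsilon_j$ with $e=(i,j) \in E(G)$.

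I would extract the coefficient one variable at a time, in the elimination order $z_1, z_2, \ldots, z_n$. For each $z_j$, the relevant factors are those coming from the $t_j + 1$ edges leaving vertex $j$ in the partially reduced graph. Expanding each such factor as a geometric series and collecting the coefficient of the appropriate power of $z_j$ produces a sum over a parameter $s_j \geq t_j$; each summand contributes either a binomial $\binom{a_j + t_j}{s_j}$ or the multiset binomial $\multiset{a_j - u_j}{s_j}$, depending on whether one groups the expansion by out-edges or by the dual in-edge accounting, times a residual generating function whose coefficient extraction is the Kostant partition function of the reduced graph at the shifted argument $s_j - t_j$. The dominance constraint $\bs \rhd \bt$ emerges naturally as the condition that these shifted Kostant arguments remain nonnegative throughout the elimination.

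To pass from the lattice-point identity to the volume formula, I would invoke the asymptotic relation $\vol \calF_G(\ba) = \lim_{N \to \infty} (m-n)! \, N^{-(m-n)} K_G(N\ba)$, valid because $K_G(N\ba)$ is the Ehrhart polynomial of $\calF_G(\ba)$ and its top-degree coefficient is $\vol \calF_G(\ba)/(m-n)!$ (integrality of vertices in this family follows from unimodularity of the edge-vertex incidence matrix). Substituting $N\ba$ for $\ba$ in the lattice-point formula, the binomial $\binom{Na_j + t_j}{s_j}$ has leading behavior $(Na_j)^{s_j}/s_j!$; regrouping the factorials into $\binom{m-n}{s_1, \ldots, s_n}$ recovers the stated volume identity, with the Kostant argument $\bs - \bt$ unchanged since it is independent of $N$. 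The main obstacle, to my eye, is verifying that the admissibility condition $\bs \rhd \bt$ is preserved cleanly through the iterated coefficient extraction: one must check inductively that the shifted degree data of the partially reduced graph pairs correctly with the cumulative constraints on $\bs$, and this bookkeeping is where the combinatorial content of the theorem truly lives.
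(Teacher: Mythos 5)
First, a point of reference: the paper does not prove Theorem~\ref{thm.lidskii} at all --- it is quoted from Baldoni--Vergne \cite{BV} and M\'esz\'aros--Morales \cite{MM18} --- so there is no internal proof to compare against. Judged on its own terms, the outer shell of your proposal is sound: the passage from the lattice-point identity to the volume identity via Ehrhart asymptotics is correct, since $K_G(N\ba)$ is the Ehrhart polynomial of the integral polytope $\calF_G(\ba)$, every summand $\prod_j\binom{Na_j+t_j}{s_j}$ has degree $\sum_j s_j=m-n$ in $N$, and the leading coefficients reassemble into $\binom{m-n}{\bs}\ba^{\bs}$. That deduction is standard and unobjectionable.

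The gap is in the core step, the derivation of the lattice-point formula by iterated coefficient extraction. Expanding each factor $1/(1-z_iz_j^{-1})$ as a geometric series and extracting the coefficient of $z_j$ vertex by vertex merely re-derives the definition of $K_G(\ba)$ as a count of integer flows: at vertex $j$ the exponent of $z_j$ to be matched is $a_j$ plus the inflow already chosen, and distributing it over the $t_j+1$ outgoing edges contributes the single term $\binom{(a_j+\mathrm{inflow}_j)+t_j}{t_j}$, not a sum over a free parameter $s_j$. The parameters $\bs$ in the Lidskii formula form a weak composition of $m-n$, a quantity determined by the edge count rather than by the net flows, so they cannot arise as exponents in this extraction. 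Moreover the admissibility constraint is dominance of partial sums --- individual $s_j<t_j$, and individual negative entries of $\bs-\bt$, are allowed (indeed $\bv_{\mathrm{out}}=(m-n-t_1,-t_2,\ldots,-t_n,0)$ has negative entries) --- so your claims that each elimination yields ``$s_j\geq t_j$'' and that ``the shifted Kostant arguments remain nonnegative'' contradict the statement being proved. What is actually required is a nontrivial decomposition: either the polytopal subdivision of \cite{MM18} (and \cite{BGHHKMY}), in which the cells are indexed by integer flows with net flow $(\bs-\bt,0)$ and each cell is a product of simplices of normalized volume $\binom{m-n}{\bs}\ba^{\bs}$, or Baldoni--Vergne's residue argument, where the multinomial comes from expanding $e^{\langle\ba,z\rangle}$ and one must separately identify an iterated residue of $\prod_i z_i^{s_i}/\prod_e\langle\alpha_e,z\rangle$ with $K_G(\bs-\bt,0)$. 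That identification (or the subdivision) is the actual content of the theorem, and it is absent from your sketch.
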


The special case of the Lidskii volume formula at $\ba=(1,0,\ldots, 0,-1)$ plays a central role in the following sections.
\begin{corollary}\label{cor.main} Let $G$ be a directed graph with $n+1$ vertices and $m$ edges, with shifted out-degree and in-degree vectors $\bt=(t_1,\ldots, t_n)$ and $\bu=(u_2,\ldots, u_{n+1})$. Then
\begin{align*}
\vol\calF_G(1,0,\ldots,0,-1)
	&= K_G(m-n-t_1, -t_2,\ldots, -t_n, 0), \label{eqn.outgrav}\\
	&= K_G(0, u_2, \ldots, u_n, u_{n+1}-(m-n)). 
\end{align*}
\end{corollary}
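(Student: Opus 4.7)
The plan is to obtain both equalities by specializing the generalized Lidskii volume formula of Theorem~\ref{thm.lidskii} at $\ba = (1,0,\ldots,0,-1)$, with the second equality additionally exploiting an edge-reversal symmetry of $G$.

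For the first equality, I would substitute $\ba$ directly into the volume formula. The monomial $a_1^{s_1}\cdots a_n^{s_n}$ vanishes unless $s_2 = \cdots = s_n = 0$, and since $\bs \vDash m-n$ this forces $\bs = (m-n, 0, \ldots, 0)$. A quick check confirms that $\bs \rhd \bt$: for every $k$, $\sum_{i=1}^k s_i = m-n \geq \sum_{i=1}^k t_i$, since the $t_i$ are non-negative and sum to $m-n$. The multinomial coefficient equals $1$, so the Lidskii sum collapses to the single term $K_G(m-n-t_1, -t_2, \ldots, -t_n, 0)$.

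For the second equality, I would compare $G$ with the edge-reversed graph $G'$ whose edge set is $\{(n+2-j, n+2-i) : (i,j)\in E(G)\}$. The first task is to check that $G'$ again satisfies the hypotheses of Definition~\ref{defn.graph}: indeed, the in-degrees of $G$ become the out-degrees of $G'$ under the involution $i\mapsto n+2-i$, so the shifted out-degree vector of $G'$ is $\bt' = (u_{n+1}, u_n, \ldots, u_2)$. The obvious edge bijection identifies a flow on $G$ with net flow $(a_1,\ldots,a_{n+1})$ with a flow on $G'$ with net flow $(-a_{n+1}, \ldots, -a_1)$; at $\ba = (1,0,\ldots,0,-1)$ both net flows coincide, so $\vol\calF_G(\ba) = \vol\calF_{G'}(\ba)$. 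Applying the first equality to $G'$ gives
$$\vol\calF_G(1,0,\ldots,0,-1) = K_{G'}(m-n-u_{n+1}, -u_n, \ldots, -u_2, 0),$$
and then the same edge bijection, which yields $K_{G'}(v_1,\ldots,v_{n+1}) = K_G(-v_{n+1}, \ldots, -v_1)$, converts the right-hand side into $K_G(0, u_2, \ldots, u_n, u_{n+1}-(m-n))$.

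The main delicacy I anticipate is the bookkeeping in the graph-reversal step: verifying that $G'$ inherits the three properties of Definition~\ref{defn.graph}, and then tracking both the reversal of coordinate order and the sign change so that positive quantities like $u_i$ appear with the expected signs in the final expression. A cleaner alternative, if a version of the Lidskii volume formula phrased in terms of $\bu$ rather than $\bt$ is available (mirroring the two lattice-point versions in Theorem~\ref{thm.lidskii}), would be to apply that dual formula directly, in which case the substitution $\ba = (1,0,\ldots,0,-1)$ again collapses the sum to a single term yielding the second equality in one step.
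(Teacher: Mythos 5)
Your proposal is correct and follows the route the paper intends: the paper states this as an immediate consequence of Theorem~\ref{thm.lidskii} (attributing the two-polytope observation to M\'esz\'aros--Morales), and the first line is exactly the collapse of the Lidskii sum to the single term $\bs=(m-n,0,\ldots,0)$ that you describe. Your graph-reversal derivation of the second line, including the checks that the reversed graph satisfies Definition~\ref{defn.graph} and that $K_{G'}(v_1,\ldots,v_{n+1})=K_G(-v_{n+1},\ldots,-v_1)$, is the standard way to obtain the dual ($\bu$-indexed) form and is carried out correctly.
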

Thus, the volume of the flow polytope $\calF_G(1,0,\ldots,0,-1)$ can be computed by counting the number of lattice points of two related polytopes, as noted by M\'esz\'aros and Morales in~\cite{MM18}.

\subsection{Kostant partition functions and gravity diagrams}

In~\cite{BGHHKMY}, we introduced a combinatorial interpretation of the Kostant partition function of a graph $G$, which we called (out-degree) gravity diagrams.  Here, we introduce an analogous notion of in-degree gravity diagrams.  

\begin{defn}
A vector $\bv = (v_1,\ldots, v_{n+1}) = c_1\alpha_1 + \cdots + c_n\alpha_n$ can be  represented by an array of $c_j$ dots in the $j$-th column, with the dots drawn so that the column is justified upward.
A positive root $\alpha_e = \alpha_{(i,j)} = \alpha_i+\cdots + \alpha_{j-1} \in \Phi_G^+$ can then be viewed as a line segment that joins dots in consecutive columns, from the $i$-th column to the $(j-1)$-th column.
So, given a partition $\bv=\sum_{\alpha_{(i,j)}\in \Phi_G^+} p_{(i,j)}[\alpha_{(i,j)}]$ of the vector $\bv$ using roots from $\Phi_G^+$, a {\em line-dot diagram} for $\bv$ with respect to $\Phi_G^+$ is a pictorial representation of the vector partition that consists of the array of dots for $\bv$, and $p_{(i,j)}$ line segments from the $i$-th column to the $(j-1)$-th column for each edge $(i,j)\in E(G)$ in which each dot is incident to at most one nontrivial line segment. We consider a single dot to be a line of length zero, or a trivial line segment.
\end{defn}

Of course, a given vector partition may have multiple line-dot diagram representations.  Two line-dot diagrams are {\em equivalent} if they represent the same vector partition,  and a {\em gravity diagram} is a representative of an equivalence class of line-dot diagrams. Let $\calG_G(\bv)$ denote a set of gravity diagrams for the vector $\bv$ with respect to the graph $G$.

\begin{theorem}[{\cite[Theorem 3.1]{BGHHKMY}}] \label{thm.gravitydiagrams}
Let $G$ be a directed graph with $n+1$ vertices and whose edges are directed from $i$ to $j$ if $i<j$. For any vector $\bv = (v_1,\ldots, v_{n+1})$ such that $\sum_{i=1}^{n+1}v_i=0$, 
$$K_G(\bv) = |\outgrav_G(\bv)|.$$
\end{theorem}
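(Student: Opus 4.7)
The plan is to exhibit an explicit bijection between vector partitions of $\bv$ into positive roots of $\Phi_G^+$ and equivalence classes of line-dot diagrams representing $\bv$. By definition $K_G(\bv)$ counts the first set, and $|\outgrav_G(\bv)|$ counts the second since each gravity diagram is a representative of one such equivalence class, so constructing the bijection is equivalent to proving the theorem.

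The easy direction goes from line-dot diagrams to partitions. Given any line-dot diagram $D$ for $\bv$, I would define $p_{(i,j)}$ to be the number of line segments in $D$ running from column $i$ to column $j-1$ (with trivial segments, i.e.\ isolated dots, interpreted as simple roots), and verify that $\sum_{(i,j)\in E(G)} p_{(i,j)}\alpha_{(i,j)} = \bv$. This reduces to checking that the coefficient of $\alpha_\ell$ on both sides agrees, i.e.\ that $\sum_{(i,j):\,i\leq \ell \leq j-1} p_{(i,j)} = c_\ell$, where $c_\ell$ is the number of dots in column $\ell$ prescribed by the simple-root expansion of $\bv$. The left side counts segments incident to column $\ell$, and by the defining condition that each dot supports at most one nontrivial segment, this count matches exactly the number of dots in that column. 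Since equivalence of line-dot diagrams is defined as representing the same vector partition, this construction descends to a well-defined injection from equivalence classes to partitions.

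The substance of the theorem lies in the reverse direction: every vector partition $\bv = \sum p_{(i,j)} \alpha_{(i,j)}$ must arise from some line-dot diagram. Given such a partition, I would draw $c_\ell$ dots in column $\ell$ and then attempt to thread $p_{(i,j)}$ arcs from column $i$ to column $j-1$ for each edge $(i,j)$. The central obstacle, which I expect to be the main technical step, is ensuring that the arcs can be assigned to dots so that no dot carries more than one nontrivial segment. This succeeds because the column-count identity $\sum_{(i,j):\,i\leq \ell \leq j-1} p_{(i,j)} = c_\ell$, which follows directly from the partition identity for $\bv$, guarantees that the supply of dots in each column exactly matches the demand from segments that must pass through it. A systematic left-to-right greedy assignment (extending arcs entering each column to distinct fresh dots before converting the remaining dots to trivial segments) then produces a valid line-dot diagram, giving the required surjectivity and completing the bijection.
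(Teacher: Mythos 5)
The paper does not prove this statement; it is imported verbatim from \cite[Theorem 3.1]{BGHHKMY}, so there is no in-paper argument to compare against. Your proof is correct and is essentially the intended one: since a gravity diagram is by definition a representative of an equivalence class of line-dot diagrams, and two diagrams are equivalent precisely when they encode the same vector partition, the only substantive point is that every partition $\bv=\sum p_{(i,j)}\alpha_{(i,j)}$ with roots in $\Phi_G^+$ admits at least one line-dot realization; you correctly isolate this as the crux and settle it with the column-count identity $c_\ell=\sum_{(i,j):\,i\le\ell\le j-1}p_{(i,j)}$, which guarantees that each column has exactly as many dots as there are (trivial and nontrivial) segments passing through it, so a greedy assignment of segments to distinct dots always succeeds.
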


By Corollary~\ref{cor.main}, we see that the volume of the flow polytope $\calF_G(1,0,\ldots,0,-1)$ can be computed by counting a set of associated gravity diagrams, provided that they can be described systematically.
There are two vectors which are most pertinent to the study of volumes of flow polytopes with unit flow $\ba=(1,0,\ldots,0,-1)$.

\begin{defn} Given a directed graph $G$ with shifted out-degree vector $\bt$,
the set of {\em out-degree gravity diagrams} of $G$ is a set of gravity diagrams for the vector 
\begin{align*}
\bv_{\mathrm{out}}
&= (m-n-t_1,-t_2,\ldots, -t_n,0)\\
&= (t_2+\cdots+t_n)\alpha_1 + (t_3+\cdots+t_n)\alpha_2 + \cdots + t_{n}\alpha_{n-1}
\end{align*}
with respect to the set of positive roots $\Phi_G^+$.  This is denoted by $\outgrav_G(\bv_{\mathrm{out}})$.

In a similar vein, given a directed graph $G$ with shifted in-degree vector $\bu$, the set of {\em in-degree gravity diagrams} of $G$ is a set of gravity diagrams for the vector
\begin{align*}
\bv_{\mathrm{in}}
&=(0,u_2, \ldots, u_n, u_{n+1}-(m-n))\\
&= u_2\alpha_2 + (u_2+u_3)\alpha_3 + \cdots + (u_2+\cdots+u_n)\alpha_n
\end{align*}
with respect to the set of positive roots $\Phi_G^+$.  This is denoted by $\ingrav_G(\bv_{\mathrm{in}})$.
\end{defn}

\begin{corollary} \label{cor.gravitydiagrams}
Combining Corollary~\ref{cor.main} and Theorem~\ref{thm.gravitydiagrams}, the volume of the flow polytope of $G$ with unit flow $\ba=(1,0,\ldots,0,-1)$ is equal to the number of out-degree gravity diagrams of $G$ and the number of in-degree gravity diagrams of $G$.
$$\vol\calF_G(1,0,\ldots,0,-1) = |\outgrav_G(\bv_{\mathrm{out}})| = |\ingrav_G(\bv_{\mathrm{in}})|. $$
\end{corollary}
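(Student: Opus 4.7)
The plan is simply to compose the two results already proved. Corollary~\ref{cor.main} identifies $\vol\calF_G(1,0,\ldots,0,-1)$ with two values of the Kostant partition function, namely $K_G(m-n-t_1, -t_2,\ldots, -t_n, 0)$ and $K_G(0, u_2, \ldots, u_n, u_{n+1}-(m-n))$, and by construction these arguments are exactly the vectors $\bv_{\mathrm{out}}$ and $\bv_{\mathrm{in}}$ appearing in the definition immediately preceding the corollary. Theorem~\ref{thm.gravitydiagrams} then rewrites each such value as $|\outgrav_G(\cdot)|$, provided its input is a vector with coordinate sum zero.

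The only hypothesis to check is that sum-to-zero condition. For $\bv_{\mathrm{out}}$, the coordinates total $(m-n-t_1) + (-t_2) + \cdots + (-t_n) + 0 = (m-n) - \sum_{i=1}^n t_i$, which vanishes by the identity $\sum_{i=1}^n t_i = m-n$ recorded in Definition~\ref{defn.graph}. For $\bv_{\mathrm{in}}$, the companion identity $\sum_{i=2}^{n+1} u_i = m-n$ gives the same conclusion. Theorem~\ref{thm.gravitydiagrams} therefore applies to both vectors and converts the two Kostant values of Corollary~\ref{cor.main} into $|\outgrav_G(\bv_{\mathrm{out}})|$ and $|\ingrav_G(\bv_{\mathrm{in}})|$, yielding the claimed double equality.

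A minor sanity check, not really needed for the proof but useful to make the exposition self-contained, is to confirm that the two descriptions of $\bv_{\mathrm{out}}$ (and of $\bv_{\mathrm{in}}$) in the preceding definition match: starting from $(t_2+\cdots+t_n)\alpha_1 + (t_3+\cdots+t_n)\alpha_2 + \cdots + t_n\alpha_{n-1}$ and substituting $\alpha_i = \varepsilon_i - \varepsilon_{i+1}$, the coefficient of $\varepsilon_j$ telescopes to $c_j - c_{j-1}$ with $c_j = t_{j+1}+\cdots+t_n$ (and $c_0 = c_n = 0$), reproducing $(m-n-t_1,-t_2,\ldots,-t_n,0)$; the analogous telescoping handles $\bv_{\mathrm{in}}$. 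There is no genuine obstacle, since the corollary is a clean composition of two results that have already absorbed all the content.
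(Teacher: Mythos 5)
Your proposal is correct and follows exactly the route the paper intends: the corollary is stated as the direct composition of Corollary~\ref{cor.main} with Theorem~\ref{thm.gravitydiagrams}, and your verification of the sum-to-zero hypothesis and the matching of the two descriptions of $\bv_{\mathrm{out}}$ and $\bv_{\mathrm{in}}$ simply makes explicit what the paper leaves implicit.
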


In the next sections, we describe a canonical way to define out-degree and in-degree gravity diagrams for the $k$-caracol family of graphs.  We note that some of our conventions differ from the ones originally chosen for the (classical) caracol graph $\Car_{n+1} = \car{n+1}{1}$ in~\cite{BGHHKMY}.

\subsection{Gravity diagrams for the $k$-caracol graphs} \label{sec.gravity}
The $k$-caracol graph $\car{n+1}{k}$ has $n+1$ vertices and $m=(k+1)(n-k)+n-2$ edges. Its shifted out-degree vector $\bt$ and shifted in-degree vector $\bu$ are
\begin{align*}
\bt 
&= (\underbrace{n-k, \ldots, n-k}_{k-1}, n-k-1, \underbrace{1,\ldots, 1}_{n-k-1},0 ) \qquad\hbox{ and} 
\\
\bu 
&= (\underbrace{0, \ldots, 0}_{k-1}, k-1, \underbrace{k,\ldots, k}_{n-k-1}, n-k-1 ),
\end{align*}
and their coordinates sum to $m-n=(k+1)(n-k)-2$.  We also have
\begin{align*}
\bv_{\mathrm{out}} 
&= \sum_{j=1}^{k-1} ((k+1-j)(n-k)-2)\alpha_j + \sum_{j=k}^{n-2} (n-j-1)\alpha_j
\qquad\hbox{ and} \\
\bv_{\mathrm{in}} 
&= \sum_{j=k+1}^{n} ((j-k)k-1)\alpha_{j}.
\end{align*}

The in-degree gravity diagrams for $\bv_{\mathrm{in}}$ are defined on a triangular array of $(j-k)k-1$ dots in the $j$-th column for $j=k+1,\ldots,n$.  Since the dots lie in columns indexed by $\alpha_j$ only for $j=k+1,\ldots, n$, then for the purposes of defining the set of in-degree gravity diagrams, we only need to consider the positive roots which correspond to the edges in the graph $G=\car{n+1}{k}$ when restricted to the vertex set $\{k+1,k+2,\ldots,n+1\}$.  These positive roots are
$$\Phi_{G|_{V=\{k+1,\ldots, n+1\}}}^+ = \{ \alpha_j\}_{j=k+1}^n \cup \{\alpha_j + \cdots +\alpha_n \}_{j=k+1}^{n-1}, $$ 
so we see that each nontrivial line segment must end in the $n$-th column.
This leads us to choose the following conventions for the in-degree gravity diagrams:
\begin{enumerate}
\item[(a)] each line segment must be horizontal,
\item[(b)] a longer line segment must be in a row above that of a shorter line segment.
\end{enumerate}
This uniquely defines a representative for each equivalence class of in-degree line-dot diagrams for $\car{n+1}{k}$.
See Figures~\ref{fig.car61} and~\ref{fig.car62} for some examples.

The out-degree gravity diagrams for $\bv_{\mathrm{out}}$ are defined on the array consisting of $(k+1-j)(n-k)-2$ dots in the $j$-th column for $j=1,\ldots, k-1$, and $n-j-1$ dots in the $j$-th column for $j=k,\ldots, n-2$, where the latter portion forms a right-triangular array. 
Given this, we only need to consider the positive roots which correspond to the edges in the graph $G=\car{n+1}{k}$ when restricted to the vertex set $\{1,2,\ldots, n-1\}$.

From this, we see that every nontrivial line segment for the out-degree gravity diagram begins an $i$-th column for some $i=1,\ldots, k$, and ends in a $j$-th column for some $j = k, \ldots, n-2$. In other words, every nontrivial line segment contains a dot from the $k$-th column of the array, and this leads us to choose the following conventions for the out-degree gravity diagrams: 
\begin{enumerate}
\item[(a)] each line segment must be horizontal,
\item[(b)] the line segments are ordered from top to bottom so that the line segments which end at the $q$-th column are above the line segments which end at the $p$-th column if $q>p$, and if two line segments end at the same column, then the longer line segment is above the shorter line segment.
\end{enumerate}
This uniquely defines a representative for each equivalence class of out-degree line-dot diagrams for $\car{n+1}{k}$.
See Figures~\ref{fig.car61} and~\ref{fig.car62} for some examples.

\begin{figure}[ht!]
\begin{center}
\begin{tikzpicture}[scale=0.5]
\tikzstyle{every node}+=[fill, circle, inner sep=0, minimum size=4.5pt]
\begin{scope}[xshift=0]
\vertex[fill, label=above:\tiny$\alpha_1$](a11) at (0,0) {}; 
\vertex[fill, label=above:\tiny$\alpha_2$](a12) at (1,0) {}; 
\vertex[fill, label=above:\tiny$\alpha_3$](a13) at (2,0) {}; 
\node(a21) at (0,-1) {};
\node(a22) at (1,-1) {};
\node(a31) at (0,-2) {};
\end{scope}

\begin{scope}[xshift=100]
\vertex[fill, label=above:\tiny$\tiny\alpha_1$](a11) at (0,0) {}; 
\vertex[fill, label=above:\tiny$\alpha_2$](a12) at (1,0) {}; 
\vertex[fill, label=above:\tiny$\alpha_3$](a13) at (2,0) {}; 
\node(a21) at (0,-1) {};
\node(a22) at (1,-1) {};
\node(a31) at (0,-2) {};
\draw (a11)--(a12);
\end{scope}

\begin{scope}[xshift=200]
\vertex[fill, label=above:\tiny$\alpha_1$](a11) at (0,0) {}; 
\vertex[fill, label=above:\tiny$\alpha_2$](a12) at (1,0) {}; 
\vertex[fill, label=above:\tiny$\alpha_3$](a13) at (2,0) {}; 
\node(a21) at (0,-1) {};
\node(a22) at (1,-1) {};
\node(a31) at (0,-2) {};
\draw (a11)--(a13);
\end{scope}

\begin{scope}[xshift=300]
\vertex[fill, label=above:\tiny$\alpha_1$](a11) at (0,0) {}; 
\vertex[fill, label=above:\tiny$\alpha_2$](a12) at (1,0) {}; 
\vertex[fill, label=above:\tiny$\alpha_3$](a13) at (2,0) {}; 
\node(a21) at (0,-1) {};
\node(a22) at (1,-1) {};
\node(a31) at (0,-2) {};
\draw (a11)--(a12);
\draw (a21)--(a22);
\end{scope}

\begin{scope}[xshift=400]
\vertex[fill, label=above:\tiny$\alpha_1$](a11) at (0,0) {}; 
\vertex[fill, label=above:\tiny$\alpha_2$](a12) at (1,0) {}; 
\vertex[fill, label=above:\tiny$\alpha_3$](a13) at (2,0) {}; 
\node(a21) at (0,-1) {};
\node(a22) at (1,-1) {};
\node(a31) at (0,-2) {};
\draw (a11)--(a13);
\draw (a21)--(a22);
\end{scope}
%
%
\begin{scope}[xshift=0, yshift=-120]
\vertex[fill, label=above:\tiny$\alpha_3$](a11) at (0,0) {}; 
\vertex[fill, label=above:\tiny$\alpha_4$](a12) at (1,0) {}; 
\vertex[fill, label=above:\tiny$\alpha_5$](a13) at (2,0) {}; 
\node(a22) at (1,-1) {};
\node(a23) at (2,-1) {};
\node(a33) at (2,-2) {};
\end{scope}

\begin{scope}[xshift=100, yshift=-120]
\vertex[fill, label=above:\tiny$\alpha_3$](a11) at (0,0) {}; 
\vertex[fill, label=above:\tiny$\alpha_4$](a12) at (1,0) {}; 
\vertex[fill, label=above:\tiny$\alpha_5$](a13) at (2,0) {}; 
\node(a22) at (1,-1) {};
\node(a23) at (2,-1) {};
\node(a33) at (2,-2) {};
\draw (a12)--(a13);
\end{scope}

\begin{scope}[xshift=200, yshift=-120]
\vertex[fill, label=above:\tiny$\alpha_3$](a11) at (0,0) {}; 
\vertex[fill, label=above:\tiny$\alpha_4$](a12) at (1,0) {}; 
\vertex[fill, label=above:\tiny$\alpha_5$](a13) at (2,0) {}; 
\node(a22) at (1,-1) {};
\node(a23) at (2,-1) {};
\node(a33) at (2,-2) {};
\draw (a12)--(a13);
\draw (a22)--(a23);
\end{scope}

\begin{scope}[xshift=300, yshift=-120]
\vertex[fill, label=above:\tiny$\alpha_3$](a11) at (0,0) {}; 
\vertex[fill, label=above:\tiny$\alpha_4$](a12) at (1,0) {}; 
\vertex[fill, label=above:\tiny$\alpha_5$](a13) at (2,0) {}; 
\node(a22) at (1,-1) {};
\node(a23) at (2,-1) {};
\node(a33) at (2,-2) {};
\draw (a11)--(a13);
\end{scope}

\begin{scope}[xshift=400, yshift=-120]
\vertex[fill, label=above:\tiny$\alpha_3$](a11) at (0,0) {}; 
\vertex[fill, label=above:\tiny$\alpha_4$](a12) at (1,0) {}; 
\vertex[fill, label=above:\tiny$\alpha_5$](a13) at (2,0) {}; 
\node(a22) at (1,-1) {};
\node(a23) at (2,-1) {};
\node(a33) at (2,-2) {};
\draw (a11)--(a13);
\draw (a22)--(a23);
\end{scope}
\end{tikzpicture}
\end{center}
\caption{The out-degree and in-degree gravity diagrams for $\car61$.}
\label{fig.car61}
\end{figure}

\begin{figure}[ht!]
\begin{center}
\begin{tikzpicture}[scale=0.5]
\tikzstyle{every node}+=[fill, circle, inner sep=0, minimum size=4.5pt]
\begin{scope}[scale=1]
	\vertex[fill, label=above:\tiny$\alpha_1$](a00) at (0,0) {}; 	
	\vertex[fill, label=above:\tiny$\alpha_2$](a10) at (1,0) {}; 
	\vertex[fill, label=above:\tiny$\alpha_3$](a20) at (2,0) {};	
	\vertex(a01) at (0,-1) {}; \vertex(a02) at (0,-2) {}; 	
	\vertex(a03) at (0,-3) {}; \vertex(a11) at (1,-1) {};	
\end{scope}
\begin{scope}[scale=1, xshift=100]
	\vertex[fill, label=above:\tiny$\alpha_1$](a00) at (0,0) {}; 	
	\vertex[fill, label=above:\tiny$\alpha_2$](a10) at (1,0) {}; 
	\vertex[fill, label=above:\tiny$\alpha_3$](a20) at (2,0) {};	
	\vertex(a01) at (0,-1) {}; \vertex(a02) at (0,-2) {}; 	
	\vertex(a03) at (0,-3) {}; \vertex(a11) at (1,-1) {};	
	\draw (a00) to (a10);
\end{scope}
\begin{scope}[scale=1, xshift=200]
	\vertex[fill, label=above:\tiny$\alpha_1$](a00) at (0,0) {}; 	
	\vertex[fill, label=above:\tiny$\alpha_2$](a10) at (1,0) {}; 
	\vertex[fill, label=above:\tiny$\alpha_3$](a20) at (2,0) {};	
	\vertex(a01) at (0,-1) {}; \vertex(a02) at (0,-2) {}; 	
	\vertex(a03) at (0,-3) {}; \vertex(a11) at (1,-1) {};	
	\draw (a10) to (a20);
\end{scope}
\begin{scope}[scale=1, xshift=300]
	\vertex[fill, label=above:\tiny$\alpha_1$](a00) at (0,0) {}; 	
	\vertex[fill, label=above:\tiny$\alpha_2$](a10) at (1,0) {}; 
	\vertex[fill, label=above:\tiny$\alpha_3$](a20) at (2,0) {};	
	\vertex(a01) at (0,-1) {}; \vertex(a02) at (0,-2) {}; 	
	\vertex(a03) at (0,-3) {}; \vertex(a11) at (1,-1) {};	
	\draw (a00) to (a10);
	\draw (a01) to (a11); 
\end{scope}
\begin{scope}[scale=1, xshift=400]
	\vertex[fill, label=above:\tiny$\alpha_1$](a00) at (0,0) {}; 	
	\vertex[fill, label=above:\tiny$\alpha_2$](a10) at (1,0) {}; 
	\vertex[fill, label=above:\tiny$\alpha_3$](a20) at (2,0) {};	
	\vertex(a01) at (0,-1) {}; \vertex(a02) at (0,-2) {}; 	
	\vertex(a03) at (0,-3) {}; \vertex(a11) at (1,-1) {};	
	\draw(a01) to (a11);
	\draw(a10) to (a20); 
\end{scope}
\begin{scope}[scale=1, xshift=500]
	\vertex[fill, label=above:\tiny$\alpha_1$](a00) at (0,0) {}; 	
	\vertex[fill, label=above:\tiny$\alpha_2$](a10) at (1,0) {}; 
	\vertex[fill, label=above:\tiny$\alpha_3$](a20) at (2,0) {};	
	\vertex(a01) at (0,-1) {}; \vertex(a02) at (0,-2) {}; 	
	\vertex(a03) at (0,-3) {}; \vertex(a11) at (1,-1) {};	
	\draw (a00) to (a20);
\end{scope}
\begin{scope}[scale=1, xshift=600]
	\vertex[fill, label=above:\tiny$\alpha_1$](a00) at (0,0) {}; 	
	\vertex[fill, label=above:\tiny$\alpha_2$](a10) at (1,0) {}; 
	\vertex[fill, label=above:\tiny$\alpha_3$](a20) at (2,0) {};	
	\vertex(a01) at (0,-1) {}; \vertex(a02) at (0,-2) {}; 	
	\vertex(a03) at (0,-3) {}; \vertex(a11) at (1,-1) {};	
	\draw (a00) to (a20);
	\draw (a01) to (a11); 
\end{scope}

\begin{scope}[scale=1, yshift=-150]
	\vertex[fill, label=above:\tiny$\alpha_3$] at (0,0) {}; 	
	\vertex[fill, label=above:\tiny$\alpha_4$] at (1,0) {}; 
		\vertex at (1,-1) {}; \vertex at (1,-2) {};
	\vertex[fill, label=above:\tiny$\alpha_5$] at (2,0) {}; 
		\vertex at (2,-1) {}; \vertex at (2,-2) {};
		\vertex at (2,-3) {}; \vertex at (2,-4) {};
\end{scope}
\begin{scope}[scale=1, xshift=100, yshift=-150]
	\vertex[fill, label=above:\tiny$\alpha_3$] at (0,0) {}; 	
	\vertex[fill, label=above:\tiny$\alpha_4$] at (1,0) {}; 
		\vertex at (1,-1) {}; \vertex at (1,-2) {};
	\vertex[fill, label=above:\tiny$\alpha_5$] at (2,0) {}; 
		\vertex at (2,-1) {}; \vertex at (2,-2) {};
		\vertex at (2,-3) {}; \vertex at (2,-4) {}; 
	\draw (2,0)--(1,0); 
\end{scope}
\begin{scope}[scale=1, xshift=200, yshift=-150]
	\vertex[fill, label=above:\tiny$\alpha_3$] at (0,0) {}; 	
	\vertex[fill, label=above:\tiny$\alpha_4$] at (1,0) {}; 
		\vertex at (1,-1) {}; \vertex at (1,-2) {};
	\vertex[fill, label=above:\tiny$\alpha_5$] at (2,0) {}; 
		\vertex at (2,-1) {}; \vertex at (2,-2) {};
		\vertex at (2,-3) {}; \vertex at (2,-4) {}; 
	\draw (2,0)--(1,0); \draw (2,-1)--(1,-1);
\end{scope}
\begin{scope}[scale=1, xshift=300, yshift=-150]
	\vertex[fill, label=above:\tiny$\alpha_3$] at (0,0) {}; 	
	\vertex[fill, label=above:\tiny$\alpha_4$] at (1,0) {}; 
		\vertex at (1,-1) {}; \vertex at (1,-2) {};
	\vertex[fill, label=above:\tiny$\alpha_5$] at (2,0) {}; 
		\vertex at (2,-1) {}; \vertex at (2,-2) {};
		\vertex at (2,-3) {}; \vertex at (2,-4) {}; 
	\draw (2,0)--(0,0);		
\end{scope}
\begin{scope}[scale=1, xshift=400, yshift=-150]
	\vertex[fill, label=above:\tiny$\alpha_3$] at (0,0) {}; 	
	\vertex[fill, label=above:\tiny$\alpha_4$] at (1,0) {}; 
		\vertex at (1,-1) {}; \vertex at (1,-2) {};
	\vertex[fill, label=above:\tiny$\alpha_5$] at (2,0) {}; 
		\vertex at (2,-1) {}; \vertex at (2,-2) {};
		\vertex at (2,-3) {}; \vertex at (2,-4) {}; 
	\draw (2,0)--(0,0); \draw (2,-1)--(1,-1);		
\end{scope}
\begin{scope}[scale=1, xshift=500, yshift=-150]
	\vertex[fill, label=above:\tiny$\alpha_3$] at (0,0) {}; 	
	\vertex[fill, label=above:\tiny$\alpha_4$] at (1,0) {}; 
		\vertex at (1,-1) {}; \vertex at (1,-2) {};
	\vertex[fill, label=above:\tiny$\alpha_5$] at (2,0) {}; 
		\vertex at (2,-1) {}; \vertex at (2,-2) {};
		\vertex at (2,-3) {}; \vertex at (2,-4) {}; 
	\draw (2,0)--(1,0); \draw (2,-1)--(1,-1); \draw (2,-2)--(1,-2);
\end{scope}
\begin{scope}[scale=1, xshift=600, yshift=-150]
	\vertex[fill, label=above:\tiny$\alpha_3$] at (0,0) {}; 	
	\vertex[fill, label=above:\tiny$\alpha_4$] at (1,0) {}; 
		\vertex at (1,-1) {}; \vertex at (1,-2) {};
	\vertex[fill, label=above:\tiny$\alpha_5$] at (2,0) {}; 
		\vertex at (2,-1) {}; \vertex at (2,-2) {};
		\vertex at (2,-3) {}; \vertex at (2,-4) {}; 
	\draw (2,0)--(0,0);
	\draw (2,-1)--(1,-1); \draw (2,-2)--(1,-2);
\end{scope}

\end{tikzpicture}
\end{center}
\caption{The out-degree and in-degree gravity diagrams for $\car62$.}
\label{fig.car62}
\end{figure}

\begin{remark}\label{rem.trapezoid}
An out-degree gravity diagram of $\car{n+1}{k}$ is defined on an array consisting of $(k+1-j)(n-k)-2$ dots in the $j$-th column for $j=1,\ldots, k-1$, and $n-j-1$ dots in the $j$-th column for $j=k,\ldots, n-2$.  However, note that we can truncate the dots in the first $k-1$ columns of the out-degree gravity diagram below the first $n-k-1$ rows of dots without loss of generality since every nontrivial line segment must contain a dot from the column indexed by $\alpha_k$, and so no line segments can be drawn on those dots below the first $n-k-1$ rows.  In other words, we view the out-degree gravity diagrams of $\car{n+1}{k}$ as a trapezoidal array of $k-1+i$ dots in the $i$-th row for $i=1,\ldots, n-k-1$. See the left side of Figure~\ref{fig.3-car_bijection_out_d} for an example.
\end{remark}

\subsection{Fuss-Catalan volumes}

In the paper~\cite{BGHHKMY}, we computed the volume of the flow polytope of 
the caracol graph $\car{n+1}{1}$ with unit flow $\ba=(1,0,\ldots, 0, -1)$ by describing a bijection between its out-degree gravity diagrams and a set of Dyck paths.
We now generalize this method and compute the volume of the flow polytope of the $k$-caracol graph $\car{n+1}{k}$ with unit flow $\ba=(1,0,\ldots, 0, -1)$ in two ways.  The first is a bijection between in-degree gravity diagrams and a set of rational Catalan Dyck paths, and the second is a bijection between the out-degree gravity diagrams and the same set of rational Catalan Dyck paths.

Before we do this, we introduce some basic background on rational Catalan combinatorics, which is a generalization due to Armstrong, Loehr, and Warrington~\cite{ALW} of the classical Catalan numbers.  

\begin{defn} Let $a,b$ be nonnegative integers such that $b\geq a$.
A {\em lattice path} from $(0,0)$ to $(b,a)$ is a path comprised of $a$ north steps $N=(0,1)$ and $b$ east steps $E=(1,0)$. We may equivalently represent the lattice path as a word $N^{s_1}EN^{s_2}E \cdots N^{s_b}E$, so that $\bs = (s_1,\ldots, s_b)\in \bbZ_{\geq0}$ is a weak composition of $|\bs| = s_1+\cdots + s_b=a$ of length $\ell(\bs) = b$.  In this paper, we will often view lattice paths as weak compositions.
\end{defn}

\begin{defn}
Given two weak compositions $\bs=(s_1,\ldots, s_b)\vDash a$ and $\bt = (t_1,\ldots, t_b) \vDash a$, we say that $\bs$ {\em dominates} $\bt$ and we write $\bs \rhd \bt$
if $s_1+ \cdots + s_j \geq t_1 + \cdots + t_j$ for each $j=1,\ldots, b$.
A {\em $\bt$-Dyck path} is a weak composition $\bs = (s_1,\ldots, s_b)$ that dominates $\bt$.
\end{defn}
Visually, $\bs = (s_1,\ldots, s_b)\vDash a$ is the lattice path $N^{s_1}E \cdots N^{s_b}E$ on the rectangular grid from $(0,0)$ to $(b, a)$.
On this grid, the composition $\bt = (t_1,\ldots, t_b)\vDash a$ is represented by shading $t_j$ squares in the $j$-th column of squares, starting at height $t_1+\cdots+t_{j-1}$. The set of $\bt$-Dyck paths is then the set of lattice paths from $(0,0)$ to $(b, a)$ which lie above the shaded $\bt$-region. 
The {\em area} of a $\bt$-Dyck path is the number of squares lying between the path and the shaded $\bt$-region.

\begin{defn}
For coprime positive integers $b>a$, a {\em rational $(a,b)$-Dyck path} 
is a lattice path from $(0,0)$ to $(b,a)$ in the integer lattice $\bbZ^2$ comprised of north steps $N=(0,1)$ and east steps $E=(1,0)$ that stays above the diagonal line from $(0,0)$ to $(b,a)$. Let $\calD(a,b)$ denote the set of rational $(a,b)$-Dyck paths.
\end{defn}
\begin{remark} \label{rem.rationalDyck}
Rational $(a,b)$-Dyck paths are a special case of $\bt$-Dyck paths. By shading the squares on the $b$ by $a$ grid which intersect the line $y=\frac{a}{b}x$, we obtain the (row) signature $(r_1,\ldots, r_a)$ of the path, where $r_i$ is the number of shaded squares in the $i$-th row. The associated weak composition $\bt$ is then the transpose of $(r_1-1,\ldots, r_{a-1}-1, r_a)$.  In the proof of Theorem~\ref{thm.onezerozero}, we will use the fact that for $a=n-k$ and $b=k(n-k)-1$, rational $(a,b)$-Dyck paths are $\bt$-Dyck paths where $\bt$ is the transpose of $(k-1,k^{n-k-1})$.
\end{remark}

\begin{defn} For coprime positive integers $b>a$, the {\em rational Catalan number}
\begin{equation}
\Cat(a,b)=\frac{1}{a+b}{a+b\choose a} = \frac{1}{b}{a+b-1\choose a} = \frac{1}{a}{a+b-1\choose b}
\end{equation}
enumerates rational $(a,b)$-Dyck paths~\cite[Section 3.2]{ALW}.
\end{defn}
\begin{remark} Two well-known special cases of the rational Catalan numbers are the classical Catalan numbers
$$\Cat(n) = \Cat(n,n+1) = \frac{1}{2n+1}{2n+1\choose n} 
	= \frac{1}{n+1}{2n\choose n}
	= \frac{1}{n}{2n\choose n+1},$$
and the classical Fuss-Catalan numbers
$$\Cat(n,kn+1) = \frac{1}{(k+1)n+1}{(k+1)n+1\choose n} 
	= \frac{1}{kn+1}{(k+1)n\choose n}
	= \frac{1}{n}{(k+1)n\choose kn+1},$$
	for $k\in\bbN$.
\end{remark}

It turns out that the volume of the flow polytope of the $k$-caracol graph with unit flow $\ba=(1,0,\ldots,0,-1)$ is a generalized Fuss-Catalan number.
\begin{theorem} \label{thm.onezerozero}
For $k\in \bbN$ and $n>k$,
$$\vol \calF_{\car{n+1}{k}}(1,0,\ldots,0,-1) 
=\Cat(n-k,k(n-k)-1).$$
\end{theorem}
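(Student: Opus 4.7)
The plan is to invoke Corollary~\ref{cor.gravitydiagrams} and then give a bijection between the gravity diagrams of $\car{n+1}{k}$ and the set $\calD(n-k,k(n-k)-1)$ of rational Dyck paths. I would carry this out first on the in-degree side, where the combinatorics is more constrained, and then sketch a parallel bijection on the out-degree side to match the statement that the theorem is to be proved in two ways.

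For the in-degree gravity diagrams, the key observation is that when $\car{n+1}{k}$ is restricted to the vertex set $\{k+1,\ldots,n+1\}$, the available positive roots are only the simple roots $\alpha_j$ for $k+1\leq j\leq n$ and the long roots $\alpha_j+\cdots+\alpha_n$ for $k+1\leq j\leq n-1$. Hence every nontrivial line segment reaches the rightmost column $n$, and the canonical representative singled out by conventions~(a)--(b) is uniquely determined by the multiplicities $(d_{k+1},\ldots,d_{n-1})\in\bbZ_{\geq0}^{n-k-1}$ of the long roots (stack the horizontal segments from top to bottom by decreasing length, with trivial dots filling the bottom of each column). Requiring that the trivial-dot counts $c_j=(j-k)k-1-\sum_{i=k+1}^j d_i$ be non-negative translates to the inequalities $\sum_{i=k+1}^j d_i\leq(j-k)k-1$ for $j=k+1,\ldots,n-1$.

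I would then reindex $e_r:=d_{k+r}$ for $1\leq r\leq n-k-1$ and introduce the slack variable $e_{n-k}:=k(n-k)-1-\sum_{r=1}^{n-k-1}e_r$, which is non-negative by the bound at $j=n-1$. The resulting tuples $(e_1,\ldots,e_{n-k})\in\bbZ_{\geq0}^{n-k}$ satisfy $\sum_r e_r=k(n-k)-1$ together with $\sum_{r'\leq r}e_{r'}\leq rk-1$ for $1\leq r\leq n-k-1$. Identifying such a tuple with the lattice path $NE^{e_1}NE^{e_2}\cdots NE^{e_{n-k}}$ from $(0,0)$ to $(k(n-k)-1,n-k)$, and using $\lfloor(ka-1)r/a\rfloor=kr-1$ for $1\leq r\leq a-1$ (with $a=n-k$), shows the path lies weakly above the diagonal. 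This agrees with the signature description in Remark~\ref{rem.rationalDyck} where $\bt$ is the transpose of $(k-1,k^{n-k-1})$, so that $|\ingrav_{\car{n+1}{k}}(\vin)|=|\calD(n-k,k(n-k)-1)|=\Cat(n-k,k(n-k)-1)$, as desired.

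For the alternative proof via out-degree gravity diagrams, I would work in the trapezoidal array of Remark~\ref{rem.trapezoid}: since every nontrivial segment passes through the pivot column $k$, each row contains at most one nontrivial segment, describable by its left and right endpoints (or a null value when the row is entirely trivial dots). The ordering of rows by ending column and then by length forces a unique canonical diagram per class, and I would encode the row data into the same tuple $(e_1,\ldots,e_{n-k})$ as above to reconcile the two bijections. The main obstacle is precisely this second encoding: on the out-degree side, nontrivial segments may begin at any of the $k$ leftmost columns rather than all sharing a common endpoint, so more bookkeeping is needed to extract the Dyck statistic, whereas the in-degree bijection is essentially forced by the observation that every nontrivial segment ends at column $n$.
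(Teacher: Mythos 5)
Your proposal is correct and follows essentially the same route as the paper: apply Corollary~\ref{cor.gravitydiagrams} and biject the in-degree gravity diagrams of $\car{n+1}{k}$ with rational $(n-k,k(n-k)-1)$-Dyck paths, using the fact that every nontrivial segment must end in column $n$ (your multiplicity/slack-variable encoding is just an algebraic rephrasing of the paper's geometric embedding $\Psi_{\mathrm{in}}$). The out-degree sketch you flag as needing more bookkeeping is exactly what the paper works out separately in Lemmas~\ref{lem.lineproperties}--\ref{lem.rightendpoints} and Proposition~\ref{prop.outgrav}, but it is not needed for the theorem since the in-degree bijection already completes the proof.
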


\begin{proof} Let $a=n-k$ and $b=ka-1$.
We construct a bijection $\Psi_{\mathrm{in}}:\ingrav_G(\bv_{\mathrm{in}}) \rightarrow\calD(a,b)$ from the set of in-degree gravity diagrams of $\car{n+1}{k}$ to the set of rational $(a,b)$-Dyck paths. 

Recall from Section~\ref{sec.gravity} that an in-degree gravity diagram of $\car{n+1}{k}$ is defined on a triangular array of $(j-k)k-1$ dots in the $j$-column for $j=k+1,\ldots, n$.  Given an in-degree gravity diagram $\Gamma$, we embed 
it into the squares of the $\bbZ^2$ grid by rotating $\Gamma$ counterclockwise by ninety degrees, aligned so that the dots in the column indexed by $\alpha_n$ lie in the squares just above the line $y=a$.  See Figure~\ref{fig.3-car_bijection_in} for an illustration.

As noted in Remark~\ref{rem.rationalDyck}, the set of $(a,b)$-Dyck paths is the set of $\bt$-Dyck paths where $\bt$ is the transpose of $(k-1,k^{n-k-1})$.  With this interpretation, one can see that the columns of $\Gamma$ embed into the squares of $\bbZ^2$ precisely so that the lower boundary of $\Gamma$ consists of the shaded squares in the $(a,b)$-Dyck path diagram.

Line segments of the embedded $\Gamma$ extend along columns from the top row of the Dyck path diagram, and by the convention chosen for the in-degree gravity diagrams, the lengths of these columns are non-increasing from left to right.  Thus, the line segments of $\Gamma$ define a unique rational $(a,b)$-Dyck path associated to $\Gamma$.  Conversely, any $(a,b)$-Dyck path defines an in-degree gravity diagram for $\car{n+1}{k}$ whose line segments occupy every square on the northwest side of the Dyck path.

Therefore, $|\ingrav_{\car{n+1}{k}}(\bv_{\mathrm{in}})| = |\calD(a,b)| = \Cat(a,b)$, and 
we conclude by Corollary~\ref{cor.gravitydiagrams} that $\vol\calF_{\car{n+1}{k}}(1,0,\ldots,0,-1) = \Cat(n-k,k(n-k)-1)$.
\end{proof}

\begin{corollary} We recover the following formulas as special cases.
At $k=1$, 
$$\vol \calF_{\Car_{n+1}}(1,0,\ldots, 0,-1) = \Cat(n-1, n-2) = \frac{1}{n-1}{2n-4\choose n-2}$$
is a classical Catalan number.
At $k=n-1$,
$$\vol \calF_{\PS_n}(1,0,\ldots, 0,-1)
= \vol \calF_{\car{n+1}{n-1}}(1,0,\ldots, 0,-1) = \Cat(1, n-2) = 1.$$
\end{corollary}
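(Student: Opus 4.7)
The plan is to derive both identities by direct specialization of Theorem~\ref{thm.onezerozero}, followed by elementary simplification of the rational Catalan numbers using the closed forms supplied in their definition.

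Setting $k=1$ in Theorem~\ref{thm.onezerozero} gives $\vol \calF_{\Car_{n+1}}(1,0,\ldots,0,-1)=\Cat(n-1,n-2)$. Applying the identity $\Cat(a,b)=\tfrac{1}{a}\binom{a+b-1}{b}$ with $a=n-1$ and $b=n-2$ produces the stated form $\tfrac{1}{n-1}\binom{2n-4}{n-2}$, which is the classical Catalan number $\Cat(n-2)$. Similarly, setting $k=n-1$ yields $\vol \calF_{\car{n+1}{n-1}}(1,0,\ldots,0,-1)=\Cat(1,n-2)$, and the identity $\Cat(a,b)=\tfrac{1}{a+b}\binom{a+b}{a}$ with $a=1$ and $b=n-2$ evaluates to $\tfrac{1}{n-1}\binom{n-1}{1}=1$.

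The only step that requires more than a routine evaluation is the first equality in the $k=n-1$ line, namely $\vol\calF_{\PS_n}(1,0,\ldots,0,-1)=\vol\calF_{\car{n+1}{n-1}}(1,0,\ldots,0,-1)$. By Definition~\ref{defn.kcar}, the graph $\car{n+1}{n-1}$ is obtained from $\PS_n$ by adjoining the single edge $(n,n+1)$. At net flow $\ba=(1,0,\ldots,0,-1)$, flow conservation at vertex $n$ forces the added coordinate $f_{(n,n+1)}$ to equal the total inflow at $n$, so it is a fixed affine function of the remaining flow variables. Hence $\calF_{\car{n+1}{n-1}}(\ba)$ is affinely isomorphic to $\calF_{\PS_n}(\ba)$, and the two polytopes have equal normalized volume. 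This affine-isomorphism observation is the only nontrivial point in the argument; the rest is arithmetic.
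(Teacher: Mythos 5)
Your proposal is correct and follows essentially the same route as the paper: specialize Theorem~\ref{thm.onezerozero} at $k=1$ and $k=n-1$, simplify the rational Catalan numbers, and identify $\calF_{\PS_n}(1,0,\ldots,0,-1)$ with $\calF_{\car{n+1}{n-1}}(1,0,\ldots,0,-1)$ by observing that the flow on the adjoined edge $(n,n+1)$ is determined by the remaining coordinates, which is exactly the paper's remark that this edge does not affect the defining equations.
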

\begin{proof}
Earlier, we observed that when $k=n-1$, the graph $\car{n+1}{n-1}$ is the graph $\PS_n$ with an extra edge $(n,n+1)$. Note that this edge does not affect the equations defining the polytope, so
$\calF_{\PS_n}(a_1,\ldots,a_{n-1}, \hbox{$-\sum_{i=1}^{n-1} a_i$}) 
= \calF_{\car{n+1}{n-1}}(a_1,\ldots,a_{n-1}, a_n, \hbox{$-\sum_{i=1}^{n} a_i$}).$
\end{proof}

\begin{remark}
M\'esz\'aros~\cite{M} developed a method for expressing the volumes of flow polytopes with unit flow as the number of certain triangular arrays, and as an application, used it to construct a family of flow polytopes $\calF_G$ with Fuss-Catalan volume $\Cat(a, ka+1)$.
Using relationship between rational-Dyck paths and in-degree gravity diagrams as a guide, then the graph $H_{n+1}^{(k)}$, obtained by taking $G=\car{n+1}{k}$ and adding one more copy of the edge $(k, k+1)$, has shifted in-degree vector $\bu= (0, k^{n-k}, n-k)$, and $\bv_{\mathrm{in}}=\sum_{j=k+1}^n (j-k)k\alpha_j$. This means that an in-degree gravity diagram for $H_{n+1}^{(k)}$ can be embedded in the squares of a $k(n-k)$ by $n-k$ grid. We can, without loss of generality, extend this to a $b=k(n-k)+1$ by $a=n-k$ grid to ensure that $a$ and $b$ are coprime and the bijection between the rational $(a,b)$-Dyck paths and in-degree gravity diagrams will remain unchanged.  We hope to explore this variation of the $k$-caracol graphs in future work.
\end{remark}

By Corollary~\ref{cor.gravitydiagrams}, we can obtain a second proof of Theorem~\ref{thm.onezerozero} by constructing a bijection from the set of out-degree gravity diagrams to the same set of rational Dyck paths as above.

\begin{defn} We set some notation that will be used in the proof of the next result.
Recall from Remark~\ref{rem.trapezoid} that without loss of generality, we may consider the out-degree gravity diagrams for $\car{n+1}{k}$ to be defined on a trapezoidal array of $k-1+i$ dots in the $i$-th row for $i=1,\ldots,n-k-1$. Since the line segments of the gravity diagram are horizontal, we let $L_i=[\ell_i,r_i]$ denote the line segment in the $i$-th row, from the $\ell_i$-th column to the $r_i$-th column. The {\em length} of $L_i$ is $d(L_i) = r_i-\ell_i$. 
\end{defn}

For $k\in \bbN$ and $n>k$, let $a=n-k$ and $b=ka-1$.
We will define a map $\Psi_{\mathrm{out}}:\outgrav_G(\bv_{\mathrm{out}})\rightarrow \calD(a,b)$ (Definition~\ref{defn.psiout}) from the set of out-degree gravity diagrams for $G=\car{n+1}{k}$ to the set of rational $(a,b)$-Dyck paths in several steps.

Let $\Gamma\in \outgrav_G(\bv_{\mathrm{out}})$ be an out-degree gravity diagram with line segments $L_1,\ldots, L_{a-1}$.  
Again, we view an $(a,b)$-Dyck path as a $\bt$-Dyck path where $\bt$ is the transpose of $(k-1, k^{a-1})$.  Let $Z$ denote the $\bbZ^2$ grid from $(0,0)$ to $(b,a)$, with the $\bt$-region shaded (this is the lattice on which we can draw an $(a,b)$-Dyck path).  Note that $Z$ has exactly $a-1$ nonempty rows of squares lying above its shaded $\bt$-region, so we will show that we can embed the line segments of $\Gamma$ into the rows of squares of $Z$ appropriately, which in turn will define the $(a,b)$-Dyck path associated to $\Gamma$.

To begin with, we label the $j(k-1)$-th column of squares of $Z$ by $\alpha_{k+j}$, for $j=0,\ldots, a-2$. The zeroth column lies to the left of the diagram.  See the right side of Figure~\ref{fig.3-car_bijection_out_d} for an example.
We embed the line segment $L_i=[\ell_i,r_i]$ into the $(i+1)$-th row of squares of $Z$ by placing its left endpoint in the column indexed by $\alpha_{r_i}$.

To see that this procedure indeed embeds $L_i$ into the squares lying above the shaded $\bt$-region of $Z$, we first list a few properties which are satisfied by these embedded line segments.
\begin{lemma}\label{lem.lineproperties} 
Let $L_i=[\ell_i,r_i]$ be the line segment in the $i$-th row of an out-degree gravity diagram $\Gamma\in \outgrav_G(\bv_{\mathrm{out}})$. Let  $\lp(L_i)$, respectively $\rp(L_i)$,  denote the column of the Dyck path diagram $Z$ that is occupied by the left (respectively right) endpoint of the embedded line segment $L_i$. Then
\begin{enumerate}
\item[(a)] $1\leq \ell_i\leq k$ and $k\leq r_i\leq k+i-1$,
\item[(b)] $r_i-k \leq d(L_i) \leq r_i-1 \leq k+i-2$,
\item[(c)] $\lp(L_i) \in \{ k-1, 2(k-1), \ldots, (i-1)(k-1)\}$, and
\item[(d)] $\rp(L_i) \leq ik-1$.
\end{enumerate}
\end{lemma}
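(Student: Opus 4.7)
My plan is to recognize each $L_i$ as the unique line segment in row $i$ of the trapezoidal array that passes through column $k$, and then to deduce all four parts from this identification together with the geometry of the trapezoid and the embedding rule. The crucial preliminary observation is that column $k$ lies in every one of the $a-1 = n-k-1$ rows, and the single dot in (row $i$, column $k$) belongs to exactly one line segment. By Remark~\ref{rem.trapezoid}, every nontrivial segment passes through column $k$, so the $a-1$ through-$k$ segments must occupy distinct rows of the trapezoidal array. The horizontality and ordering conventions then uniquely determine which row each segment occupies, and the resulting list is $L_1, \ldots, L_{a-1}$.

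Given this identification, (a) and (b) follow from the trapezoid shape. Since $L_i$ passes through column $k$ we have $\ell_i \leq k \leq r_i$, and $\ell_i \geq 1$ is immediate. Since row $i$ contains only the columns $1, \ldots, k+i-1$, the segment $L_i$ cannot extend beyond column $k+i-1$, giving $r_i \leq k + i - 1$. Combining $1 \leq \ell_i \leq k$ and $r_i \leq k+i-1$ with $d(L_i) = r_i - \ell_i$ then yields (b).

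For (c) and (d), the embedding rule described just before the lemma places the left endpoint of $L_i$ in the column of $Z$ labeled by $\alpha_{r_i}$, which is column $(r_i - k)(k-1)$. Hence $\lp(L_i) = (r_i - k)(k-1)$, and since $r_i - k \in \{0, 1, \ldots, i-1\}$ by (a), the left-endpoint column takes values in $\{j(k-1) : 0 \leq j \leq i-1\}$, which is (c). For (d), setting $s = r_i - k$, the identity
$$\rp(L_i) = \lp(L_i) + d(L_i) = s(k-1) + (s + k - \ell_i) = (s+1)k - \ell_i$$
together with $s \leq i-1$ and $\ell_i \geq 1$ gives $\rp(L_i) \leq ik - 1$.

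The main obstacle will be the preliminary observation: verifying rigorously that the ordering convention together with the shape of the trapezoid force each of the $a-1$ through-$k$ line segments into a distinct row, and that this row-by-row assignment is exactly the labelling $L_1, \ldots, L_{a-1}$ named in the lemma. Once this is pinned down, all four parts are essentially bookkeeping about the trapezoid and the embedding formula.
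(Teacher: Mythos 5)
Your proposal is correct and follows essentially the same route as the paper: (a) and (b) come straight from the trapezoidal shape and the gravity-diagram conventions, (c) from the embedding rule $\lp(L_i)=(r_i-k)(k-1)$, and (d) by combining the two (your exact identity $\rp(L_i)=(s+1)k-\ell_i$ is a marginally cleaner way to get the same bound the paper obtains by adding the maxima of $\lp(L_i)$ and $d(L_i)$). The "row identification" you flag as the main obstacle is already built into the paper's setup (Remark~\ref{rem.trapezoid} and the definition of $L_i$ immediately preceding the lemma), so no extra work is needed there.
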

\begin{proof}
Parts (a) and (b) follow directly from the conventions for $\Gamma$ as a gravity diagram. Part (c) holds because $\lp(L_i)$ occupies the column labeled by $\alpha_{r_i}$, which is the $(r_i-k)(k-1)$-th column of $Z$, and by part (a), $k\leq r_i\leq k+i-1$. Finally, part (d) follows because by part (c), the rightmost column which can be occupied by $\lp(L_i)$ is $(i-1)(k-1)$, and by part (b), the maximum length of $L_i$ is $k+i-2$, so $\rp(L_i) \leq (i-1)(k-1) + k+i-2 = ik-1$.
\end{proof}

Note that there are precisely $ik-1$ squares lying in the $(i+1)$-th row of $Z$, above the shaded $\bt$-region, so by part (d) of the above Lemma, each line segment $L_i$ of $\Gamma$ is embedded into the squares lying above the shaded $\bt$-region of $Z$, as claimed.

\begin{lemma} \label{lem.rightendpoints}
Let $G=\car{n+1}{k}$, and let $\Gamma\in \outgrav_G(\bv_{\mathrm{out}})$ be an out-degree gravity diagram with line segments $L_1,\ldots, L_{a-1}$.  Then $\rp(L_1) \leq \cdots \leq \rp(L_{a-1})$.
\end{lemma}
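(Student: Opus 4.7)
The plan is to compute $\rp(L_{i+1}) - \rp(L_i)$ directly from the embedding formula and verify that the conventions on out-degree gravity diagrams force the difference to be nonnegative. From the definition of the embedding, the left endpoint of $L_i$ lies in the column of $Z$ labeled $\alpha_{r_i}$, which is column $(r_i - k)(k-1)$, and the segment has length $d(L_i) = r_i - \ell_i$. Hence
$$\rp(L_i) = (r_i - k)(k-1) + (r_i - \ell_i).$$
With this closed form, comparing consecutive rows reduces to a one-line algebraic manipulation.

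Next, I would translate the ordering conventions for out-degree gravity diagrams from Section~\ref{sec.gravity} into two combinatorial statements about consecutive rows: (i) the $r$-values are weakly increasing, so $r_i \leq r_{i+1}$; and (ii) if $r_i = r_{i+1}$, then the longer segment sits in the higher row, so $\ell_{i+1} \leq \ell_i$. I would then split into cases. When $r_i = r_{i+1}$, both segments share the same left endpoint in $Z$, and (ii) yields $d(L_{i+1}) \geq d(L_i)$, so $\rp(L_{i+1}) \geq \rp(L_i)$ is immediate. When $s := r_{i+1} - r_i \geq 1$, expanding gives
$$\rp(L_{i+1}) - \rp(L_i) = s(k-1) + s + (\ell_i - \ell_{i+1}) = sk + (\ell_i - \ell_{i+1}),$$
and Lemma~\ref{lem.lineproperties}(a) bounds $1 \leq \ell_i, \ell_{i+1} \leq k$, so $\ell_i - \ell_{i+1} \geq 1 - k$ and the difference is at least $(s-1)k + 1 \geq 1$.

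The only nontrivial step is the $s \geq 1$ case, and the key numerical observation is that advancing $r$ by one shifts $\lp$ by $k-1$ columns in $Z$ while also contributing one extra unit of available length, for a net gain of $k$ per unit of $r$-increase. This exactly dominates the worst-case reshuffling of left endpoints in $\{1,\ldots,k\}$, which can cost at most $k-1$. In other words, the lemma is forced by the choice of column spacing $(k-1)$ in the embedding, and there is no genuine obstacle; the entire argument is a direct computation plus a case split on whether the equality $r_i = r_{i+1}$ holds.
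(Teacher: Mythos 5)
Your proof is correct and follows essentially the same route as the paper's: the same case split (equal versus strictly larger right-end column $r$) and the same key estimate that an increase of $r$ by $s$ shifts $\rp$ by $sk$, which dominates the at most $k-1$ lost to reshuffling left endpoints in $\{1,\ldots,k\}$. The only cosmetic difference is that you compare consecutive pairs directly via the closed form $\rp(L_i)=(r_i-k)(k-1)+(r_i-\ell_i)$, whereas the paper wraps the identical comparison of $L_{a-2}$ and $L_{a-1}$ in an induction on $a$.
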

\begin{proof}
We proceed by induction on $a$.  
The base cases are for $k\geq1$ and $a=n-k=1$.  In these cases, the only out-degree gravity diagram is the empty diagram, and the only $(1,b)$-Dyck path is $NE^b$, so the base cases hold.
 
Now given $k\geq1$, suppose $\Gamma$ has $a-1$ rows with line segments $L_1,\ldots, L_{a-1}$.  By the induction hypothesis, the line segments $L_1,\ldots, L_{a-2}$ of $\Gamma$ embeds into rows $2$ through $a-1$ of the Dyck path grid $Z$, and the shape of these embedded line segments defines a (partial) rational $(a-1,b-k)$-Dyck path from $(0,0)$ to $(c,a-1)$ for some $0\leq c \leq b-k$.  We now consider embedding the last line segment $L_{a-1}$.

If $L_{a-1}$ and $L_{a-2}$ are embedded into $Z$ so that $\lp(L_{a-1}) = \lp(L_{a-2})$, then by the conventions defining the out-degree gravity diagrams, $d(L_{a-1}) \geq d(L_{a-2})$, and so $\rp(L_{a-1}) \geq \rp(L_{a-2})$.
Otherwise, by construction, $L_{a-1}$ must be embedded so that $\lp(L_{a-1})\geq \lp(L_{a-2})+k-1$.  In other words, if $r_{a-2} = k+h$ for some $h\geq0$, then $r_{a-1} \geq k+h+1$. By part (b) of the previous Lemma, we have $d(L_{a-2})\leq k+h-1$ and $h+1 \leq d(L_{a-1})$. Putting this altogether, 
\begin{align*}
\rp(L_{a-1})
= \lp(L_{a-1})+d(L_{a-1})
&\geq \lp(L_{a-2}) + k-1 + h+1\\
&> \lp(L_{a-2}) + d(L_{a-2})
= \rp(L_{a-2}),
\end{align*} 
so the right endpoint of $L_{a-1}$ lies (strictly) to the right of the right endpoint of $L_{a-2}$ in this case also.  
\end{proof}

\begin{defn} \label{defn.psiout}
Lemma~\ref{lem.rightendpoints} shows that the line segments $L_1,\ldots, L_{a-1}$ of a gravity diagram $\Gamma\in \outgrav_G(\bv_{\mathrm{out}})$ are embedded into the $(a,b)$-Dyck path grid $Z$ so that the right endpoints of the line segments move weakly to the right.  Therefore, we can define $\Psi_{\mathrm{out}}(\Gamma)$ to be the rational $(a,b)$-Dyck path defined by the `rectilinear convex hull' of the embedded line segments of $\Gamma$.

In other words, consider the region of squares that lie above the shaded $\bt$-region as the Ferrers diagram of the partition $\lambda(a,k) = ((a-1)k-1, (a-2)k-1, \ldots, 2k-1, k-1)$. 
Then the right endpoints of the embedded line segments coming from $\Gamma$  define a subpartition of $\lambda$, and this subpartition defines the $(a,b)$-Dyck path associated to $\Gamma$.
\end{defn}
See Example~\ref{eg.pairofDycks} and Figure~\ref{fig.3-car_bijection_out_d} for an illustration.

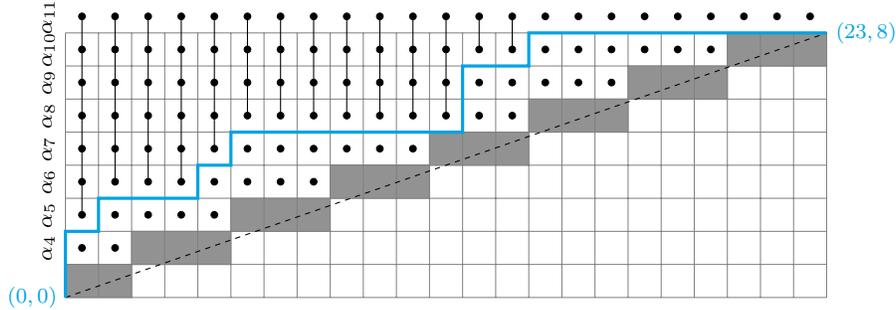
\begin{figure} [ht!]
\begin{center}
\begin{tikzpicture}[scale=0.5]

\begin{scope}[scale=0.88]
	\draw[fill, color=gray!85] (0,0) rectangle (2,1);
	\draw[fill, color=gray!85] (2,1) rectangle (5,2);
	\draw[fill, color=gray!85] (5,2) rectangle (8,3);
	\draw[fill, color=gray!85] (8,3) rectangle (11,4);
	\draw[fill, color=gray!85] (11,4) rectangle (14,5);
	\draw[fill, color=gray!85] (14,5) rectangle (17,6);
	\draw[fill, color=gray!85] (17,6) rectangle (20,7);
	\draw[fill, color=gray!85] (20,7) rectangle (23,8);
	\draw[very thin, color=gray!100] (0,0) grid (23,8);
	
	\foreach \x in {0.5,1.5}
	\foreach \y in {1.5,2.5,3.5,4.5,5.5,6.5,7.5,8.5} 
		\vertex[fill, minimum size=2.5pt] at (\x,\y) {};
	
	\foreach \x in {2.5,3.5,4.5}
		\foreach \y in {2.5,3.5,4.5,5.5,6.5,7.5,8.5} 
		\vertex[fill, minimum size=2.5pt] at (\x,\y) {};
	
	\foreach \x in {5.5,6.5,7.5}
		\foreach \y in {3.5,4.5,5.5,6.5,7.5,8.5} 
		\vertex[fill, minimum size=2.5pt] at (\x,\y) {};
	
	\foreach \x in {8.5,9.5,10.5}
		\foreach \y in {4.5,5.5,6.5,7.5,8.5} 
		\vertex[fill, minimum size=2.5pt] at (\x,\y) {};
	
	\foreach \x in {11.5,12.5,13.5}
		\foreach \y in {5.5,6.5,7.5,8.5} 
		\vertex[fill, minimum size=2.5pt] at (\x,\y) {};
		
	\foreach \x in {14.5,15.5,16.5}
		\foreach \y in {6.5,7.5,8.5}
		\vertex[fill, minimum size=2.5pt] at (\x,\y) {};
	
	\foreach \x in {17.5,18.5,19.5}
		\foreach \y in {7.5,8.5}
		\vertex[fill, minimum size=2.5pt] at (\x,\y) {};
		
	\foreach \x in {20.5,21.5,22.5}
		\vertex[fill, minimum size=2.5pt] at (\x,8.5) {};		
		
	\draw (0.5,8.5) to (0.5,2.5);
	\foreach \x in {1.5,2.5,3.5} \draw (\x,8.5) to (\x,3.5);
	\draw (4.5,8.5) to (4.5,4.5);
	\foreach \x in {5.5,6.5,7.5,8.5,9.5,10.5,11.5} \draw (\x,8.5) to (\x,5.5);
	\foreach \x in {12.5,13.5} \draw (\x,8.5) to (\x,7.5);

	\draw[dash pattern={on 2pt off 2pt}] (0,0) -- (23,8);	
				
	\draw[very thick, color=Cerulean] (0,0)--(0,2)--(1,2)--(1,3)--(4,3)--(4,4)--(5,4)
	--(5,5)--(12,5)--(12,7)--(14,7)--(14,8)--(23,8);
	
	\node at (-0.5, 1.5) {\begin{turn}{90}\tiny$\alpha_4$\end{turn}};
	\node at (-0.5, 2.5) {\begin{turn}{90}\tiny$\alpha_5$\end{turn}};	
	\node at (-0.5, 3.5) {\begin{turn}{90}\tiny$\alpha_6$\end{turn}};	
	\node at (-0.5, 4.5) {\begin{turn}{90}\tiny$\alpha_7$\end{turn}};	
	\node at (-0.5, 5.5) {\begin{turn}{90}\tiny$\alpha_8$\end{turn}};	
	\node at (-0.5, 6.5) {\begin{turn}{90}\tiny$\alpha_9$\end{turn}};
	\node at (-0.5, 7.5) {\begin{turn}{90}\tiny$\alpha_{10}$\end{turn}};
	\node at (-0.5, 8.5) {\begin{turn}{90}\tiny$\alpha_{11}$\end{turn}};
	
	\node[color=Cerulean] at (-1,0) {\tiny$(0,0)$};
	\node[color=Cerulean] at (24.2,8) {\tiny$(23,8)$};	

\end{scope}

\end{tikzpicture}
\end{center}
\caption{An in-degree gravity diagram of $\car{12}{3}$, rotated to embed in its corresponding rational $(8,23)$-Dyck path under the bijection $\Psi_{\mathrm{in}}$.}
\label{fig.3-car_bijection_in}
\end{figure}

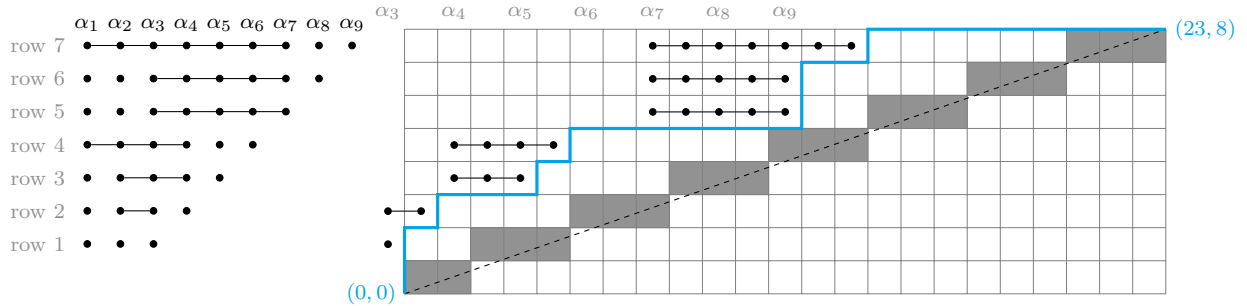
\begin{figure} [ht!]
\begin{center}
\begin{tikzpicture}[scale=0.5]

\begin{scope}
[xshift=0, scale=0.88]
	\vertex[fill, minimum size=2.5pt, label=above:\tiny$\alpha_1$](a10) at (0,0) {}; 	
	\vertex[fill, minimum size=2.5pt, label=above:\tiny$\alpha_2$](a20) at (1,0) {}; 
	\vertex[fill, minimum size=2.5pt, label=above:\tiny$\alpha_3$](a30) at (2,0) {};
	\vertex[fill, minimum size=2.5pt, label=above:\tiny$\alpha_4$](a40) at (3,0) {};
	\vertex[fill, minimum size=2.5pt, label=above:\tiny$\alpha_5$](a50) at (4,0) {};
	\vertex[fill, minimum size=2.5pt, label=above:\tiny$\alpha_6$](a60) at (5,0) {};
	\vertex[fill, minimum size=2.5pt, label=above:\tiny$\alpha_7$](a70) at (6,0) {};
	\vertex[fill, minimum size=2.5pt, label=above:\tiny$\alpha_8$](a80) at (7,0) {};
	\vertex[fill, minimum size=2.5pt, label=above:\tiny$\alpha_9$](a90) at (8,0) {};			
	\vertex[fill, minimum size=2.5pt](a11) at (0,-1) {}; 
	\vertex[fill, minimum size=2.5pt](a12) at (0,-2) {}; 
	\vertex[fill, minimum size=2.5pt](a13) at (0,-3) {}; 
	\vertex[fill, minimum size=2.5pt](a14) at (0,-4) {}; 
	\vertex[fill, minimum size=2.5pt](a15) at (0,-5) {}; 
	\vertex[fill, minimum size=2.5pt](a16) at (0,-6) {}; 
	\vertex[fill, minimum size=2.5pt](a21) at (1,-1) {}; 
	\vertex[fill, minimum size=2.5pt](a22) at (1,-2) {}; 
	\vertex[fill, minimum size=2.5pt](a23) at (1,-3) {};	
	\vertex[fill, minimum size=2.5pt](a24) at (1,-4) {}; 
	\vertex[fill, minimum size=2.5pt](a25) at (1,-5) {}; 
	\vertex[fill, minimum size=2.5pt](a26) at (1,-6) {};
	\vertex[fill, minimum size=2.5pt](a31) at (2,-1) {}; 
	\vertex[fill, minimum size=2.5pt](a32) at (2,-2) {}; 
	\vertex[fill, minimum size=2.5pt](a33) at (2,-3) {};	
	\vertex[fill, minimum size=2.5pt](a34) at (2,-4) {}; 
	\vertex[fill, minimum size=2.5pt](a35) at (2,-5) {}; 
	\vertex[fill, minimum size=2.5pt](a36) at (2,-6) {};
	\vertex[fill, minimum size=2.5pt](a41) at (3,-1) {}; 
	\vertex[fill, minimum size=2.5pt](a42) at (3,-2) {}; 
	\vertex[fill, minimum size=2.5pt](a43) at (3,-3) {};
	\vertex[fill, minimum size=2.5pt](a44) at (3,-4) {}; 
	\vertex[fill, minimum size=2.5pt](a45) at (3,-5) {};
	\vertex[fill, minimum size=2.5pt](a51) at (4,-1) {}; 
	\vertex[fill, minimum size=2.5pt](a52) at (4,-2) {}; 
	\vertex[fill, minimum size=2.5pt](a53) at (4,-3) {};
	\vertex[fill, minimum size=2.5pt](a54) at (4,-4) {};
	\vertex[fill, minimum size=2.5pt](a61) at (5,-1) {}; 
	\vertex[fill, minimum size=2.5pt](a62) at (5,-2) {}; 
	\vertex[fill, minimum size=2.5pt](a63) at (5,-3) {};
	\vertex[fill, minimum size=2.5pt](a71) at (6,-1) {}; 
	\vertex[fill, minimum size=2.5pt](a72) at (6,-2) {};
	\vertex[fill, minimum size=2.5pt](a81) at (7,-1) {};	
	\draw (a10) to (a70);
	\draw (a31) to (a71);
	\draw (a32) to (a72);
	\draw (a13) to (a43);
	\draw (a24) to (a44);
	\draw (a25) to (a35);
	\node[color=gray!85] at (-1.5,0) {\hbox{\tiny\textrm{row} $7$}};
	\node[color=gray!85] at (-1.5,-1) {\hbox{\tiny\textrm{row} $6$}};	
	\node[color=gray!85] at (-1.5,-2) {\hbox{\tiny\textrm{row} $5$}};
	\node[color=gray!85] at (-1.5,-3) {\hbox{\tiny\textrm{row} $4$}};
	\node[color=gray!85] at (-1.5,-4) {\hbox{\tiny\textrm{row} $3$}};
	\node[color=gray!85] at (-1.5,-5) {\hbox{\tiny\textrm{row} $2$}};	
	\node[color=gray!85] at (-1.5,-6) {\hbox{\tiny\textrm{row} $1$}};					\end{scope}

\begin{scope}
[xshift=240, yshift=-188, scale=0.88]
	\draw[fill, color=gray!85] (0,0) rectangle (2,1);
	\draw[fill, color=gray!85] (2,1) rectangle (5,2);
	\draw[fill, color=gray!85] (5,2) rectangle (8,3);
	\draw[fill, color=gray!85] (8,3) rectangle (11,4);
	\draw[fill, color=gray!85] (11,4) rectangle (14,5);
	\draw[fill, color=gray!85] (14,5) rectangle (17,6);
	\draw[fill, color=gray!85] (17,6) rectangle (20,7);
	\draw[fill, color=gray!85] (20,7) rectangle (23,8);
	\draw[very thin, color=gray!100] (0,0) grid (23,8);
	
	\vertex(v01)[fill, circle, inner sep=0, minimum size=2.5pt] at (-.5,1.5) {}; 	
	\vertex(v02)[fill, circle, inner sep=0, minimum size=2.5pt] at (-.5,2.5) {}; 	
	\vertex(v12)[fill, circle, inner sep=0, minimum size=2.5pt] at (.5,2.5) {};
	\vertex(v23)[fill, circle, inner sep=0, minimum size=2.5pt] at (1.5,3.5) {}; 	
	\vertex(v33)[fill, circle, inner sep=0, minimum size=2.5pt] at (2.5,3.5) {};
	\vertex(v43)[fill, circle, inner sep=0, minimum size=2.5pt] at (3.5,3.5) {};
	\vertex(v24)[fill, circle, inner sep=0, minimum size=2.5pt] at (1.5,4.5) {}; 	
	\vertex(v34)[fill, circle, inner sep=0, minimum size=2.5pt] at (2.5,4.5) {};
	\vertex(v44)[fill, circle, inner sep=0, minimum size=2.5pt] at (3.5,4.5) {};
	\vertex(v54)[fill, circle, inner sep=0, minimum size=2.5pt] at (4.5,4.5) {};
		
	\vertex(v85)[fill, circle, inner sep=0, minimum size=2.5pt] at (7.5,5.5) {}; 	
	\vertex(v95)[fill, circle, inner sep=0, minimum size=2.5pt] at (8.5,5.5) {};
	\vertex(v105)[fill, circle, inner sep=0, minimum size=2.5pt] at (9.5,5.5) {};
	\vertex(v115)[fill, circle, inner sep=0, minimum size=2.5pt] at (10.5,5.5) {};
	\vertex(v125)[fill, circle, inner sep=0, minimum size=2.5pt] at (11.5,5.5) {};	
	\vertex(v86)[fill, circle, inner sep=0, minimum size=2.5pt] at (7.5,6.5) {}; 	
	\vertex(v96)[fill, circle, inner sep=0, minimum size=2.5pt] at (8.5,6.5) {};
	\vertex(v106)[fill, circle, inner sep=0, minimum size=2.5pt] at (9.5,6.5) {};
	\vertex(v116)[fill, circle, inner sep=0, minimum size=2.5pt] at (10.5,6.5) {};
	\vertex(v126)[fill, circle, inner sep=0, minimum size=2.5pt] at (11.5,6.5) {};	
	\vertex(v87)[fill, circle, inner sep=0, minimum size=2.5pt] at (7.5,7.5) {}; 	
	\vertex(v97)[fill, circle, inner sep=0, minimum size=2.5pt] at (8.5,7.5) {};
	\vertex(v107)[fill, circle, inner sep=0, minimum size=2.5pt] at (9.5,7.5) {};
	\vertex(v117)[fill, circle, inner sep=0, minimum size=2.5pt] at (10.5,7.5) {};
	\vertex(v127)[fill, circle, inner sep=0, minimum size=2.5pt] at (11.5,7.5) {};	
	\vertex(v137)[fill, circle, inner sep=0, minimum size=2.5pt] at (12.5,7.5) {};
	\vertex(v147)[fill, circle, inner sep=0, minimum size=2.5pt] at (13.5,7.5) {};
	
	\draw(v02) to (v12);
	\draw(v23) to (v43);
	\draw(v24) to (v54);
	\draw(v85) to (v125);
	\draw(v86) to (v126);
	\draw(v87) to (v147);		
	
	\draw[dash pattern={on 2pt off 2pt}] (0,0) -- (23,8);
				
	\draw[very thick, color=Cerulean] (0,0)--(0,2)--(1,2)--(1,3)--(4,3)--(4,4)--(5,4)
	--(5,5)--(12,5)--(12,7)--(14,7)--(14,8)--(23,8);
	
	\node[color=Cerulean] at (-1,0) {\tiny$(0,0)$};
	\node[color=Cerulean] at (24.2,8) {\tiny$(23,8)$};	
	\node[color=gray!85] at (-0.5, 8.5) {\tiny$\alpha_3$};
	\node[color=gray!85] at (1.5, 8.5) {\tiny$\alpha_4$};
	\node[color=gray!85] at (3.5, 8.5) {\tiny$\alpha_5$};
	\node[color=gray!85] at (5.5, 8.5) {\tiny$\alpha_6$};		
	\node[color=gray!85] at (7.5, 8.5) {\tiny$\alpha_7$};
	\node[color=gray!85] at (9.5, 8.5) {\tiny$\alpha_8$};	
	\node[color=gray!85] at (11.5, 8.5) {\tiny$\alpha_9$};	

\end{scope}
\end{tikzpicture}
\end{center}
\caption{An out-degree gravity diagram of $\car{12}{3}$ and its corresponding rational $(8,23)$-Dyck path under the bijection $\Psi_{\mathrm{out}}$. 
}
\label{fig.3-car_bijection_out_d}
\end{figure}

\begin{prop}\label{prop.outgrav}
For $k\in \bbN$ and $n>k$, the map
$\Psi_{\mathrm{out}}:\outgrav_G(\bv_{\mathrm{out}})\rightarrow \calD(a,b)$ from the set of out-degree gravity diagrams of $G=\car{n+1}{k}$ to the set of rational $(a,b)$-Dyck paths is a bijection.
\end{prop}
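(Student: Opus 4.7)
The plan is to exhibit an explicit inverse $\Phi:\calD(a,b)\to\outgrav_G(\bv_{\mathrm{out}})$, which simultaneously handles injectivity and surjectivity. The key algebraic observation is that the two pieces of data defining the embedding combine into a single clean identity: for any line segment $L_i=[\ell_i,r_i]$,
$$\rp(L_i) = (r_i-k)(k-1) + (r_i - \ell_i) = (r_i-k)\,k + (k-\ell_i).$$
Writing $q_i := r_i-k$ and $s_i := k-\ell_i$, Lemma~\ref{lem.lineproperties}(a) guarantees $q_i\in\{0,\ldots,i-1\}$ and $s_i\in\{0,\ldots,k-1\}$, so $\rp(L_i) = q_i k + s_i$ is precisely the Euclidean division of $\rp(L_i)$ by $k$. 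This makes $(\ell_i,r_i)$ uniquely recoverable from $\rp(L_i)$ alone.

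Given a rational $(a,b)$-Dyck path $D$, I read off $p_i$, the rightmost column reached by $D$ in the $(i+1)$-th row of $Z$, for $i=1,\ldots,a-1$, and define
$$r_i := k + \lfloor p_i/k\rfloor, \qquad \ell_i := k - (p_i \bmod k), \qquad L_i := [\ell_i, r_i].$$
I set $\Phi(D)$ to be the diagram whose only non-trivial segment in row $i$ is $L_i$ (with all other dots interpreted as trivial segments). Well-definedness of $\Phi$ requires checking: (i) the bounds $1 \le \ell_i \le k$ and $k \le r_i \le k+i-1$, which follow from $0 \le p_i \le ik-1$ (the latter being the Dyck condition that $D$ stays above the shaded $\bt$-region in row $i+1$); and (ii) the gravity-diagram conventions of Section~\ref{sec.gravity}. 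For (ii), the Dyck path's weak monotonicity $p_1 \le p_2 \le \cdots \le p_{a-1}$ gives $r_i \le r_{i+1}$ (higher rows end at weakly later columns), and if $r_i = r_{i+1}$ then $\lfloor p_i/k\rfloor = \lfloor p_{i+1}/k\rfloor$, so $p_i \le p_{i+1}$ forces $s_i \le s_{i+1}$, giving $\ell_i \ge \ell_{i+1}$ and hence $d(L_i) \le d(L_{i+1})$, which is the required tie-breaking convention.

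Mutual invertibility is now immediate from the identity $\rp(L_i) = q_i k + s_i$: the composition $\Psi_{\mathrm{out}} \circ \Phi$ recovers the right-endpoint sequence $(p_i)$, which reconstructs $D$ via the rectilinear convex hull procedure of Definition~\ref{defn.psiout}; conversely, $\Phi \circ \Psi_{\mathrm{out}}$ recovers $(r_i, \ell_i)$ through Euclidean division of $\rp(L_i)$ by $k$.

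The main obstacle is bookkeeping the translation between the intrinsic gravity-diagram coordinates $(\ell_i, r_i)$ and the extrinsic Dyck-path coordinates $p_i$; once the identity $\rp(L_i) = (r_i-k)k + (k-\ell_i)$ is identified as an instance of the division algorithm, everything else reduces to direct arithmetic and to quoting Lemmas~\ref{lem.lineproperties} and~\ref{lem.rightendpoints}. No additional combinatorial machinery beyond the two lemmas already established is needed.
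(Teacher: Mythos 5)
Your proof is correct and follows essentially the same route as the paper's: both invert $\Psi_{\mathrm{out}}$ by recovering $r_i-k$ as the quotient of $\rp(L_i)$ upon division by $k$ (the paper via the sandwich $(r_i-k)k \le \rp(L_i) \le (r_i-k+1)k-1$ drawn from Lemma~\ref{lem.lineproperties}(b), you via the equivalent identity $\rp(L_i)=(r_i-k)k+(k-\ell_i)$). Your explicit verification that the reconstructed segments satisfy the ordering conventions of Section~\ref{sec.gravity} is a welcome addition that the paper leaves implicit.
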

\begin{proof}
Let $a=n-k$ and $b=ka-1$.
To see that $\Psi_{\mathrm{out}}$ is a bijection, we will describe the inverse map by reconstructing the line segments $L_1,\ldots, L_{a-1}$ for an out-degree gravity diagram $\Gamma \in \outgrav_G(\bv_{\mathrm{out}})$.
Let $\bs$ be a rational $(a,b)$-Dyck path, and let $L_i=[\ell_i,r_i]$ denote the line segment that we will reconstruct from the $(i+1)$-th row of the Dyck path.
The shape of $\bs$ immediately dictates the location of the right endpoint of each embedded line segment, so we need only to determine the location of the left endpoint. 

Suppose $jk \leq \rp(L_i) \leq (j+1)k-1$ for some $j=0,\ldots, i-2$.
By construction, $\lp(L_i)= (r_i-k)(k-1)$. We claim that $r_i-k=j$.  From there, we would have the length $d(L_i)=\rp(L_i)-\lp(L_i)$, and then we can fully determine the line segment $L_i=[k+j-d(L_i),k+j]$.

As seen in Lemma~\ref{lem.lineproperties}(b), $r_i-k \leq d(L_i) \leq r_i-1$. And since $d(L_i) = \rp(L_i)-\lp(L_i)$, it follows that
$$(r_i-k)k \leq \rp(L_i) \leq (r_i-k+1)k -1.$$
Since we assumed that $\rp(L_i) \leq (j+1)k-1$, then the inequality on the right side implies $r_i-k\leq j$.  
On the other hand, we also assumed that $jk\leq\rp(L_i)$, so the inequality of the left side implies $j\leq r_i-k$. Thus we have $r_i-k=j$, as claimed.

Since we can uniquely recover the embedded line segments and therefore the out-degree gravity diagram from any $(a,b)$-Dyck path $\bs$, then $\Psi_{\mathrm{out}}$ is a bijection.
\end{proof}

\begin{corollary} \label{cor.inout}
The composition $\Psi_{\mathrm{in}}^{-1}\circ\Psi_{\mathrm{out}}$ is a bijection between the sets of out-degree and in-degree gravity diagrams of $\car{n+1}{k}$.
\end{corollary}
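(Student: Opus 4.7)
The proof plan is essentially immediate: both $\Psi_{\mathrm{in}}$ and $\Psi_{\mathrm{out}}$ have been established as bijections onto the same target set $\calD(a,b)$ of rational $(a,b)$-Dyck paths (with $a = n-k$, $b = k(n-k)-1$), so their composition $\Psi_{\mathrm{in}}^{-1}\circ\Psi_{\mathrm{out}}$ is automatically a bijection between the source sets.

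More precisely, I would first invoke Theorem~\ref{thm.onezerozero}, whose proof constructs $\Psi_{\mathrm{in}}:\ingrav_G(\bv_{\mathrm{in}})\to \calD(a,b)$ and verifies it is a bijection, so the inverse map $\Psi_{\mathrm{in}}^{-1}:\calD(a,b)\to\ingrav_G(\bv_{\mathrm{in}})$ exists and is a bijection. Then I would invoke Proposition~\ref{prop.outgrav}, which gives the bijection $\Psi_{\mathrm{out}}:\outgrav_G(\bv_{\mathrm{out}})\to\calD(a,b)$. The composition of two bijections is a bijection, so
\[
\Psi_{\mathrm{in}}^{-1}\circ\Psi_{\mathrm{out}} : \outgrav_G(\bv_{\mathrm{out}})\longrightarrow \ingrav_G(\bv_{\mathrm{in}})
\]
is a bijection, which is exactly the statement of the corollary.

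There is no genuine obstacle; the only thing to verify is that the two bijections indeed share the same codomain $\calD(a,b)$ with matching parameters $a=n-k$ and $b=k(n-k)-1$, which they do by construction in both proofs (the Dyck path grid $Z$ in each case is the $b\times a$ lattice with the same $\bt$-region given by the transpose of $(k-1,k^{n-k-1})$, cf.\ Remark~\ref{rem.rationalDyck}). Consequently the proof is a one-line composition-of-bijections argument, so I would present it simply as: \emph{this follows immediately from Theorem~\ref{thm.onezerozero} and Proposition~\ref{prop.outgrav}, since both $\Psi_{\mathrm{in}}$ and $\Psi_{\mathrm{out}}$ are bijections onto $\calD(a,b)$}. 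If desired, one could remark that this combinatorial bijection between in-degree and out-degree gravity diagrams provides an alternative perspective on the identity $|\outgrav_G(\bv_{\mathrm{out}})|=|\ingrav_G(\bv_{\mathrm{in}})|$ already guaranteed by Corollary~\ref{cor.gravitydiagrams}, foreshadowing the geometric question raised in Remark~\ref{rem.duality}.
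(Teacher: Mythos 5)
Your proposal is correct and matches the paper's own (one-line) proof exactly: both arguments observe that $\Psi_{\mathrm{in}}$ and $\Psi_{\mathrm{out}}$ are bijections onto the common set $\calD(a,b)$ of rational $(a,b)$-Dyck paths, so the composition is a bijection, with diagrams corresponding precisely when they share the same associated Dyck path.
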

\begin{proof}
An out-degree and an in-degree gravity diagram of $\car{n+1}{k}$ correspond to each other if they have the same associated rational $(a,b)$-Dyck path. 
\end{proof}

\begin{example}\label{eg.pairofDycks}
Figures~\ref{fig.3-car_bijection_in} and~\ref{fig.3-car_bijection_out_d} show a pair of in-degree and out-degree gravity diagrams for $G=\car{12}{3}$ which correspond to each other under the bijection $\Psi_{\mathrm{in}}^{-1}\circ\Psi_{\mathrm{out}}$ because they have the same associated rational $(8,23)$-Dyck path. 

In Figure~\ref{fig.3-car_bijection_out_d}, we have an out-degree gravity diagram $\Gamma \in \outgrav_{G}(\bv_{\mathrm{out}})$.
The $(2j)$-th column of squares of the Dyck path diagram are indexed by $\alpha_{3+j}$ for $j=0,\ldots, 6$. These are indicated in light grey across the top row of the diagram.
The line segments of $\Gamma$ are $L_1,\ldots, L_7=[3,3], [2,3], [2,4], [1,4], [3,7], [3,7], [1,7]$, and $L_i=[\ell_i,r_i]$ is embedded into the $(i+1)$-th row of squares of the associated rational $(8,23)$-Dyck path with their left endpoints occupying the column labeled by $\alpha_{r_i}$ for each $i$. The `rectilinear convex hull' of the embedded line segments forms the subpartition $(14,12,12,5,4,1)$ of the partition $\lambda(8,3)=(20,17,14,11,8,5,2)$.
\end{example}

\begin{remark}
We make a few comments regarding the special case of the classical caracol graph $G=\car{n+1}{1}$. In this case, the out-degree gravity diagrams are defined on a triangular array of $n-j-1$ dots in the $j$-th column for $j=1,\ldots, n-2$, and each horizontal line segment extends from the first column to the $j$-th column for some $j=1,\ldots, n-2$.  Similarly, the in-degree gravity diagrams are defined on a triangular array of $j-2$ dots in the $j$-th column for $j=3,\ldots, n$, and each horizontal line segment extends from the last column to the $j$-th column for some $j=3,\ldots, n$.  See Figure~\ref{fig.car61} for the full sets of out-degree and in-degree gravity diagrams for $\car61$.

Given this, an `obvious' bijection between the out-degree and in-degree gravity diagrams for $\car{n+1}{1}$ is the reflection about a vertical axis.  Corollary~\ref{cor.inout} gives a second bijection; the out-degree diagrams can equivalently be thought of as subpartitions of the staircase partition $\delta_{n-2}=(n-3,n-4,\ldots, 2,1)$, with the parts of the subpartition defined by the lengths of the horizontal line segments.  The bijection $\Psi_{\mathrm{in}}^{-1}\circ\Psi_{\mathrm{out}}$ amounts to being the conjugation of partitions.  See Figure~\ref{fig.car81} for an example for $\car81$.
\end{remark}

\begin{figure}[ht!]
\begin{tikzpicture}[scale=0.5]
\begin{scope}
	\draw[fill, color=gray!85] (0,0) rectangle (1,1);
	\draw[fill, color=gray!85] (1,1) rectangle (2,2);
	\draw[fill, color=gray!85] (2,2) rectangle (3,3);
	\draw[fill, color=gray!85] (3,3) rectangle (4,4);
	\draw[fill, color=gray!85] (4,4) rectangle (5,5);
	\draw[very thin, color=gray!100] (0,0) grid (5,5);
	
	\vertex[fill, minimum size=3pt]  at (-0.5, 4.5) {}; 
	\vertex[fill, minimum size=3pt]  at (0.5, 4.5) {}; 
	\vertex[fill, minimum size=3pt]  at (1.5, 4.5) {};
	\vertex[fill, minimum size=3pt]  at (2.5, 4.5) {}; 
	\vertex[fill, minimum size=3pt]  at (3.5, 4.5) {};	
	\vertex[fill, minimum size=3pt]  at (-0.5, 3.5) {}; 	
	\vertex[fill, minimum size=3pt]  at (0.5, 3.5) {}; 
	\vertex[fill, minimum size=3pt]  at (1.5, 3.5) {};
	\vertex[fill, minimum size=3pt]  at (2.5, 3.5) {};
	\vertex[fill, minimum size=3pt]  at (-0.5, 2.5) {}; 	
	\vertex[fill, minimum size=3pt]  at (0.5, 2.5) {}; 
	\vertex[fill, minimum size=3pt]  at (1.5, 2.5) {};
	\vertex[fill, minimum size=3pt]  at (-0.5, 1.5) {}; 	
	\vertex[fill, minimum size=3pt]  at (0.5, 1.5) {};		
	\vertex[fill, minimum size=3pt]  at (-0.5, 0.5) {};
	\node at (-0.5, 5.5) {\tiny$\alpha_1$};
	\node at (0.5, 5.5) {\tiny$\alpha_2$};
	\node at (1.5, 5.5) {\tiny$\alpha_3$};
	\node at (2.5, 5.5) {\tiny$\alpha_4$};		 	
	\node at (3.5, 5.5) {\tiny$\alpha_5$};
	\draw (-0.5,4.5)--(2.5,4.5);
	\draw (-0.5,3.5)--(1.5,3.5);
	\draw (-0.5,2.5)--(1.5,2.5);
	\draw (-0.5,1.5)--(0.5,1.5);
	\node at (2.5,-1) {$\lambda=(3,2,2,1)$};
	
	\draw[very thick, color=Cerulean] (0,0)--(0,1)--(1,1)--(1,2)--(2,2)--(2,4)--(3,4)--(3,5)--(5,5);	
\end{scope}

\begin{scope}[xshift=250, yshift=65]
	\node[label=above:$\Psi_{\mathrm{in}}^{-1}\circ\Psi_{\mathrm{out}}$] at (0,0) {$\longrightarrow$};
\end{scope}

\begin{scope}[xshift=380]
	\draw[fill, color=gray!85] (0,0) rectangle (1,1);
	\draw[fill, color=gray!85] (1,1) rectangle (2,2);
	\draw[fill, color=gray!85] (2,2) rectangle (3,3);
	\draw[fill, color=gray!85] (3,3) rectangle (4,4);
	\draw[fill, color=gray!85] (4,4) rectangle (5,5);
	\draw[very thin, color=gray!100] (0,0) grid (5,5);
	
	\vertex[fill, minimum size=3pt] at (0.5, 5.5) {}; 
	\vertex[fill, minimum size=3pt] at (1.5, 5.5) {};
	\vertex[fill, minimum size=3pt] at (2.5, 5.5) {}; 
	\vertex[fill, minimum size=3pt] at (3.5, 5.5) {}; 
	\vertex[fill, minimum size=3pt] at (4.5, 5.5) {};	
	\vertex[fill, minimum size=3pt] at (0.5, 4.5) {}; 
	\vertex[fill, minimum size=3pt] at (1.5, 4.5) {};
	\vertex[fill, minimum size=3pt] at (2.5, 4.5) {}; 
	\vertex[fill, minimum size=3pt] at (3.5, 4.5) {};	
	\vertex[fill, minimum size=3pt] at (0.5, 3.5) {}; 
	\vertex[fill, minimum size=3pt] at (1.5, 3.5) {};
	\vertex[fill, minimum size=3pt] at (2.5, 3.5) {};
	\vertex[fill, minimum size=3pt] at (0.5, 2.5) {}; 
	\vertex[fill, minimum size=3pt] at (1.5, 2.5) {};
	\vertex[fill, minimum size=3pt] at (0.5, 1.5) {};	
	\draw (0.5,1.5)--(0.5,5.5);
	\draw (1.5,2.5)--(1.5,5.5);
	\draw (2.5,4.5)--(2.5,5.5);	
	\node at (-0.5, 1.5) {\begin{turn}{90}\tiny$\alpha_3$\end{turn}};	
	\node at (-0.5, 2.5) {\begin{turn}{90}\tiny$\alpha_4$\end{turn}};	
	\node at (-0.5, 3.5) {\begin{turn}{90}\tiny$\alpha_5$\end{turn}};	
	\node at (-0.5, 4.5) {\begin{turn}{90}\tiny$\alpha_6$\end{turn}};	
	\node at (-0.5, 5.5) {\begin{turn}{90}\tiny$\alpha_7$\end{turn}};	
	\node at (2.5,-1) {$\lambda'=(4,3,1)$};
						
	\draw[very thick, color=Cerulean] (0,0)--(0,1)--(1,1)--(1,2)--(2,2)--(2,4)--(3,4)--(3,5)--(5,5);
\end{scope}
\end{tikzpicture}
\caption{The bijection $\Psi_{\mathrm{in}}^{-1}\circ\Psi_{\mathrm{out}}$ for the out-degree and in-degree gravity diagrams for $\car{n+1}1$ amounts to the conjugation of partitions.}
\label{fig.car81}
\end{figure}
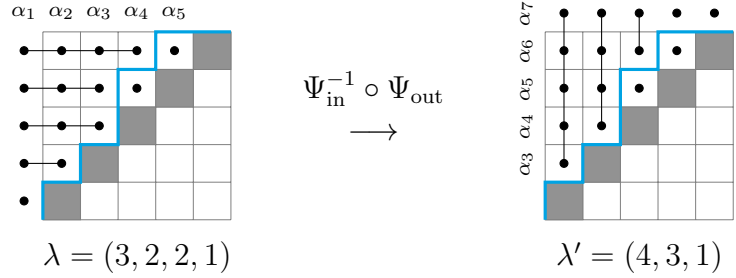

\begin{remark} \label{rem.duality}
To summarize, we have seen that the volume of the flow polytope of the $k$-caracol graph $G=\car{n+1}{k}$ with unit flow can be computed by counting the number lattice points of two different graphs. When $k\geq2$,
\begin{align*}
\big|\calF_{G_{\mathrm{in}}} 
&\left(k-1, k^{n-k-1}, -k(n-k)+1\right) \cap \bbZ^{\dim G_{\mathrm{in}} } \big|\\
&=K_{G_{\mathrm{in}}} \left(k-1, k^{n-k-1}, -k(n-k)+1\right)\\
&=\vol \calF_G(1,0,\ldots, 0,-1)\\
&=K_{G_{\mathrm{out}}} \left(k(n-k)-2, -(n-k)^{k-2}, -(n-k-1), (-1)^{n-k-1}\right)\\
&=\big|\calF_{G_{\mathrm{out}}}\left(k(n-k)-2, -(n-k)^{k-2}, -(n-k-1), (-1)^{n-k-1}\right) \cap \bbZ^{\dim G_{\mathrm{out}} } \big|,
\end{align*}
where $G_{\mathrm{in}}$ is the restriction of $G=\car{n+1}{k}$ to the vertices $\{k,\ldots,n+1\}$ and $G_{\mathrm{out}}$ is the restriction of $G=\car{n+1}{k}$ to the vertices $\{1,\ldots,n-1\}$.
We point out that at $k=2$, $G_{\mathrm{in}}=\PS_{n-1}$, and for $k\geq3$, $G_{\mathrm{in}} = \Car_{n-k+2}$. 
The case $k=1$ is trivial since $G_{\mathrm{in}} = \PS_{n-1} = G_{\mathrm{out}}^{\mathrm{rev}}$, and $\calF_{G_{\mathrm{in}}}(1^{n-2},-(n-2))$ is equal to $\calF_{G_{\mathrm{out}}}(n-2, (-1)^{n-2})$ by reversing the flow.
For $k\geq2$, it may be interesting to investigate any geometric implications behind the combinatorial correspondence given by $\Psi_{\mathrm{in}}^{-1} \circ \Psi_{\mathrm{out}}$ on the lattice points of these flow polytopes of different dimensions.
\end{remark}

\section{Volume of the $k$-caracol polytope with net flow $(1,\ldots,1,-n)$} \label{sec.kcaracol}
In~\cite{BGHHKMY}, we introduced a combinatorial interpretation of the Lidskii volume formula (Theorem~\ref{thm.lidskii}) and called the objects unified diagrams. In this section, we define unified diagrams for the $k$-caracol graph, and compute the volume of the flow polytope of $\car{n+1}{k}$ with net flow $\ba=(1,\ldots, 1,-n)$. As a corollary, we recover the analogous result for the classical caracol graph and the Pitman--Stanley graph.

We point out that the results in this section is the first application of using unified diagrams to compute volumes of flow polytopes whose underlying graphs are not planar.

\subsection{Unified diagrams}
In this section, we restrict ourselves to defining unified diagrams for flow polytopes with net flow $\ba=(1,\ldots, 1,-n)$. We will discuss unified diagrams in full generality in Section~\ref{sec.abbb}.

\begin{defn} Let $\bt = (t_1,\ldots, t_p) \vDash q$.
A {\em labeled $\bt$-Dyck path} is a pair $(\bs,\sigma)$ where $\bs=(s_1,\ldots, s_p)$ is a $\bt$-Dyck path and $\sigma$ is a permutation in the symmetric group $\fS_q$, whose descent set is contained in $\{s_1+\cdots+s_j\mid j=1,\ldots, p-1\}$.  Let $\PF_\bt$ denote the set of labeled $\bt$-Dyck paths, which are also known as {\em generalized parking functions}.
\end{defn}
\begin{defn}\label{defn.unified}
Let $G$ be an acyclic directed graph with $n+1$ vertices and shifted out-degree vector $\bt$. A {\em unified diagram} for the flow polytope $\calF_G(1,\ldots,1,-n)$ is a triple $(\bs, \sigma, \Gamma)$ where $(\bs,\sigma)$ is a labeled $\bt$-Dyck path and $\Gamma$ is an out-degree gravity diagram for $\outgrav_{G}(\bs-\bt,0)$. Let $\calU_G$ denote this set of unified diagrams.
\end{defn}

Visually, if $\bt=(t_1,\ldots, t_p)\vDash q$, then $(\bs,\sigma)$ is the lattice path $N^{s_1}E \cdots N^{s_p}E$ on the rectangular grid form $(0,0)$ to $(p, q)$ which lies above the shaded $\bt$-region, and whose north steps are labeled by the permutation $\sigma$ so that the labels on consecutive north steps are nondecreasing.
See Figure~\ref{fig.UDcar73} for an example where $G=\car73$. There, the shifted out-degree vector is $\bt=(3,3,2,1,1,0)$, indicated by the shaded squares, and the $\sigma$-labeled $\bt$-Dyck path $\bs=(5,4,0,1,0,0)$ is indicated in red.  The gravity diagram $\Gamma$, which represents a vector partition of $\bs-\bt=2\alpha_1+3\alpha_2+\alpha_3+\alpha_4$ with respect to graph $\car73$, is embedded in the squares bounded between the $\bt$-Dyck path and the shaded $\bt$-region.

\begin{figure}[ht!]
\begin{tikzpicture}[scale=0.5]
\begin{scope}[xshift=0]

	\draw[fill, color=gray!85] (0,0) rectangle (1,3);
	\draw[fill, color=gray!85] (1,3) rectangle (2,6);
	\draw[fill, color=gray!85] (2,6) rectangle (3,8);
	\draw[fill, color=gray!85] (3,8) rectangle (4,9);
	\draw[fill, color=gray!85] (4,9) rectangle (5,10);
	\draw[very thin, color=gray!100] (0,0) grid (5,10);	

	\vertex[fill, minimum size=3pt] at (.5,3.5){};
	\vertex[fill, minimum size=3pt] at (.5,4.5){};
	\vertex[fill, minimum size=3pt] at (1.5,6.5){};
	\vertex[fill, minimum size=3pt] at (1.5,7.5){};
	\vertex[fill, minimum size=3pt] at (1.5,8.5){};				
	\vertex[fill, minimum size=3pt] at (2.5,8.5){};
	\vertex[fill, minimum size=3pt] at (3.5,9.5){};
	\draw (1.5,6.5)--(2.5,8.5)--(3.5,9.5);
								
	\draw[very thick, color=red] (0,0)--(0,5)--(1,5)--(1,9)--(3,9)--(3,10)--(5,10);
	\node at (-.3,.5) {\footnotesize\textcolor{red}{$2$}};
	\node at (-.3,1.5) {\footnotesize\textcolor{red}{$5$}};
	\node at (-.3,2.5) {\footnotesize\textcolor{red}{$8$}};
	\node at (-.3,3.5) {\footnotesize\textcolor{red}{$9$}};
	\node at (-.4,4.5) {\footnotesize\textcolor{red}{$10$}};
	\node at (.7,5.5) {\footnotesize\textcolor{red}{$1$}};
	\node at (.7,6.5) {\footnotesize\textcolor{red}{$3$}};
	\node at (.7,7.5) {\footnotesize\textcolor{red}{$4$}};
	\node at (.7,8.5) {\footnotesize\textcolor{red}{$6$}};
	\node at (2.7,9.5) {\footnotesize\textcolor{red}{$7$}};
	
	\node at (0.5, 10.5) {\tiny$\alpha_1$};
	\node at (1.5, 10.5) {\tiny$\alpha_2$};
	\node at (2.5, 10.5) {\tiny$\alpha_3$};
	\node at (3.5, 10.5) {\tiny$\alpha_4$};
	\node at (-.5,-.5) {\tiny$(0,0)$};
	\node at (6.2,10) {\tiny$(5,10)$};	
\end{scope}
\end{tikzpicture}
\caption{A unified diagram $U=(\bs,\sigma,\Gamma)$ for $\car73$.}
\label{fig.UDcar73}
\end{figure}

\begin{remark} Since $\sum_{j=1}^n t_j= m-n$, then $(m-n)\be_1=(m-n,0,\ldots,0) \rhd \bt$, and $\bv_{\mathrm{out}}=(m-n)\be_1-\bt$.  All other $\bs\vDash m-n$ which dominate $\bt$ satisfy $(m-n)\be_1 \rhd \bs \rhd \bt$, and $\outgrav_G(\bs-\bt,0) \subseteq \outgrav_G(\bv_{\mathrm{out}})$ for all $\bs\rhd\bt$.
\end{remark}

Unified diagrams were created for the purpose of combinatorializing the generalized Lidskii volume formula. We restate the formula in a way that is convenient for us to use later on.  This next result follows from the fact that the number of labeled $\bt$-Dyck paths is $|\PF_\bt| = \sum_{\bs\rhd\bt}{|\bt|\choose \bs}$.
\begin{theorem}[Parking function version of the Lidskii volume formula, {\cite[Theorems 4.3, 4.4]{BGHHKMY}}] 
\label{thm.parkinglidskii}
With the same conditions as in Theorem~\ref{thm.lidskii}, the volume of the flow polytope $\calF_G(\ba)$ of $G$ with net flow vector $\ba$ is
$$\vol\calF_G(\ba) = \sum_{(\bs,\sigma)\in \PF_\bt} \ba^\bs \cdot K_G(\bs-\bt,0) = |\,\calU_G(\ba)|. $$
\end{theorem}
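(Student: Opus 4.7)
The plan is to derive the two equalities separately: the first by regrouping the generalized Lidskii volume formula (Theorem~\ref{thm.lidskii}) via a well-known interpretation of the multinomial coefficient as counting permutations with a prescribed descent set, and the second, in the specialized setting $\ba = (1,\ldots,1,-n)$ in which $\calU_G$ was defined, by replacing the Kostant partition function with its gravity-diagram model (Theorem~\ref{thm.gravitydiagrams}).

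For the first equality, begin with
$$\vol\calF_G(\ba) = \sum_{\bs \rhd \bt} \binom{m-n}{s_1,\ldots,s_n}\, a_1^{s_1}\cdots a_n^{s_n}\, K_G(\bs-\bt,0).$$
For a fixed weak composition $\bs = (s_1,\ldots,s_n) \vDash m-n$, the multinomial coefficient $\binom{m-n}{s_1,\ldots,s_n}$ is exactly the number of permutations $\sigma \in \fS_{m-n}$ whose descent set is contained in $\{s_1+\cdots+s_j \mid j=1,\ldots,n-1\}$: any such $\sigma$ corresponds to choosing an ordered set partition of $\{1,\ldots,m-n\}$ into blocks of sizes $s_1,\ldots,s_n$ and listing the entries of each block in increasing order. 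Substituting this sum for the multinomial coefficient re-indexes Lidskii's formula over pairs $(\bs,\sigma)$, i.e.\ over $\PF_\bt$, yielding $\sum_{(\bs,\sigma) \in \PF_\bt} \ba^\bs K_G(\bs-\bt,0)$, as desired.

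For the second equality, take $\ba=(1,\ldots,1,-n)$ so that $\ba^\bs = 1$ for every $\bs$. Since $|\bs| = |\bt| = m-n$, the vector $(\bs-\bt, 0) \in \bbZ^{n+1}$ has coordinates summing to zero, so Theorem~\ref{thm.gravitydiagrams} applies and yields $K_G(\bs-\bt,0) = |\outgrav_G(\bs-\bt,0)|$. The sum then enumerates triples $(\bs,\sigma,\Gamma)$ with $(\bs,\sigma)\in \PF_\bt$ and $\Gamma \in \outgrav_G(\bs-\bt,0)$, which is precisely the set $\calU_G$ per Definition~\ref{defn.unified}. The proof is mostly definitional bookkeeping; the main, if minor, obstacle is ensuring that the descent-set convention chosen for labeled $\bt$-Dyck paths agrees with the combinatorial interpretation of the multinomial coefficient being used, and that the vector $(\bs-\bt,0)$ satisfies the zero-sum hypothesis needed to invoke Theorem~\ref{thm.gravitydiagrams}.
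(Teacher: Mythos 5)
Your proposal is correct and matches the paper's own (very brief) justification, which simply invokes the fact that $|\PF_\bt| = \sum_{\bs\rhd\bt}{|\bt|\choose \bs}$, i.e.\ exactly your observation that the multinomial coefficient counts the permutations $\sigma$ with the prescribed descent-set containment, so the Lidskii sum regroups over pairs $(\bs,\sigma)$. The only small point worth noting is that the second equality is stated for general $\ba$, where $\calU_G(\ba)$ is the set of quadruples $(\bs,\sigma,\alpha,\Gamma)$ of Definition~\ref{defn.unifieda} and the factor $\ba^\bs = a_1^{s_1}\cdots a_n^{s_n}$ counts the net flow labels $\alpha\in[a_1]^{s_1}\times\cdots\times[a_n]^{s_n}$; your argument for $\ba=(1,\ldots,1,-n)$ extends verbatim once this is observed.
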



\subsection{Refinements of unified diagrams}
In this section, we set up the combinatorial tools necessary for enumerating the unified diagrams for the flow polytope of the $k$-caracol graphs. This will be achieved by stratifying the set of unified diagrams according to level.  

\begin{defn} Let $\bt=(t_1,\ldots, t_p)\vDash q$. Given a $\bt$-Dyck path $\bs=(s_1,\ldots, s_p)$, its {\em $k$-th column level} is defined to be $q-(s_1+\cdots+s_k)$, for $k=1,\ldots, p$. Visually, this is the height at which the $k$-th east step of $\bs$ occurs, where the zero-th level starts from the top of the Dyck path at $y=q$.  The possible levels in the $k$-th column are $i=0,\ldots, q-(t_1+\cdots+t_k)$. 
\end{defn}

\begin{defn}
Let $G$ be a directed graph with $n+1$ vertices and shifted out-degree vector $\bt \vDash m-n$. 
If $(\bs,\sigma)$ is a labeled $\bt$-Dyck path whose $k$-th column level is $i$, we can decompose it into two labeled Dyck paths $(\bp,\pi)$ and $(\bq,\kappa)$ respectively corresponding to the subpaths before and after the $k$-th east step of $\bs$. 

We can standardize the labelings so that $\kappa \in \fS_i$ and $\pi\in \fS_{m-n-i}$. There are ${m-n\choose i}$ ways to choose a label set of size $i$, so  
\begin{equation}\label{eqn.stdU}
|\,\calU_G| = \sum_{i=0}^{m-n-(t_1+\cdots+t_k)} {m-n\choose i} 
|\,\mathcal{SU}_G^{(k,i)}|
\end{equation}
where $\mathcal{SU}_G^{(k,i)} =  \{ ((\bp,\pi), (\bq,\kappa), \Gamma) \}$ is the set of {\em standardized level-$(k,i)$ unified diagrams} for $\calF_G$; the concatenation of $\bp$ and $\bq$ is a $\bt$-Dyck path $\bs$ with $s_1+\cdots+s_k = m-n-i$, the labels $\kappa\in \fS_i$ and $\pi\in \fS_{m-n-i}$, and $\Gamma\in \outgrav_G(\bs-\bt,0)$ is an out-degree gravity diagram with $m-n-(t_1+\cdots+t_k)-i$ dots in the $k$-th column.
\end{defn}

\begin{example}
The labeled $\bt$-Dyck path $(\bs,\sigma)$ in the unified diagram $U$ for $\car73$ from Figure~\ref{fig.UDcar73} has level $i=1$ in the third column, and it decomposes into the two labeled Dyck paths $(\bp,\pi)$ and $(\bq,\kappa)$ with standardized labelings where $\bp = (5,4,0)$, $\sigma=257891346\in \fS_9$, $\bq=(1,0,0)$, and $\kappa = 1\in \fS_1$.
\end{example}

We need one further refinement on the set of unified diagrams.
\begin{defn}  
Let $G$ be a directed graph with $n+1$ vertices and shifted out-degree vector $\bt \vDash m-n$. For $k\in \bbN$ and $i\in \bbZ_{\geq0}$, a {\em truncated level-$(k,i)$ unified diagram} for the flow polytope $\calF_G$ is obtained by taking a standardized level-$(k,i)$ unified diagram $(\bs,\sigma, \Gamma)$ for $\calF_G$ and erasing the {\em initial part} $(\bp,\pi)$ of the labeled $\bt$-Dyck path $(\bs,\sigma)$ which occurs before (and including) the $k$-th east step of $\bs$. 

In other words, this is a triple $(\bq,\kappa,\Gamma)$ where $(\bq, \kappa)$ is a labeled $\bt'=(t_{k+1},\ldots, t_n)$-Dyck path that begins at the coordinates $(k, m-n-i)$ 
and is labeled by $\kappa\in \fS_i$ so that the labels on consecutive north steps of 
$\bq$ are non-decreasing, and $\Gamma$ is an out-degree gravity diagram for $G$ with $m-n-(t_1+\cdots+t_k)-i$ dots in its $k$-th column.
Let $\calU_G^{(k,i)}$ denote the set of truncated level-$(k,i)$ unified diagrams for $G$. 
\end{defn}

\begin{example}
The left side of Figure~\ref{fig.3-caracol_ud_103} shows a truncated level-$(3,2)$ unified diagram for $G = \car{10}3$.  Note that the only requirement on how the line segments of the embedded gravity diagram $\Gamma$ are depicted is that the line segments must occupy the lowest possible dots in each column.  That is, `gravity' drags the line segments downwards.
\end{example}

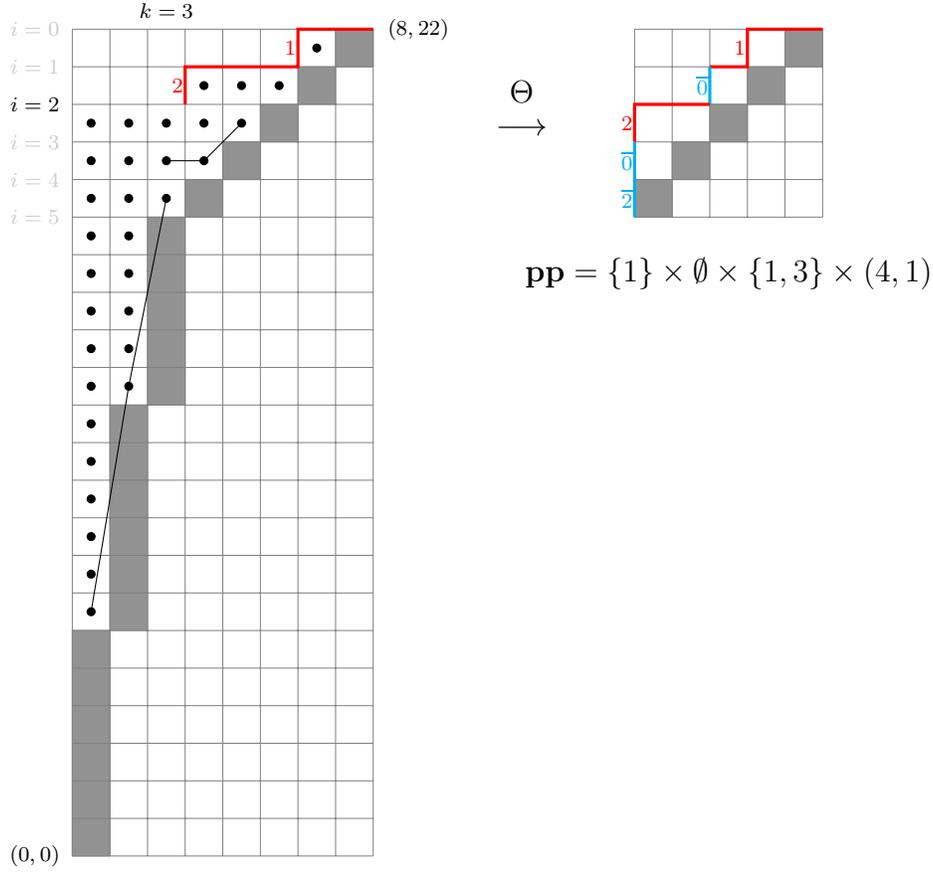
\begin{figure}[ht!]
\begin{tikzpicture}[scale=0.5]
\begin{scope}[xshift=0]

	\draw[fill, color=gray!85] (0,0) rectangle (1,6);
	\draw[fill, color=gray!85] (1,6) rectangle (2,12);
	\draw[fill, color=gray!85] (2,12) rectangle (3,17);
	\draw[fill, color=gray!85] (3,17) rectangle (4,18);
	\draw[fill, color=gray!85] (4,18) rectangle (5,19);
	\draw[fill, color=gray!85] (5,19) rectangle (6,20);
	\draw[fill, color=gray!85] (6,20) rectangle (7,21);
	\draw[fill, color=gray!85] (7,21) rectangle (8,22);	
	\draw[very thin, color=gray!100] (0,0) grid (8,22);	

	\foreach \y in {6.5,7.5,8.5,9.5,10.5,11.5,12.5,13.5,14.5,15.5,16.5,17.5,18.5,19.5} 
		\vertex[fill, minimum size=3pt] at (0.5,\y) {};
	\foreach \y in {12.5,13.5,14.5,15.5,16.5,17.5,18.5,19.5} 
		\vertex[fill, minimum size=3pt] at (1.5,\y) {};
	\foreach \y in {17.5,18.5,19.5} 
		\vertex[fill, minimum size=3pt] at (2.5,\y) {};
	\foreach \y in {18.5,19.5,20.5} 
		\vertex[fill, minimum size=3pt] at (3.5,\y) {};
	\foreach \y in {19.5,20.5}					
		\vertex[fill, minimum size=3pt] at (4.5,\y) {};
	\vertex[fill, minimum size=3pt] at (5.5,20.5) {};
	\vertex[fill, minimum size=3pt] at (6.5,21.5) {};	
	\draw (4.5,19.5)--(3.5,18.5)--(2.5,18.5);
	\draw (2.5,17.5)--(1.5,12.5)--(0.5,6.5);	
						
								
	\draw[very thick, color=red] (3,20)--(3,21)--(5,21)--(6,21)--(6,22)--(8,22);
	\node at (2.8,20.5) {\tiny\textcolor{red}{$2$}};
	\node at (5.8,21.5) {\tiny\textcolor{red}{$1$}};									
	
	\node at (-1,22) {\tiny\textcolor{black!20}{$i=0$}};
	\node at (-1,21) {\tiny\textcolor{black!20}{$i=1$}};	
	\node at (-1,20) {\tiny$i=2$};
	\node at (-1,19) {\tiny\textcolor{black!20}{$i=3$}};
	\node at (-1,18) {\tiny\textcolor{black!20}{$i=4$}};
	\node at (-1,17) {\tiny\textcolor{black!20}{$i=5$}};			
	\node at (2.5,22.5) {\tiny$k=3$};
	\node at (-1,0) {\tiny$(0,0)$};
	\node at (9.2,22) {\tiny$(8,22)$};		
\end{scope}

\begin{scope}[xshift=340, yshift=550]
	\node[label=above:$\Theta$] at (0,0) {$\longrightarrow$};
\end{scope}

\begin{scope}[xshift=340]

	\draw[fill, color=gray!85] (3,17) rectangle (4,18);
	\draw[fill, color=gray!85] (4,18) rectangle (5,19);
	\draw[fill, color=gray!85] (5,19) rectangle (6,20);
	\draw[fill, color=gray!85] (6,20) rectangle (7,21);
	\draw[fill, color=gray!85] (7,21) rectangle (8,22);	
	\draw[very thin, color=gray!100] (3,17) grid (8,22);	

								
	\draw[very thick, color=red] (5,21)--(6,21)--(6,22)--(8,22);	
	\draw[very thick, color=cyan] (5,20)--(5,21);
	\draw[very thick, color=red] (3,19)--(3,20)--(5,20);
	\draw[very thick, color=cyan] (3,17)--(3,19);			

	\node at (5.8,21.5) {\tiny\textcolor{red}{$1$}};
	\node at (4.8,20.5) {\tiny\textcolor{cyan}{$\overline{0}$}};		
	\node at (2.8,19.5) {\tiny\textcolor{red}{$2$}};
	\node at (2.8,18.5) {\tiny\textcolor{cyan}{$\overline{0}$}};
	\node at (2.8,17.5) {\tiny\textcolor{cyan}{$\overline{2}$}};
	
	\node at (5.5,15.5) {$\mathbf{pp} = \{1\} \times \emptyset \times\{1,3\} \times (4,1)$};

\end{scope}
\end{tikzpicture}
\caption{On the left is a truncated unified diagram $U=(\bq,\kappa,\Gamma) \in \calU_G^{(3,2)}$ for $G=\car{10}3$, and on the right is its corresponding $3$-multi-labeled Dyck path $M$ under the bijection $\Theta: \calU_G^{(3,2)} \rightarrow \calT_3(3,2)$. $M$ encodes the parking preferences $\mathbf{pp}$.}
\label{fig.3-caracol_ud_103}
\end{figure}

\begin{defn}
For each truncated unified diagram $U=(\bq,\kappa, \Gamma)\in \calU_G^{(k,i)}$, let $S(U)$ be the number of ways to complete $U$ to obtain a standardized unified diagram $((\bp,\pi),(\bq,\kappa), \Gamma) \in \mathcal{SU}_G^{(k,i)}$.
\end{defn}

To be clear, a completion $(\bp,\pi)$ is a labeled $(t_1,\ldots, t_k)$-Dyck path $\bp$ from $(0,0)$ to $(k, m-n-i)$ whose last step is the east step $(k-1, m-n-i)$ to $(k, m-n-i)$, the label $\pi\in \fS_{m-n-i}$, and $\Gamma$ is contained in the region between $\bp$ and the shaded $\bt$-region. 
We then have
\begin{equation}\label{eqn.SUcompletion}
\left|\, \mathcal{SU}_{G}^{(k,i)}\right| = \sum_{U\in \,\calU_{G}^{(k,i)}} S(U). 
\end{equation}

\subsection{Completions of truncated unified diagrams for the $k$-caracol graph}
In the remainder of this section, we let $G=\car{n+1}{k}$.


\begin{defn}
Let $U= (\bq,\kappa, \Gamma)\in \calU_{G}^{(k,i)}$ be a truncated unified diagram for $G$, with the gravity diagram drawn so that its line segments occupy the lowest possible dots in each column.  The {\em $k$-hull} of $U$ is the weak composition $\bc=(c_1,\ldots, c_k)\vDash m-n-i$ which represents the shape of the $(t_1,\ldots, t_k)$-Dyck path $\bp=N^{c_1}E \cdots N^{c_k}E$ from $(0,0)$ to $(k, m-n-i)$ having the smallest possible area.
\end{defn}

Recall from Remark~\ref{rem.trapezoid} that without loss of generality, we can consider out-degree gravity diagrams for $\car{n+1}{k}$ to be defined on a trapezoidal array of dots with $k-1+i$ dots in the $i$-th row for $i=1,\ldots, n-k-1$.  
In particular, the columns of the gravity diagram indexed by $\alpha_1,\ldots, \alpha_k$ form a $(n-k-1)\times k$ rectangle $R$.  Given $\Gamma \in \outgrav_G(\bv_{\mathrm{out}})$, let $\Gamma|_R$ denote the restriction of the gravity diagram to the dots in $R$. Note that every line segment of $\Gamma|_R$ has its right endpoint in the $k$-th column.

\begin{lemma}\label{lem.cvector}
Let $G=\car{n+1}{k}$, and let $U=(\bq,\kappa,\Gamma)\in \calU_{G}^{(k,i)}$ be a truncated unified diagram for $G$.  Let $L_1,\ldots, L_{n-k-1-i}$ be the (possibly trivial) line segments of $\Gamma|_R$, where $L_j=[\ell_j, k]$ for $1\leq \ell_j \leq k$ and $j=1,\ldots, n-k-1-i$.  Let $\bh=(n-k,\ldots, n-k, 2(n-k-1)-i)$.  The $k$-hull of $U$ is
$$\bc(U) = \bh + \sum_{j=1}^{n-k-1-i} (\be_{\ell_j} - \be_k). $$
\end{lemma}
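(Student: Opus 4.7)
The plan is to characterize the $k$-hull as the unique smallest-area $(t_1,\ldots,t_k)$-Dyck path whose interior accommodates the gravity diagram $\Gamma|_R$ drawn with its line segments pushed to the lowest available dots, and then solve the resulting extremal problem coordinate by coordinate. First I will translate the data of $\Gamma|_R$ into column occupation counts: because each segment $L_p=[\ell_p,k]$ occupies exactly one dot in every column $\ell_p,\ell_p+1,\ldots,k$, and gravity forces it into the lowest available row of each, the number of dots of $\Gamma|_R$ occupying column $j$ is $D_j:=\#\{p:\ell_p\leq j\}$ for $j=1,\ldots,k$. By construction $D_k=n-k-1-i$, matching the prescribed number of dots in the $k$-th column.

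Next I will set up and solve the extremal system. A $(t_1,\ldots,t_k)$-Dyck path $\bp=N^{c_1}E\cdots N^{c_k}E$ ending at $(k,m-n-i)$ encloses $\Gamma|_R$ if and only if $c_1+\cdots+c_j\geq t_1+\cdots+t_j+D_j$ for every $j\leq k$; combined with the endpoint equality $c_1+\cdots+c_k=m-n-i$, the minimum-area solution is obtained by forcing equality for each $j<k$. This gives $c_j=t_j+D_j-D_{j-1}=t_j+\#\{p:\ell_p=j\}$ for $j<k$, and $c_k$ is then determined by the endpoint constraint. A short substitution using $t_j=n-k$ for $j<k$, $t_k=n-k-1$, and $m-n=(k+1)(n-k)-2$ will rewrite these coordinates as precisely the $j$-th entries of $\bh+\sum_p(\be_{\ell_p}-\be_k)$, both in the range $j<k$ and in the case $j=k$, completing the identification.

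The main obstacle I anticipate is justifying that the constraint governing the hull involves exactly the essential dot counts $D_j$ coming from $\Gamma|_R$, and nothing more. In particular, one must argue that trivial dots which a chosen completion $\bp$ would later introduce in columns $1,\ldots,k-1$, as well as segments of $\Gamma$ extending past column $k$ (which live outside $R$), do not tighten the inequality. I will handle this by observing that under gravity the top occupied cell of $\Gamma|_R$ in column $j$ sits at height $t_1+\cdots+t_j+D_j$, so any enclosing Dyck path must lie weakly above this essential profile, and the profile itself depends only on $(\ell_1,\ldots,\ell_{n-k-1-i})$. As a final sanity check, the identity $c_1+\cdots+c_k=t_1+\cdots+t_k+D_k=m-n-i$ confirms that $\bc(U)$ is a weak composition of $m-n-i$, hence a bona fide $(t_1,\ldots,t_k)$-Dyck path from $(0,0)$ to $(k,m-n-i)$.
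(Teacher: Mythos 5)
Your proposal is correct and follows essentially the same route as the paper: the paper's (much terser) proof likewise takes $\bh$ as the hull of the empty diagram and observes that each segment $L_j=[\ell_j,k]$ perturbs it by $\be_{\ell_j}-\be_k$, which is exactly what your column-count bookkeeping $c_j=t_j+\#\{p:\ell_p=j\}$ (for $j<k$) plus the endpoint constraint produces. Your explicit extremal formulation via the partial-sum inequalities $c_1+\cdots+c_j\geq t_1+\cdots+t_j+D_j$, and your remark that trivial dots and the portions of segments beyond column $k$ impose no further constraints, are correct and simply make precise what the paper leaves implicit.
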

\begin{proof}
Recalling from Section~\ref{sec.gravity} that the shifted out-degree vector for $\car{n+1}{k}$ is 
$$\bt=(t_1,\ldots, t_n) = (\underbrace{n-k,\ldots, n-k}_{k-1},n-k-1, \underbrace{1,\ldots,1}_{n-k-1},0)\vDash m-n,$$
then $\bh=(h_1,\ldots, h_k) =(n-k,\ldots, n-k, 2(n-k-1)-i)$ is a composition of $m-n-i$ with $k$ parts that represents the hull of a truncated level-$(k,i)$ unified diagram having an empty gravity diagram.
Now, with the gravity diagram $\Gamma$ embedded into $U$, then $\bc(U)$ is determined by $\bh$, together with the line segments of $\Gamma|_R$. For each line segment $L_j = [\ell_j,k]$ beginning in the $\ell_j$-th column and ending in the $k$-th column, the $k$-hull of the truncated unified diagram is obtained by altering $\bh$ by $\be_{\ell_j}-\be_k$.
\end{proof}

\begin{example}
For the truncated unified diagram $U$ in Figure~\ref{fig.3-caracol_ud_103}, its gravity diagram $\Gamma$ is
$$\begin{tikzpicture}[scale=0.5]
\begin{scope}[xshift=0, scale=1.0]

	\vertex[fill, minimum size=3pt, label=above:\tiny$\alpha_1$, color=red](a10) at (0,0) {}; 	
	\vertex[fill, minimum size=3pt, label=above:\tiny$\alpha_2$, color=red](a20) at (1,0) {}; 
	\vertex[fill, minimum size=3pt, label=above:\tiny$\alpha_3$, color=red](a30) at (2,0) {};
	\vertex[fill, minimum size=3pt, label=above:\tiny$\alpha_4$](a40) at (3,0) {};
	\vertex[fill, minimum size=3pt, label=above:\tiny$\alpha_5$](a50) at (4,0) {};
	\vertex[fill, minimum size=3pt, label=above:\tiny$\alpha_6$](a60) at (5,0) {};
	\vertex[fill, minimum size=3pt, label=above:\tiny$\alpha_7$](a70) at (6,0) {};
	\vertex[fill, minimum size=3pt, label=above:\tiny$\alpha_8$](a80) at (7,0) {};
	\vertex[fill, minimum size=3pt, label=above:\tiny$\alpha_9$](a90) at (8,0) {};		
	\vertex[fill, minimum size=3pt, color=red](a11) at (0,-1) {}; 
	\vertex[fill, minimum size=3pt, color=red](a12) at (0,-2) {}; 
	\vertex[fill, minimum size=3pt, color=red](a21) at (1,-1) {}; 
	\vertex[fill, minimum size=3pt, color=red](a22) at (1,-2) {}; 
	\vertex[fill, minimum size=3pt, color=red](a31) at (2,-1) {}; 
	\vertex[fill, minimum size=3pt, color=red](a32) at (2,-2) {}; 
	\vertex[fill, minimum size=3pt](a41) at (3,-1) {}; 
	\vertex[fill, minimum size=3pt](a42) at (3,-2) {}; 
	\vertex[fill, minimum size=3pt](a51) at (4,-1) {}; 
	\vertex[fill, minimum size=3pt](a52) at (4,-2) {}; 
	\vertex[fill, minimum size=3pt](a61) at (5,-1) {}; 
	\vertex[fill, minimum size=3pt](a62) at (5,-2) {}; 
	\vertex[fill, minimum size=3pt](a71) at (6,-1) {}; 
	\vertex[fill, minimum size=3pt](a72) at (6,-2) {};
	\vertex[fill, minimum size=3pt](a81) at (7,-1) {};
	\draw (a30) to (a50);
	\draw[color=red] (a11) to (a31);		
\end{scope}
\end{tikzpicture}
$$
with the restriction $\Gamma|_R$ depicted in red. We have $\bh= (6,6,8)$, and the $3$-hull of $U$ is
$$\bc(U) = (6,6,8) + (0,0,1-1) + (1,0,-1) + (0,0,1-1) = (7,6,7).$$
\end{example}


\begin{lemma}\label{lem.completions} Let $G=\car{n+1}{k}$, and let $\bc(U)=(c_1,\ldots, c_k)$ be the $k$-hull of the truncated unified diagram $U=(\bq,\kappa,\Gamma)\in \calU_{G}^{(k,i)}$. The number of ways to complete $U$ to a standardized unified diagram in $\mathcal{SU}_G^{(k,i)}$ is
$$S(U) = \sum_{\bd \in\calC(\bc(U))} {m-n-i\choose \bd},$$
where $\calC(\bc(U)) = \{\bd\vDash m-n-i \mid d_1 + \cdots +d_j \geq c_1+\cdots+c_j, \hbox{ for } j=1,\ldots, k-1 \}$.
\end{lemma}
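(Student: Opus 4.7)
The plan is to decompose the data of a completion into its independent choices and count each.

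A completion of $U=(\bq,\kappa,\Gamma)$ to a standardized unified diagram in $\mathcal{SU}_G^{(k,i)}$ consists of (i) a $(t_1,\ldots,t_k)$-Dyck path $\bp = N^{d_1}E\cdots N^{d_k}E$ from $(0,0)$ to $(k,m-n-i)$, together with (ii) a labeling $\pi\in\fS_{m-n-i}$ whose descent set is contained in $\{d_1+\cdots+d_j \mid j=1,\ldots,k-1\}$, subject to the constraint that $\Gamma$ fits in the region between $\bp$ and the shaded $\bt$-region. The weak composition $\bd=(d_1,\ldots,d_k)\vDash m-n-i$ encodes the shape of $\bp$, and so the first step is to characterize exactly which $\bd$ arise.

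The key observation is that $\bc(U)$ is, by its definition, the shape of the smallest-area $(t_1,\ldots,t_k)$-Dyck path that contains $\Gamma|_R$ (and hence $\Gamma$, since the line segments of $\Gamma$ outside $R$ live in columns indexed by $\alpha_k,\ldots,\alpha_{n-2}$ which are to the right of the $k$-th column). Therefore $\bp$ contains $\Gamma$ if and only if its partial sums weakly dominate those of $\bc(U)$, that is,
\[
d_1+\cdots+d_j \geq c_1+\cdots+c_j \qquad\text{for } j=1,\ldots,k-1,
\]
with the case $j=k$ automatic since $|\bd|=|\bc(U)|=m-n-i$. Since $\bc(U)$ is itself a valid $(t_1,\ldots,t_k)$-Dyck shape (as Lemma~\ref{lem.cvector} shows it comes from a legitimate embedded diagram), any such $\bd$ automatically dominates $(t_1,\ldots,t_k)$, so the Dyck condition for $\bp$ is subsumed. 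Thus the admissible shapes $\bd$ are exactly the elements of $\calC(\bc(U))$.

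For a fixed $\bd\in\calC(\bc(U))$, the labelings $\pi\in\fS_{m-n-i}$ whose descent set is contained in $\{d_1+\cdots+d_j\}_{j=1}^{k-1}$ are precisely those obtained by partitioning $\{1,\ldots,m-n-i\}$ into $k$ ordered blocks of sizes $d_1,\ldots,d_k$ and arranging each block in increasing order along its run of north steps; the number of such labelings is $\binom{m-n-i}{d_1,\ldots,d_k}=\binom{m-n-i}{\bd}$. Summing over the admissible shapes gives
\[
S(U) = \sum_{\bd\in\calC(\bc(U))} \binom{m-n-i}{\bd},
\]
as claimed. The only nontrivial step is step two (identifying $\calC(\bc(U))$), which reduces entirely to the minimality property built into the definition of the $k$-hull; the remainder is routine bookkeeping.
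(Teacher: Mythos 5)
Your proof is correct and follows essentially the same route as the paper's: identify the admissible completion shapes as the weak compositions dominating the $k$-hull $\bc(U)$ (which is by definition the minimal-area completing shape), and count the labelings of each shape $\bd$ by the multinomial coefficient $\binom{m-n-i}{\bd}$. You spell out a couple of points the paper leaves implicit (that the $j=k$ inequality and the domination of $(t_1,\ldots,t_k)$ are automatic), but the argument is the same.
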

\begin{proof} The $k$-hull $\bc(U)$ of $\Gamma$ represents a $(t_1,\ldots, t_k)$-Dyck path $(\bp,\pi)$ having the smallest area which completes $U$ to a standardized unified diagram.  Thus a Dyck path completion of $U$ is a weak composition $\bd$ that dominates $\bc(U)$, and the claim follows since there are ${m-n-i\choose \bd}$ ways to label the north steps of $\bd$ so that the labels from $\pi$ are nondecreasing on consecutive north steps.
\end{proof}

\subsection{The $k$-parking numbers}

In this section, we enumerate the truncated level-$(k,i)$ unified diagrams for $G=\car{n+1}{k}$ by bijecting them to another family of combinatorial objects that we now define.  After this is completed, we will show that for each truncated diagram $U \in \calU_G^{(k,i)}$, there are `on average' $k^{(k+1)(n-k)-3-i}$ ways to complete it to a standardized unified diagram.

\begin{defn}\label{defn.kparkingtrianglenumbers}
For $k\in \bbN$, $r\in \bbZ_{\geq0}$, and $i=0,\ldots, r$, let
$$T_k(r,i) = (r+1)^{i-1} \multiset{k(r+1)}{r-i} 
	= (r+1)^{i-1}{k(r+1)+r-1-i \choose r-i}.$$
For fixed $k$, the numbers $T_k(r,i)$ form the entries of the {\em $k$-parking triangle}.  Tables of values for $T_k(r,i)$ are given in the Appendix, for $k=1,2,3,4$.
\end{defn}

\begin{remark} We note some special values of $T_k(r,i)$.
\begin{enumerate}
\item[(a)] At $i=0$, 
$$T_k(r,0) = \frac{1}{r+1}{k(r+1) + r-1 \choose r} 
	= \Cat(r+1, k(r+1)-1)$$
is a generalized Fuss-Catalan number. This is equal to $\vol\calF_{\car{n+1}{k}}(1,0,\ldots,0,-1)$ if we let $r=n-k-1$.

\item[(b)] At $i=r$,
$T_k(r,r) = (r+1)^{r-1}$
is the number of parking functions of length $r$.

\item[(c)] At $i=r-1$,
$$T_k(r,r-1) = (r+1)^{r-2}{k(r+1)\choose k(r+1)-1} = k(r+1)^{r-1} =kT_k(r,r)$$
is $k$ times the number of parking functions of length $r$.
\end{enumerate}
\end{remark}

\begin{defn}
For $k\in \bbN$, $r\in \bbZ_{\geq0}$, and $i=0,\ldots, r$, let $\calT_k(r,i)$ be the set of classical Dyck paths from $(0,0)$ to $(r,r)$ with labeled north steps so that each of the labels from the set $\{1,2,\ldots, i\}$ appear exactly once, and the remaining $r-i$ labels are chosen (possibly with repeats) from the set $\{\overline{k-1}, \ldots, \overline{1}, \overline{0}\}$, and the labels are nondecreasing on consecutive north steps. These labels are ordered by $\overline{k-1} < \cdots < \overline{1} < \overline{0} < 1 < 2< \cdots < i$.  We call these the {\em $k$-multi-labeled Dyck paths}.
\end{defn}

\begin{theorem}\label{thm.multilabel}
For $k\in \bbN$, $r\in \bbZ_{\geq0}$, and $i=0,\ldots, r$,
$$|\calT_k(r,i)|= T_k(r,i). $$
\end{theorem}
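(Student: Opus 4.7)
The plan is to prove $|\calT_k(r,i)| = T_k(r,i)$ via a Pollak-style cycle-lemma argument. The shape of the formula
$$T_k(r,i) = \frac{1}{r+1}\,(r+1)^i\,\multiset{k(r+1)}{r-i}$$
strongly suggests building a free action of $\bbZ/(r+1)\bbZ$ on an auxiliary set of size $(r+1)^i\multiset{k(r+1)}{r-i}$ whose orbits are in bijection with $\calT_k(r,i)$.

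First I would encode each $k$-multi-labeled Dyck path $T\in\calT_k(r,i)$ by its \emph{parking data}: the column $c_j\in\{1,\ldots,r\}$ of the north step bearing each unbarred label $j\in\{1,\ldots,i\}$, together with a multiset $M$ of size $r-i$ drawn from $\{1,\ldots,r\}\times\{\overline{0},\ldots,\overline{k-1}\}$ recording the column and bar-value of each barred north step. Because the total ordering $\overline{k-1}<\cdots<\overline{0}<1<\cdots<i$ forces the labels within each column into a unique nondecreasing arrangement, this data determines $T$ completely, subject only to the Dyck condition $s_1+\cdots+s_t\geq t$ for $t=1,\ldots,r$, where $s_j$ is the total number of column preferences (from the $c_j$'s and from $M$) equal to $j$.

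Next I would enlarge the column range from $\{1,\ldots,r\}$ to $\bbZ/(r+1)\bbZ$, obtaining an auxiliary set $\calX$ with $|\calX|=(r+1)^i\multiset{k(r+1)}{r-i}$, on which $\bbZ/(r+1)\bbZ$ acts by simultaneously translating every column coordinate by $+1$. The Dvoretzky--Motzkin cycle lemma, applied to the integer sequence $(s_1-1,\ldots,s_{r+1}-1)$ (whose terms are $\geq -1$ and sum to $-1$), then yields that each orbit contains exactly one rotation with all partial sums nonnegative on $\{1,\ldots,r\}$. For that rotation the final term $s_{r+1}-1=-1$ forces $s_{r+1}=0$, so all column preferences lie in $\{1,\ldots,r\}$ and the Dyck condition holds; conversely, every Dyck representative arises this way.

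The hard part will be verifying that every orbit of the $\bbZ/(r+1)\bbZ$-action has size exactly $r+1$, so that $|\calT_k(r,i)|=|\calX|/(r+1)$ holds without a correction term. For $i\geq 1$ this is immediate, since no nontrivial shift can fix the single coordinate $c_1\in\bbZ/(r+1)\bbZ$. For $i=0$, any stabilizer of order $d\mid(r+1)$ with $d<r+1$ forces $M$ to decompose into translation orbits of size $(r+1)/d$, so $(r+1)/d$ must divide both $|M|=r$ and $r+1$; coprimality of $r$ and $r+1$ then forces $(r+1)/d=1$, a contradiction. With freeness established, dividing $|\calX|$ by $r+1$ yields $(r+1)^{i-1}\multiset{k(r+1)}{r-i}=T_k(r,i)$, completing the proof.
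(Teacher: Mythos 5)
Your proposal is correct and is essentially the paper's own argument in different clothing: the paper realizes the same auxiliary set of size $(r+1)^i\multiset{k(r+1)}{r-i}$ as parking preferences of $i$ distinct cars and $r-i$ motorcycles of $k$ models on a circular street with $r+1$ spaces, and uses the Pollak-style observation that shifting all preferences by $z$ shifts the unique empty space by $z$ (so each $\bbZ/(r+1)\bbZ$-orbit is free and contains exactly one representative leaving space $r+1$ empty), which is precisely your Dvoretzky--Motzkin step. The one cosmetic quibble is in your $i=0$ freeness check, where a stabilizer of order $d$ partitions the column coordinates into orbits of size $d$ rather than $(r+1)/d$; either way the conclusion $d\mid\gcd(r,r+1)=1$ is unaffected.
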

\begin{proof}
Consider the scenario where there are $r+1$ parking spaces on a circular one way street whose single entrance/exit is just before the first parking space. There are $r$ vehicles: $d_s$ identical motorcycles of the same model $s$ for $s=\overline{k-1},\ldots, \overline{0}$, and $i$ distinct cars, so that $d_{k-1}+\cdots+d_{1}+d_{0}+i=r$. Each group of model $s$ motorcycles has a multiset of $d_s$ preferred parking spaces, and each car has a preferred parking space as well. The motorcycles arrive in groups and park, followed by each car, and if the vehicle's preferred spot is already taken, then it parks in the next available space down the circular street.  Since there are $r+1$ spaces and $r$ vehicles, every vehicle will be able to park.  

We record the parking preferences as
$$\mathbf{pp}= \{p_{k-1,1},\ldots, p_{k-1,d_{k-1}} \} \times \cdots \times  \{p_{0,1},\ldots, p_{0,d_0}\}\times (q_1,\ldots, q_i),$$
where $\{p_{s,1},\ldots, p_{s,d_s} \}$ is a multiset of parking space preferences for the model $s$ motorcycles, and $(q_1,\ldots, q_i)$ is the list of parking preferences for the $i$ cars.
The cyclic group $\bbZ/(r+1)\bbZ$ acts on the set of parking preferences by 
$$z\cdot \mathbf{pp} =  \{p_{k-1,1}+z,\ldots, p_{k-1,d_{k-1}}+z \} \times \cdots \times \{p_{0,1}+z,\ldots, p_{0,d_0}+z\} \times (q_1+z,\ldots, q_i+z) \!\!\! \mod r+1,$$
for $z\in \bbZ/(r+1)\bbZ$. If the parking preferences $\mathbf{pp}$ lead to the $j$-th vehicle parking in space $S_j$, then the parking preferences $z\cdot \mathbf{pp}$ leads to the $j$-th vehicle parking in space $(S_j+z) \mod r+1$. Thus each orbit of the cyclic group action on the set of parking preferences has size $r+1$. In each orbit, there is a unique parking configuration where the $(r+1)$-st space is empty, and this corresponds to an element in $\calT_k(r,i)$.

There are $(r+1)^i$ preference lists $(q_1,\ldots, q_i)$ for the cars, and
$$\sum_{d_0+\cdots+d_{k-1} =r-i }\multiset{r+1}{d_0}\cdots\multiset{r+1}{d_{k-1}} 
= \multiset{k(r+1)}{r-i}$$
preference sets for the $k$ models of motorcycles. Therefore, 
$$|\calT_k(r,i)|
= (r+1)^{i-1} \multiset{k(r+1)}{r-i}.$$
\end{proof}

\begin{example}
The right side of Figure~\ref{fig.3-caracol_ud_103} shows a multi-labeled Dyck path $M$.   $M$ encodes the parking preferences $\mathbf{pp} = \{1 \} \times \emptyset \times\{1,3\} \times (4,1)$ for one model-$2$ motorcycle $\mathtt{M}_{\overline{2}}$, two identical model-$0$ motorcycles $\mathtt{M}_{\overline{0}}$, and two distinct cars $\mathtt{C}_1$ and $\mathtt{C}_2$. The resulting parked configuration is $(\mathtt{M}_{\overline{2}}, \mathtt{M}_{\overline{0}},\mathtt{M}_{\overline{0}}, \mathtt{C}_1, \mathtt{C}_2)$ for the vehicles.
\end{example}

\begin{theorem} \label{thm.theta}
Let $k\in \bbN$ and $n>k$. The number of truncated level-$(k,i)$ unified diagrams for $G=\car{n+1}{k}$ is
$$\left|\,\calU_{\car{n+1}{k}}^{(k,i)}\right| 
	= T_k(n-k-1,i).$$
\end{theorem}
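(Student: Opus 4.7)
The strategy is to establish an explicit bijection
$$\Theta\colon \calU_{\car{n+1}{k}}^{(k,i)} \longrightarrow \calT_k(n-k-1,i)$$
between truncated level-$(k,i)$ unified diagrams of $G=\car{n+1}{k}$ and $k$-multi-labeled Dyck paths of size $r:=n-k-1$ with $i$ unbarred labels. Combined with Theorem~\ref{thm.multilabel} (which gives $|\calT_k(r,i)|=T_k(r,i)$), this yields the identity $|\calU_G^{(k,i)}|=T_k(n-k-1,i)$ at once.

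For $U=(\bq,\kappa,\Gamma)\in \calU_G^{(k,i)}$ I would decompose the data into three streams tailored to the target. By Lemma~\ref{lem.cvector}, the $k$-th column of $\Gamma$ holds exactly $r-i$ dots, each the right endpoint of a horizontal segment $L_j=[\ell_j,k]\subset\Gamma|_R$ with $\ell_j\in\{1,\ldots,k\}$. I would convert each such segment of length $k-\ell_j$ into the barred label $\overline{k-\ell_j}\in\{\overline{0},\ldots,\overline{k-1}\}$, producing a multiset of $r-i$ barred labels. Independently, $\kappa\in\fS_i$ supplies the $i$ distinct unbarred labels $\{1,\ldots,i\}$, attached to the north steps of $\bq$. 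The shape of $M:=\Theta(U)$ is built from $\bq$ (after deletion of its final east step, which is trivial since $t_n=0$) by inserting $r-i$ extra north steps corresponding to the $L_j$'s, positioned so that the barred labels occupy rows forced by the gravity convention on $\Gamma|_R$. Under the ordering $\overline{k-1}<\cdots<\overline{0}<1<\cdots<i$, placing longer segments higher in $\Gamma|_R$ translates directly to the non-decreasing label condition on consecutive north steps of $M$.

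The verification splits into three steps: (i)~$M$ stays weakly above the main diagonal of the $r\times r$ grid, which I would derive from the dominance of $\bq$ over $(t_{k+1},\dots,t_n)$ combined with the column-count restrictions on $\Gamma$; (ii)~the label sequence of $M$ is non-decreasing on consecutive north steps, which is immediate from the out-degree gravity convention and from the labeling rule on $\kappa$; and (iii)~$\Theta$ admits an inverse. For the inverse, given $M\in\calT_k(r,i)$ one reads off the multiset of barred labels to reconstruct the multiset of lengths of the $L_j$'s (hence $\Gamma|_R$ uniquely, by the gravity convention), reads off the unbarred labels in order of increasing height to recover $\kappa$, and recovers $\bq$ from the east-step pattern of $M$ interleaved with the positions of the unbarred labels; the trivial dots in the trapezoidal columns of $\Gamma$ are then forced by the column counts dictated by $\bq$.

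The main obstacle is the precise bookkeeping needed to show that the insertion procedure for the barred north steps is unambiguous and that $M$ indeed remains a Dyck path. In particular, one must verify that the interaction between the gravity ordering on $\Gamma|_R$ and the Dyck-path shape of $\bq$ leaves room to insert the barred north steps without creating any violation of the diagonal constraint, and that no two distinct triples $(\bq,\kappa,\Gamma)$ can produce the same $M$. Once this bookkeeping is settled and $\Theta$ is confirmed to be a bijection, the theorem follows by invoking Theorem~\ref{thm.multilabel}.
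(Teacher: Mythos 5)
Your overall strategy coincides with the paper's: build a bijection $\Theta$ onto the $k$-multi-labeled Dyck paths $\calT_k(n-k-1,i)$ and invoke Theorem~\ref{thm.multilabel}. However, your construction of $\Theta$ has a genuine gap: you extract from the gravity diagram only the data of $\Gamma|_R$, i.e.\ the left endpoints $\ell_j$ of the segments $[\ell_j,k]$, and convert these into the multiset of barred labels. That multiset does not determine where in the Dyck path the $r-i$ barred north steps must be inserted, and the phrase ``positioned so that the barred labels occupy rows forced by the gravity convention on $\Gamma|_R$'' is not a well-defined rule: since every barred label is smaller than every unbarred one, a barred north step could be placed at the bottom of the run of north steps in \emph{any} column without violating the non-decreasing condition, and different choices give different elements of $\calT_k(r,i)$. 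The missing ingredient is the \emph{right} endpoint of each full line segment: in $\car{n+1}{k}$ a segment has the form $L=[\ell,k+h]$ with $k\le k+h\le n-2$ (Lemma~\ref{lem.lineproperties}), and it is precisely $h$ that dictates the column $x=h$ in which the new north step carrying $\overline{k-\ell}$ is created (the paper's ``sliding the label along the segment to its end''). Your inverse map has the mirror-image defect: from the multiset of barred labels you can only recover $\Gamma|_R$, and the portion of $\Gamma$ in the columns $\alpha_{k+1},\ldots,\alpha_{n-2}$ is \emph{not} ``forced by the column counts dictated by $\bq$'' --- it is genuine data recording how far each segment extends to the right, and it is encoded exactly by the $x$-coordinates of the barred north steps of $M$.

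A quick count makes the failure concrete: for $G=\car62$, $i=0$, one has $r=2$ and $|\calU_G^{(2,0)}|=T_2(2,0)=7$ (the seven gravity diagrams of Figure~\ref{fig.car62}), whereas the data you propose to record --- a multiset of two labels from $\{\overline0,\overline1\}$ --- takes only $3$ values. Once you amend the map so that each segment $[\ell,k+h]$ contributes a north step at $x=h$ with label $\overline{k-\ell}$, parts (i)--(iii) of your verification go through essentially as in the paper (the diagonal condition follows because the segments are embedded between $\bq$ and the shaded region), and the theorem follows from Theorem~\ref{thm.multilabel} as you state.
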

\begin{proof} Let $G=\car{n+1}{k}$. 
We construct a bijection $\Theta: \calU_G^{(k,i)} \rightarrow \calT_k(n-k-1,i)$.

Let $U=(\bq, \kappa, \Gamma)$ be a truncated level-$(k,i)$ unified diagram.  Recall that the embedded gravity diagram $\Gamma$ has $n-k-1-i$ dots in the $k$-th column, and every (possibly trivial) line segment in $\Gamma$ contains a dot from the $k$-th column, so we consider $\Gamma$ as having $n-k-1-i$ line segments.

From $U$, we create a $k$-multi-labeled Dyck path $M\in\calT_k(n-k-1,i)$ in the following way.
Let $\mathbf{1} = (1,\ldots, 1) \vDash n-k-1$. We may view $(\bq,\kappa)$ as a labeled $\mathbf{1}$-Dyck path with starting point $(0, n-k-1-i)$, and $\bq$ has $i$ north steps labeled by the permutation $\kappa\in \fS_i$.  To create $M$, we need to add $n-k-1-i$ more north steps to $\bq$, and the $n-k-1-i$ line segments embedded between $\bq$ and the shaded region in $U$ define these uniquely; given one such line segment $L=[\ell,k+h]$ that begins in the $\ell$-th column for some $\ell=1,\ldots, k$, and ends in a $(k+h)$-th column for some $h = 0,\ldots, n-k-2$, create a new north step at $x=h$ with the label $\overline{k-\ell}$ so that the labels remain nondecreasing on consecutive north steps of $M$, with respect to the order $\overline{k-1} < \cdots < \overline{1} < \overline{0} <1<\cdots <i$. 

We may visualize this construction of $M$ from $U$ as `sliding' the label $\overline{k-\ell}$ along the line segment $L=[\ell,k+h]$ of the gravity diagram to its end to create a new north step with that label.

To see that $M$ indeed is a $k$-multi-labeled Dyck path in $\calT_k(n-k-1,i)$, note that by virtue of the fact that the line segments of $\Gamma$ are embedded between $\bq$ and the shaded region, it is ensured that adding north steps dicted by the right endpoints of the line segments creates a Dyck path from $(0,0)$ to $(n-k-1,n-k-1)$ that remains above the line $y=x$.  The conditions on the labels of the north steps of $M$ are clearly satisfied by construction.

To see that $\Theta$ is a bijection, we describe the inverse construction. Let $M\in \calT_k(n-k-1,i)$.  It has $n-k-1-i$ north steps with labels in $\{\overline{k-1},\ldots, \overline{1},\overline{0} \}$, so by removing those, we can recover the labeled Dyck path $(\bq,\kappa)$ with $\kappa\in \fS_i$. It remains to recover the embedded gravity diagram $\Gamma$, but this is easy as well, since each north step with label $\overline{k-\ell}$ at $x=h$ gives rise to a line segment $L=[\ell,k+h]$.

Since $\Theta$ is a bijection, then the result follows from Theorem~\ref{thm.multilabel}.
\end{proof}

\begin{example} Figure~\ref{fig.3-caracol_ud_103} shows a truncated unified diagram $U=(\bq,\kappa,\Gamma) \in \calU_G^{(3,2)}$ for $G=\car{10}3$, and its corresponding $3$-multi-labeled Dyck path $M$ under the bijection $\Theta: \calU_G^{(3,2)} \rightarrow \calT_3(3,2)$. Note that the embedded gravity diagram $\Gamma$ contains three line segments; the two which begin in the third column carry the label $\textcolor{cyan}{\overline{0}}$ and the one which begins in the first column carries the label $\textcolor{cyan}{\overline{2}}$.  These labels `slide' along their line segments from left to right to form the $3$-multi-labeled Dyck path $M$.

\end{example}

\subsection{Partitioning the $N$-th multinomial $(k-1)$-simplex}
The main result of this section is to finish the computation of the number of level-$(k,i)$ standardized unified diagrams for $G=\car{n+1}{k}$.
We shall see in Theorem~\ref{thm.kparkingtriangle} that `on average' there are $k^{(k+1)(n-k)-3-i}$ ways to complete any truncated level-$(k,i)$ unified diagram to a standardized unified diagram, but first we need a Lemma.

\begin{lemma} \label{lem.thelemma} 
Let $N\in \bbN$ be a positive integer, and let $k\in \bbZ_{\geq2}$.   
Let $\calC(N,k)$ denote the set of weak compositions of $N$ with $k$ parts.
Given $\bc = (c_0,\ldots, c_{k-1}) \in \calC(N,k)$, and letting $\be_0=(1,0,0,\ldots)$, $\be_1 =(0,1,0,\ldots)$ etc., define 
\begin{align*}
\bc_0 &= (c_{0,0},\ldots, c_{0,k-1}) =\bc,\\
\bc_j &= (c_{j,0},\ldots, c_{j,k-1})= \bc+\be_{k-1}-\be_{j-1},
\end{align*} 
for $j=1,\ldots, k-1$. 
Let 
\begin{align*}
\calC(\bc_j) &= \left\{ \bd=(d_0,\ldots, d_{k-1}) \vDash N \mid d_j+\cdots +d_{j+i} \geq c_{j,j}+\cdots+c_{j,j+i}, \hbox{ for } i=0,\ldots, k-2 \right\},
\end{align*}
with the understanding that the indices of $d_{j+i}$ and $c_{j,j+i}$ are defined mod $k$, and $\calC(\bc_j)$ is empty if $\bc_j$ has negative entries.
Then 
$$\calC(N,k) = \coprod_{0\leq j\leq k-1} \calC(\bc_j),$$
is a partition of the set of weak compositions of $N$ with $k$ parts. 
\end{lemma}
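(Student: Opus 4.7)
The plan is to identify, for each weak composition $\bd \in \calC(N,k)$, a unique index $j \in \{0, 1, \ldots, k-1\}$ such that $\bd \in \calC(\bc_j)$, via a cycle-lemma style argument. Introduce the partial sums
\[
B_\ell = \sum_{m=0}^{\ell-1} (d_m - c_m), \qquad \ell = 0, 1, \ldots, k,
\]
so that $B_0 = B_k = 0$, since both $\bd$ and $\bc$ sum to $N$. The central claim is that $\bd \in \calC(\bc_j)$ if and only if $j$ is the \emph{smallest} index in $\{0, 1, \ldots, k-1\}$ at which the minimum of $B_0, B_1, \ldots, B_{k-1}$ is attained. Since every finite sequence has a unique such ``first minimizer,'' the disjoint-union statement will then follow at once.

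To verify the claim, I would translate the cyclic dominance conditions defining $\calC(\bc_j)$ into inequalities among the $B_\ell$. The case $j = 0$ is immediate: the conditions $d_0 + \cdots + d_i \geq c_0 + \cdots + c_i$ for $i = 0, \ldots, k-2$ become $B_\ell \geq 0$ for $\ell = 1, \ldots, k-1$, which, together with $B_0 = 0$, says exactly that $0$ is the minimum. For $j \geq 1$, the shifted composition $\bc_j$ differs from $\bc$ only in that $c_{j, k-1} = c_{k-1} + 1$ and $c_{j, j-1} = c_{j-1} - 1$. A short case analysis shows that the cyclic window $\{j, j+1, \ldots, j+i\} \pmod k$ never contains $j-1$ (for any $i \leq k-2$) and contains $k-1$ exactly when $i \geq k-1-j$. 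Subtracting $c_{j,j} + \cdots + c_{j,j+i}$ from $d_j + \cdots + d_{j+i}$ and rewriting via $B$ produces three families of inequalities,
\begin{align*}
B_m &\geq B_j \quad \text{for } j < m \leq k-1,\\
B_j &\leq -1,\\
B_m &\geq B_j + 1 \quad \text{for } 1 \leq m < j,
\end{align*}
coming respectively from the no-wrap cases without $k-1$, the no-wrap case with window ending at $k-1$, and the wrap cases (which necessarily include $k-1$). Together with $B_0 = B_k = 0$, these assert precisely that $B_j$ is the minimum of $B_0, \ldots, B_{k-1}$ and that $B_m > B_j$ strictly for every $m < j$; that is, $j$ is the first minimizer.

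With this characterization in hand, the disjointness of the $\calC(\bc_j)$ follows immediately, and every $\bd \in \calC(N, k)$ lies in exactly one $\calC(\bc_j)$, namely the one indexed by its first minimizer. The caveat that $\calC(\bc_j)$ is empty whenever $\bc_j$ has a negative entry --- occurring precisely when $j \geq 1$ and $c_{j-1} = 0$ --- is consistent with the characterization: having $j$ as the first minimizer would require $B_j < B_{j-1}$, hence $d_{j-1} - c_{j-1} = d_{j-1} < 0$, which is impossible for a weak composition. The main obstacle is the wrap-around bookkeeping: one has to track carefully how the shift $\be_{k-1} - \be_{j-1}$ defining $\bc_j$ contributes a $+1$ to the right-hand side of the dominance inequality exactly when the cyclic window crosses position $k-1$, and how this interacts with the boundary equalities $B_0 = B_k = 0$. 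Once this case analysis is set up, the combinatorial content reduces to the elementary fact that a finite sequence has a unique first minimum position, the same principle underlying the classical cycle lemma.
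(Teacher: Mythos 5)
Your proof is correct, and it takes a genuinely different route from the paper. The paper proves disjointness pairwise: it rewrites each inequality involving $d_{k-1}$ using $\sum_i d_i = N$, observes that the resulting inequality in $\calC(\bc_j)$ is the strict complement of a defining inequality of $\calC(\bc_\ell)$, and then obtains the covering statement from the fact that all the regions are cut out by the $\binom{k-1}{2}$ hyperplanes $d_a+\cdots+d_b = c_{0,a}+\cdots+c_{0,b}$ passing through $\bc_0$. You instead give a single global characterization: with $B_\ell = \sum_{m<\ell}(d_m - c_m)$, membership $\bd \in \calC(\bc_j)$ is equivalent to $j$ being the first index attaining $\min(B_0,\ldots,B_{k-1})$, so existence and uniqueness of the first minimizer yield disjointness and covering simultaneously. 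Your case analysis of the cyclic windows is accurate (the window $\{j,\ldots,j+i\}$ of size at most $k-1$ never contains $j-1$ and contains $k-1$ exactly when $i \geq k-1-j$, producing the three families of inequalities as you state, with integrality converting $B_j < B_0 = 0$ into $B_j \leq -1$), and your treatment of the degenerate case $c_{j-1}=0$ correctly reconciles the inequality description with the convention that $\calC(\bc_j)$ is declared empty. What each approach buys: yours is the cleaner and more self-contained argument, in the spirit of the cycle lemma already used in the paper's proof of Theorem~\ref{thm.multilabel}, and it dispenses with the paper's somewhat terse covering step; the paper's version makes the pairwise separating hyperplanes explicit, which supports the geometric picture of slicing the multinomial simplex illustrated in Figure~\ref{fig.trinomial_triangle}.
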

\begin{proof}
Since $d_0+\cdots+d_{k-1}=N$, then we can rewrite the $k-1$ defining inequalities for each set $\calC(\bc_j)$ in terms of $d_0,\ldots, d_{k-2}$.
That is, an inequality involving $d_{k-1}$, generically of the form 
$$d_j+\cdots + d_{k-1}+d_0 + \cdots +d_{\ell-1} \geq c_{j,j} + \cdots + c_{j,k-1+\ell} =M,$$
where $j\leq k-1$,
appears as a defining inequality only in $\calC(\bc_j)$, and it can be replaced by
$$d_{\ell} + \cdots + d_{j-1} \leq c_{j,k+\ell}+\cdots + c_{j,j-1} < N-M+1,$$
where $\ell <j$.
The only other set in which the expression $d_{\ell} + \cdots + d_{j-1}$ appears in a defining inequality is $\calC(\bc_\ell)$, and there, the inequality is
$$d_{\ell} + \cdots + d_{j-1} \geq c_{\ell,\ell}+\cdots+c_{\ell, j-1}.$$
Note that by definition,
\begin{align*}
\bc_\ell &= (c_{0,0}, \ldots, c_{0,\ell-1}-1, c_{0,\ell},\ldots\ldots\ldots  \ldots, c_{0,k-1}+1),\\
\bc_j &= (c_{0,0}, \ldots \ldots\ldots  \ldots, c_{0,j-1}-1, c_{0,j}, \ldots, c_{0,k-1}+1),
\end{align*}
so if $M=c_{j,j} + \cdots + c_{j,\ell-1} =c_{0,j} +\cdots + (c_{0,k-1}+1) + \cdots + c_{0,\ell-1}$, then
$$c_{\ell,\ell}+\cdots+c_{\ell, j-1} =  c_{0,\ell}+ \cdots + c_{0,j-1} = N-M+1.$$
Therefore, the sets $\calC(\bc_j)$ are disjoint.


Since the sets $\calC(\bc_j)$ are partitioned by ${k-1\choose 2}$ hyperplanes, each of the form $d_a+\cdots+d_b = c_{0,a}+\cdots+c_{0,b}$ for $0\leq a\leq b \leq k-2$, and each of these hyperplanes contain the point $\bc_0$, then $\calC(N,k)= \coprod_{0\leq j\leq k-1} \calC(\bc_j)$, and the result follows.
\end{proof}

\begin{corollary} \label{cor.thecorollary}
With $\calC(\bc_j)$ defined as in Lemma~\ref{lem.thelemma}, let $S(\bc_j) = \sum_{\bd \in \calC(\bc_j)} {N\choose \bd}$. Then
$\sum_{j} S(\bc_j)=k^N.$
\end{corollary}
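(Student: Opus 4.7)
The plan is that this is essentially a direct corollary of Lemma~\ref{lem.thelemma} combined with the multinomial theorem. First, I would invoke Lemma~\ref{lem.thelemma} to conclude that the sets $\calC(\bc_0),\ldots,\calC(\bc_{k-1})$ partition $\calC(N,k)$, the set of all weak compositions of $N$ with $k$ parts. Because the union is disjoint, I can swap the order of summation to obtain
\[
\sum_{j=0}^{k-1} S(\bc_j) = \sum_{j=0}^{k-1}\sum_{\bd \in \calC(\bc_j)} \binom{N}{\bd} = \sum_{\bd \in \calC(N,k)} \binom{N}{\bd}.
\]

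Next, I would apply the multinomial theorem to evaluate this final sum. Writing $k = \underbrace{1+1+\cdots+1}_{k}$ and expanding, we have
\[
k^N = (\underbrace{1+1+\cdots+1}_{k})^N = \sum_{\bd\vDash N,\, \ell(\bd)=k} \binom{N}{\bd} = \sum_{\bd \in \calC(N,k)} \binom{N}{\bd},
\]
which yields the claim.

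Since Lemma~\ref{lem.thelemma} does the entire structural work of partitioning $\calC(N,k)$ according to which inequalities $\bd$ satisfies, there is no real obstacle here; the corollary is just the observation that summing multinomial coefficients over a partition of $\calC(N,k)$ gives the same total as summing over the whole set, which is $k^N$. The only thing worth double-checking is that degenerate cases where some $\bc_j$ has a negative entry (and hence $\calC(\bc_j)=\emptyset$) contribute $0$ to the left-hand side and are correctly handled by Lemma~\ref{lem.thelemma}'s convention that $\calC(\bc_j)$ is empty in that case, so no composition is double-counted or missed.
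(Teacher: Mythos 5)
Your proposal is correct and matches the paper's own argument, which likewise cites Lemma~\ref{lem.thelemma} for the partition of $\calC(N,k)$ and then invokes the multinomial theorem $\sum_{\bd\in\calC(N,k)}\binom{N}{\bd}=k^N$. Your extra remark about the empty $\calC(\bc_j)$ cases contributing zero is a harmless and sensible check, but otherwise the two proofs are identical.
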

\begin{proof}
This follows from Lemma~\ref{lem.thelemma} and the multinomial theorem, 
$\sum_{\bd\in \calC(N,k)} {N\choose \bd}= k^N$.
\end{proof}

\begin{example} The essence of Lemma~\ref{lem.thelemma} is to partition multinomial coefficients in a specific way that will be useful in the proof of Theorem~\ref{thm.kparkingtriangle}.
When $k=2$, this is simply a partition of the binomial coefficients for a fixed $N$.  For example let $\bc_0=(c,N-c)$, so that $\bc_1=(c-1,N-c+1)$. We have
\begin{align*}
\calC(\bc_0) &= \{ (d,N-d) \mid d \geq c \},\\
\calC(\bc_1) &= \{ (d,N-d) \mid N-d \geq N-c+1\} = \{ (d,N-d) \mid d\leq c-1\},
\end{align*}
and $S(\bc_0) = \sum_{d=c}^{N} {N \choose d}$, $S(\bc_1) = \sum_{d=0}^{c-1} {N \choose d}$.

Simply put, we are partitioning the $N$-th row of Pascal's triangle into the set of binomial coefficients ${N \choose d}$ with $d\geq c$, and the set of binomial coefficients ${N\choose d}$ with $d< c$.  Summing over the entire row of Pascal's triangle yields $S(\bc_0)+S(\bc_1) = \sum_{d=0}^N {N\choose d}=2^N$.  
\end{example}

\begin{example}
This example explains the title of this section. Generalizing the previous example, for $k=3$, the multinomial coefficients ${N \choose d_1,\ldots, d_k}$ can be arranged on the the lattice points $(d_1,\ldots, d_k)\in \bbZ^d$, forming a $(k-1)$-simplex in $\bbZ^k$.

The left side of Figure~\ref{fig.trinomial_triangle} depicts the multinomial triangle for $N=6$ and $k=3$, with the weak composition $(d_1,d_2,d_3)$ listed below each entry. This partition of the triangle corresponds to the one defined by $\bc_0=(2,2,2)$, $\bc_1=(1,2,3)$, and $\bc_2=(2,1,3)$.
\end{example}
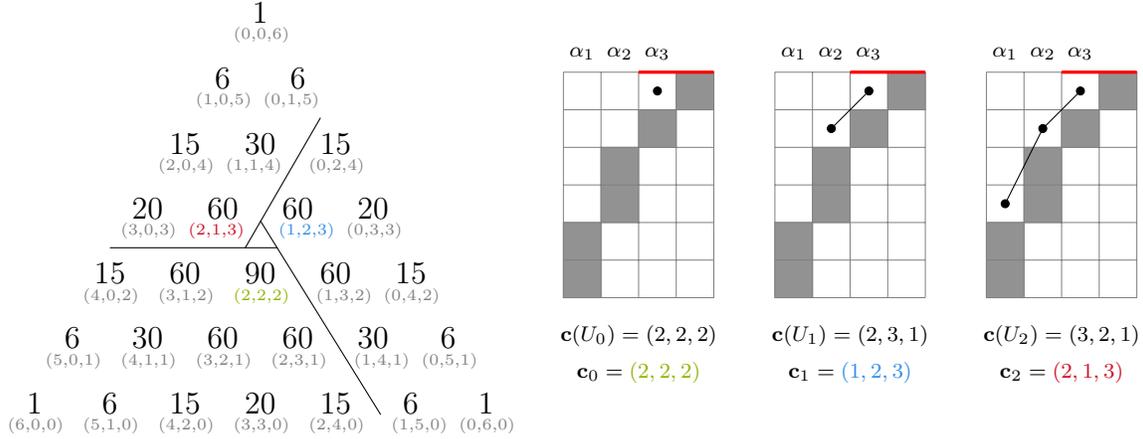
\begin{figure}[ht!]
\begin{tikzpicture}
\begin{scope}[scale=1, xshift=-200, yshift=-40]
\node at (3,5.2) {$1$}; \node at (3,4.9) {\tiny\textcolor{gray}{$\,_{(0,0,6)}$}};
\node at (2.5,4.33) {$6$}; \node at (2.5,4.03) {\tiny\textcolor{gray}{$\,_{(1,0,5)}$}};
\node at (3.5,4.33) {$6$}; \node at (3.4,4.03) {\tiny\textcolor{gray}{$\,_{(0,1,5)}$}};
\node at (2,3.46) {$15$}; \node at (2,3.16) {\tiny\textcolor{gray}{$\,_{(2,0,4)}$}};
\node at (3,3.46) {$30$}; \node at (2.9,3.16) {\tiny\textcolor{gray}{$\,_{(1,1,4)}$}};
\node at (4,3.46) {$15$}; \node at (4,3.16) {\tiny\textcolor{gray}{$\,_{(0,2,4)}$}};
\node at (1.5,2.6) {$20$}; \node at (1.5,2.3) {\tiny\textcolor{gray}{$\,_{(3,0,3)}$}};
\node at (2.5,2.6) {$60$}; \node at (2.4,2.3) {\tiny\textcolor{lava}{$\,_{(2,1,3)}$}};
\node at (3.5,2.6) {$60$}; \node at (3.6,2.3) {\tiny\textcolor{bleudefrance}{$\,_{(1,2,3)}$}};
\node at (4.5,2.6) {$20$}; \node at (4.5,2.3) {\tiny\textcolor{gray}{$\,_{(0,3,3)}$}};
\node at (1,1.73) {$15$}; \node at (1,1.43) {\tiny\textcolor{gray}{$\,_{(4,0,2)}$}};
\node at (2,1.73) {$60$}; \node at (2,1.43) {\tiny\textcolor{gray}{$\,_{(3,1,2)}$}};
\node at (3,1.73) {$90$}; \node at (3,1.43) {\tiny\textcolor{applegreen}{$\,_{(2,2,2)}$}};
\node at (4,1.73) {$60$}; \node at (4.1,1.43) {\tiny\textcolor{gray}{$\,_{(1,3,2)}$}};
\node at (5,1.73) {$15$}; \node at (5,1.43) {\tiny\textcolor{gray}{$\,_{(0,4,2)}$}};
\node at (0.5,.87) {$6$}; \node at (.5,.57) {\tiny\textcolor{gray}{$\,_{(5,0,1)}$}};
\node at (1.5,.87) {$30$}; \node at (1.5,.57) {\tiny\textcolor{gray}{$\,_{(4,1,1)}$}};
\node at (2.5,.87) {$60$}; \node at (2.5,.57) {\tiny\textcolor{gray}{$\,_{(3,2,1)}$}};
\node at (3.5,.87) {$60$}; \node at (3.5,.57) {\tiny\textcolor{gray}{$\,_{(2,3,1)}$}};
\node at (4.5,.87) {$30$}; \node at (4.6,.57) {\tiny\textcolor{gray}{$\,_{(1,4,1)}$}};
\node at (5.5,.87) {$6$}; \node at (5.5,.57) {\tiny\textcolor{gray}{$\,_{(0,5,1)}$}};
\node at (0,0) {$1$}; \node at (0,-.3) {\tiny\textcolor{gray}{$\,_{(6,0,0)}$}};
\node at (1,0) {$6$}; \node at (1,-.3) {\tiny\textcolor{gray}{$\,_{(5,1,0)}$}};
\node at (2,0) {$15$}; \node at (2,-.3) {\tiny\textcolor{gray}{$\,_{(4,2,0)}$}};
\node at (3,0) {$20$}; \node at (3,-.3) {\tiny\textcolor{gray}{$\,_{(3,3,0)}$}};
\node at (4,0) {$15$}; \node at (4,-.3) {\tiny\textcolor{gray}{$\,_{(2,4,0)}$}};
\node at (5,0) {$6$}; \node at (5.1,-.3) {\tiny\textcolor{gray}{$\,_{(1,5,0)}$}};
\node at (6,0) {$1$}; \node at (6,-.3) {\tiny\textcolor{gray}{$\,_{(0,6,0)}$}};
\draw (1,2.07)--(3.23,2.07);
\draw (3,2.43)--(4.6,-0.15);
\draw (2.8,2.07)--(3.8,3.80);
\end{scope}

\begin{scope}[xshift=0, scale=0.5]
	\draw[fill, color=gray!85] (0,0) rectangle (1,2);
	\draw[fill, color=gray!85] (1,2) rectangle (2,4);
	\draw[fill, color=gray!85] (2,4) rectangle (3,5);
	\draw[fill, color=gray!85] (3,5) rectangle (4,6);
	\draw[very thin, color=gray!100] (0,0) grid (4,6);	

	\vertex[fill, minimum size=3pt] at (2.5,5.5){};
								
	\draw[very thick, color=red] (2,6)--(4,6);
		
	\node at (0.5, 6.5) {\tiny$\alpha_1$};
	\node at (1.5, 6.5) {\tiny$\alpha_2$};
	\node at (2.5, 6.5) {\tiny$\alpha_3$};
	\node at (2,-1) {\tiny$\bc(U_0)=\textcolor{black}{(2,2,2)}$};
	\node at (2,-2) {\tiny$\bc_0=\textcolor{applegreen}{(2,2,2)}$};
\end{scope}
\begin{scope}[xshift=80, scale=0.5]
	\draw[fill, color=gray!85] (0,0) rectangle (1,2);
	\draw[fill, color=gray!85] (1,2) rectangle (2,4);
	\draw[fill, color=gray!85] (2,4) rectangle (3,5);
	\draw[fill, color=gray!85] (3,5) rectangle (4,6);
	\draw[very thin, color=gray!100] (0,0) grid (4,6);	

	\vertex[fill, minimum size=3pt] at (2.5,5.5){};
	\vertex[fill, minimum size=3pt] at (1.5,4.5){};
	\draw (1.5,4.5)--(2.5,5.5);
								
	\draw[very thick, color=red] (2,6)--(4,6);
		
	\node at (0.5, 6.5) {\tiny$\alpha_1$};
	\node at (1.5, 6.5) {\tiny$\alpha_2$};
	\node at (2.5, 6.5) {\tiny$\alpha_3$};
	\node at (2,-1) {\tiny$\bc(U_1)=\textcolor{black}{(2,3,1)}$};
	\node at (2,-2) {\tiny$\bc_1=\textcolor{bleudefrance}{(1,2,3)}$};	
\end{scope}
\begin{scope}[xshift=160, scale=0.5]
	\draw[fill, color=gray!85] (0,0) rectangle (1,2);
	\draw[fill, color=gray!85] (1,2) rectangle (2,4);
	\draw[fill, color=gray!85] (2,4) rectangle (3,5);
	\draw[fill, color=gray!85] (3,5) rectangle (4,6);
	\draw[very thin, color=gray!100] (0,0) grid (4,6);	

	\vertex[fill, minimum size=3pt] at (.5,2.5){};
	\vertex[fill, minimum size=3pt] at (1.5,4.5){};
	\vertex[fill, minimum size=3pt] at (2.5,5.5){};
	\draw (.5,2.5)--(1.5,4.5)--(2.5,5.5);
								
	\draw[very thick, color=red] (2,6)--(4,6);
		
	\node at (0.5, 6.5) {\tiny$\alpha_1$};
	\node at (1.5, 6.5) {\tiny$\alpha_2$};
	\node at (2.5, 6.5) {\tiny$\alpha_3$};
	\node at (2,-1) {\tiny$\bc(U_2)=\textcolor{black}{(3,2,1)}$};	
	\node at (2,-2) {\tiny$\bc_2=\textcolor{lava}{(2,1,3)}$};		
\end{scope}
\end{tikzpicture}
\caption{A partition of the multinomial triangle for $N=6$ and $k=3$ determined by $\bc_0,\bc_1,\bc_2$. The sum of the entries in each third is the number of completions to standardized unified diagrams for each truncated unified diagram on the right. Note that $\bc(U_j)$ is the backward cyclic shift of $\bc_j$ by $j$ positions.}
\label{fig.trinomial_triangle}
\end{figure}

\begin{theorem} \label{thm.kparkingtriangle}
Let $k\in \bbN$ and $n>k$. 
The number of standardized level-$(k,i)$ unified diagrams for $\car{n+1}{k}$ is
$$\left|\,\mathcal{SU}_{\car{n+1}{k}}^{(k,i)}\right| 
	= k^{(k+1)(n-k)-3-i} \cdot T_k(n-k-1,i).$$
\end{theorem}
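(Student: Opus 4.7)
The plan is to evaluate $|\mathcal{SU}_G^{(k,i)}| = \sum_{U \in \calU_G^{(k,i)}} S(U)$ from equation~\eqref{eqn.SUcompletion} by combining the partitioning identity of Lemma~\ref{lem.thelemma} with a cyclic action on the set of truncated diagrams. Throughout, set $G = \car{n+1}{k}$ and $N = m - n - i = (k+1)(n-k) - 2 - i$, and identify each $U = (\bq, \kappa, \Gamma) \in \calU_G^{(k,i)}$ with its image $\Theta(U) \in \calT_k(n-k-1, i)$ via the bijection of Theorem~\ref{thm.theta}.

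First, I would record how the $k$-hull $\bc(U)$ depends on the multiplicities $(d_0, d_1, \ldots, d_{k-1})$, where $d_j$ counts line segments of $\Gamma|_R$ beginning in column $k - j$ (equivalently, the number of bar labels $\overline{j}$ in $\Theta(U)$). Lemma~\ref{lem.cvector} yields
\[
\bc(U) = \bigl((n-k)+d_{k-1},\; (n-k)+d_{k-2},\; \ldots,\; (n-k)+d_1,\; (n-k-1)+d_0\bigr).
\]
Next, I would introduce the cyclic action $\tau: \calU_G^{(k,i)} \to \calU_G^{(k,i)}$ that preserves $(\bq, \kappa)$ and replaces each line segment $[\ell, k+h]$ of $\Gamma$ by $[\ell', k+h]$, where $\ell' = \ell - 1$ if $\ell > 1$ and $\ell' = k$ if $\ell = 1$. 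This is a bijection with $\tau^k = \mathrm{id}$, and it cycles the multiplicity vector via $(d_0, \ldots, d_{k-1}) \mapsto (d_{k-1}, d_0, \ldots, d_{k-2})$.

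Taking $\bc = \bc(U)$ as the base composition in Lemma~\ref{lem.thelemma}, a direct case-by-case computation then shows that $\bc(\tau^j(U))$ is precisely the backward cyclic shift of the composition $\bc_j$ from the Lemma by $j$ positions. Because the multinomial coefficient $\binom{N}{\bd}$ is invariant under cyclic shifts of $\bd$, and because this cyclic shift carries the defining dominance inequalities of $\calC(\bc_j)$ bijectively onto those of $\calC(\bc(\tau^j(U)))$, we obtain $S(\tau^j(U)) = S(\bc_j)$ for each $j$. Summing over $j$ and applying Corollary~\ref{cor.thecorollary} produces the orbit-sum identity
\[
\sum_{j=0}^{k-1} S(\tau^j(U)) = \sum_{j=0}^{k-1} S(\bc_j) = k^N.
\]

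Finally, I would conclude by double-counting $\sum_{U} \sum_{j} S(\tau^j(U))$ in two ways. Summing the orbit identity over $U \in \calU_G^{(k,i)}$ gives $|\calU_G^{(k,i)}| \cdot k^N$ on one hand; on the other hand, since $\tau$ is a bijection on $\calU_G^{(k,i)}$, each $U' \in \calU_G^{(k,i)}$ appears as $\tau^j(U)$ for exactly $k$ pairs $(U, j)$, so the same double sum equals $k \cdot |\mathcal{SU}_G^{(k,i)}|$; importantly, this bookkeeping does not require the individual orbits of $\tau$ to have size exactly $k$. Equating the two expressions and inserting $|\calU_G^{(k,i)}| = T_k(n-k-1, i)$ from Theorem~\ref{thm.theta}, together with $N - 1 = (k+1)(n-k) - 3 - i$, yields the claimed formula. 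The main technical hurdle will be carefully verifying the identification $\bc(\tau^j(U)) \leftrightarrow \bc_j$ in all cases, tracking the asymmetric last coordinate of the base composition $\bh = (n-k, \ldots, n-k, 2(n-k-1) - i)$ through the cyclic shifts.
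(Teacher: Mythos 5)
Your proposal is correct and follows the same overall strategy as the paper's proof: the same hull computation via Lemma~\ref{lem.cvector}, the same cyclic shift of left endpoints within the first $k$ columns, and the same appeal to Lemma~\ref{lem.thelemma} and Corollary~\ref{cor.thecorollary} to produce the factor $k^{m-n-i}$. The one place where you genuinely diverge is the treatment of orbits on which the cyclic action fails to be free. The paper lifts the action to \emph{line-dot diagrams} satisfying a weakened ordering convention, where every orbit has size exactly $k$, and then records that such an orbit collapses onto an orbit $\calO$ of gravity diagrams with each representative hit $k/|\calO|$ times, yielding $\sum_{U\in\calO}S(U)=\tfrac{|\calO|}{k}k^{m-n-i}$. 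You instead observe that the identity $\sum_{j=0}^{k-1}S(\tau^j(U))=k^{m-n-i}$ holds \emph{pointwise} for every $U$ --- since $\bc(\tau^j(U))$ depends only on the multiset of left endpoints, the identification $\bc(\tau^j(U))=\beta^j(\bc_j)$ is insensitive to whether $\tau^j(U)=U$ --- and then the double count $\sum_U\sum_j S(\tau^j(U))=k\sum_{U'}S(U')$ uses only that $\tau$ is a bijection on $\calU_{\car{n+1}{k}}^{(k,i)}$. This eliminates the auxiliary line-dot diagrams and all orbit-size bookkeeping; the two formulations are equivalent (your pointwise identity, summed over an orbit, recovers the paper's orbit formula), but yours is cleaner to state and to verify. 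The only loose ends are minor: you should confirm that $\tau$ preserves the embedding constraint against $(\bq,\kappa)$ (it does, since right endpoints and hence the configuration beyond column $k$ are untouched), and the case $k=1$, where Lemma~\ref{lem.thelemma} does not apply, needs the one-line remark that $S(U)=1=k^{m-n-i}$ trivially (the paper also treats $k=1$ separately).
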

\begin{proof} Let $G=\car{n+1}{k}$. 
When $k=1$, there is only one way to complete a truncated unified diagram $U=(\bq,\kappa,\Gamma)\in \calU_G^{(1,i)}$ to a standardized unified diagram because there is only one way to add $m-n-i$ north steps to complete $\bq$ in the first column. Thus it follows from Equation~\eqref{eqn.SUcompletion} and Theorem~\ref{thm.theta} that $|\,\mathcal{SU}_G^{(1,i)}|= |\,\calU_G^{(1,i)}| = T_1(n-2,i)$. 

So suppose $k\geq2$. We first define a $\bbZ/k\bbZ$-action on the set of out-degree line-dot diagrams of $\car{n+1}{k}$ which satisfy the following:
\begin{enumerate}
\item[(a)] each line segment must be horizontal,
\item[(b')] the line segments are ordered from top to bottom so that the line segments with right endpoints in the $q$-th column are above the line segments with right endpoints at the $p$-th column if $q>p$.
\end{enumerate}
We point out that the last property of the out-degree gravity diagrams that specifies a certain ordering of line segments is omitted.

Modifying Remark~\ref{rem.trapezoid} slightly to apply to these line-dot diagrams instead of gravity diagrams, we can still consider the line-dot diagrams for $\car{n+1}{k}$ to be defined on a trapezoidal array of dots with $k-1+i$ dots in the $i$-th row for $i=1,\ldots, n-k-1$.  Let $\Gamma|_R$ denote the restriction of the line-dot diagram to the first $k$ columns, and note that every line segment of $\Gamma|_R$ has its right endpoint in the $k$-th column. Letting $L_1,\ldots, L_{n-k-1}$ be the (possibly trivial) line segments of $\Gamma|_R$ where $L_j=[\ell_j,k]$, we define $\rho(\Gamma) = (\ell_1-1,\ldots, \ell_{n-k-1}-1)$.

For $z\in \bbZ/k\bbZ$, let 
$$z\cdot \rho(\Gamma) = (\tilde \ell_1,\ldots, \tilde \ell_{n-k-1})
	=(\ell_1-1-z,\ldots, \ell_{n-k-1}-1-z) \mod{k},$$
and let $z\cdot \Gamma$ be the line-dot diagram obtained from $\Gamma$ by replacing the line segments $L_1,\ldots,$ $L_{n-k-1}$ in $\Gamma|_R$ by the line segments $[\tilde \ell_1 +1, k], \ldots, [\tilde \ell_{n-k-1}+1,k]$.
The configuration of the line segments in $\Gamma$ restricted to the columns indexed by $\alpha_{k+1},\ldots, \alpha_{n-2}$ remains unchanged. 

We note that each orbit of the cyclic action of $\bbZ/k\bbZ$ on the set of line-dot diagrams of $\car{n+1}{k}$ has size $k$.  As well, there is an action of $\bbZ/k\bbZ$ on the set of truncated unified diagrams $\calU_G^{(k,i)}$ that is induced in the following way.

Fix a labeled level-$(k,i)$ $(t_{k+1},\ldots, t_n)$-Dyck path $(\bq,\kappa)$, and consider the set of truncated unified diagrams $U_j=(\bq,\kappa,\Gamma_j) \in \calU_G^{(k,i)}$ for $j=0,\ldots, k-1$, where $\{\Gamma_0,\ldots, \Gamma_{k-1}\}$ is an orbit of line-dot diagrams under the $\bbZ/k\bbZ$-action.  
Necessarily, each $\Gamma_j$ has at most $n-k-1-i$ line segments, and the cyclic $\bbZ/k\bbZ$-action is defined in the same way as before. 

In each truncated unified diagram $U_j$, the embedded line-dot diagram becomes a gravity diagram as we take the convention that the line segments should occupy the lowest possible dots in each column, so an orbit of line-dot diagrams of size $k$ can induce an orbit of truncated unified diagrams of size less than $k$.

Given a $\bbZ/k\bbZ$-orbit $\calO$ of truncated unified diagrams, we will show that
$$\sum_{U\in \calO} S(U) =  k^{m-n-i-1}|\calO|.$$
We first consider the case where the orbit $\calO = \{U_0,\ldots, U_{k-1}\}$ has size $k$.
Let $\bc(U_j)$ denote the $k$-hull of $U_j$, and let $\bc_0 = (c_1,\ldots, c_k)= \bc(U_0)$. 
Let $\bc_j = \bc_0 + \be_{k}-\be_{j}$ be as in Lemma~\ref{lem.thelemma} (with a shift in indices).
We claim that $\bc(U_j)$ is the backward cyclic shift of $\bc_j$ by $j$ positions.

Suppose $\rho(\Gamma_0) = (\ell_1-1,\ldots, \ell_{n-k-1-i}-1)$ so that $\rho(\Gamma_j) = (\ell_1-1-j, \ldots, \ell_{n-k-1-i}-1-j) \mod{k}$.
Then by Lemma~\ref{lem.cvector},
\begin{align*}
\bc(U_0) 
	&= \bh + \bb -  (n-k-1-i) \be_{k},\\
\bc(U_j) &= \bh + \beta^j(\bb) -(n-k-1-i)\be_{k},
\end{align*}
where $\bb=(b_1,\ldots, b_k)=\sum_{p=1}^{n-k-1-i} \be_{\ell_p}$, and $\beta^j$ denotes the backward cyclic shift of coordinates by $j$ positions. 
This simplifies to
\begin{align*}
\bc_0
&= (c_1,\ldots, c_k) = \bc(U_0)\\
&= (n-k+b_1, n-k+b_2, \ldots, n-k+b_{k-1}, 2(n-k-1)-i+b_k-(n-k-1-i))\\
&= (n-k+b_1, n-k+b_2, \ldots, n-k+b_{k-1}, n-k+b_k-1),
\end{align*}
and similarly,
\begin{align*}
\bc(U_j) 
&=(n-k+b_{j+1}, \ldots, n-k+b_k, n-k+b_1, \ldots, n-k+b_{j-1}, n-k+b_j-1)\\
&=(c_{j+1},\ldots, c_k+1, c_1, \ldots, c_{j-1}, c_j-1)\\
&=\beta^j(\bc_0 + \be_k - \be_j)
=\beta^j(\bc_j),
\end{align*}
as claimed.

Because $\bc(U_j)$ and $\bc_j$ are simply rearrangements of each other, then the number of ways to complete $U_j$ to a standardized unified diagram is
$$S(U_j) = \sum_{\bd\in \calC(\bc(U_j))} {m-n\choose \bd} = \sum_{\bd\in \calC(\bc_j)} {m-n\choose \bd}.$$
By Corollary~\ref{cor.thecorollary}, we conclude that when $\calO$ is an orbit of size $k$,
$$\sum_{j=0}^{k-1} S(U_j) = k^{m-n-i}.$$
More generally, in the case that the orbit $\calO$ has size less than $k$, the difference is that the $\bbZ/k\bbZ$-action generates $k$ distinct line-dot diagrams but only $|\calO|$ distinct representatives as gravity diagrams, and so  
$$\sum_{U\in \calO} S(U) = \frac{|\calO|}{k} k^{m-n-i}.$$

We finally see that
$$\left|\,\mathcal{SU}_G^{(k,i)}\right|
= \sum_{\calO} \sum_{U\in \calO} S(U)= \sum_{\calO} k^{m-n-1-i} |\calO| 
= k^{m-n-1-i}\cdot T_k(n-k-1,i),$$
where the last equality follows because the sum is over all truncated level-$(k,i)$ unified diagrams for $G$, and by Theorem~\ref{thm.theta} there are $T_k(n-k-1,i)$ of these.
\end{proof}

\begin{example} \label{eg.4car}
Figure~\ref{fig.4-car_orbit} shows a $\bbZ/4\bbZ$-orbit of line-dot diagrams for $G=\car{9}{4}$. 
The rectangular region $R$ of a line-dot diagram is the portion restricted to the columns labeled $\alpha_1,\ldots, \alpha_4$. 
Note that the $\bbZ/4\bbZ$-action on the line-dot diagrams leaves the line segments which are supported on the columns $\alpha_4, \alpha_5, \alpha_6$ unchanged.
 
\begin{figure}[ht!]
\begin{center}
\begin{tikzpicture}[scale=0.5]
\begin{scope}[scale=1]
	\draw[color=red] (0,2)--(3,2); \draw(3,2)--(4,2);
	\draw[color=red] (2,1)--(3,1); \draw(3,1)--(4,1);	
	\vertex[fill, label=above:\tiny$\alpha_1$, color=red] at (0,2) {};
	\vertex[fill, label=above:\tiny$\alpha_2$, color=red] at (1,2) {};
	\vertex[fill, label=above:\tiny$\alpha_3$, color=red] at (2,2) {};
	\vertex[fill, label=above:\tiny$\alpha_4$, color=red] at (3,2) {};
	\foreach \x in {0,1,2,3}
	\foreach \y in {0,1} 
		\vertex[fill, minimum size=4pt, color=red] at (\x,\y) {};
	\vertex[fill, minimum size=4pt] at (4,1) {};
	\vertex[fill, minimum size=4pt] at (4,2) {};
	\vertex[fill, minimum size=4pt] at (5,2) {};
	\node at (2,-1.2) {\footnotesize$\rho(\Gamma_0)=(3,2,0)$};		
\end{scope}
\begin{scope}[scale=1, xshift=220]
	\draw (3,2)--(4,2);
	\draw[color=red] (1,1)--(3,1); \draw (3,1)--(4,1);
	\draw[color=red] (2,0)--(3,0);	
	\vertex[fill, label=above:\tiny$\alpha_1$, color=red] at (0,2) {};
	\vertex[fill, label=above:\tiny$\alpha_2$, color=red] at (1,2) {};
	\vertex[fill, label=above:\tiny$\alpha_3$, color=red] at (2,2) {};
	\vertex[fill, label=above:\tiny$\alpha_4$, color=red] at (3,2) {};
	\foreach \x in {0,1,2,3}
	\foreach \y in {0,1} 
		\vertex[fill, minimum size=4pt, color=red] at (\x,\y) {};
	\vertex[fill, minimum size=4pt] at (4,1) {};
	\vertex[fill, minimum size=4pt] at (4,2) {};
	\vertex[fill, minimum size=4pt] at (5,2) {};
	\node at (2,-1.2) {\footnotesize$\rho(\Gamma_1)=(2,1,3)$};	
\end{scope}
\begin{scope}[scale=1, xshift=440]
	\draw[color=red] (2,2)--(3,2); \draw (3,2)--(4,2);
	\draw[color=red] (0,1)--(3,1); \draw (3,1)--(4,1);
	\draw[color=red] (1,0)--(3,0);		
	\vertex[fill, label=above:\tiny$\alpha_1$, color=red] at (0,2) {};
	\vertex[fill, label=above:\tiny$\alpha_2$, color=red] at (1,2) {};
	\vertex[fill, label=above:\tiny$\alpha_3$, color=red] at (2,2) {};
	\vertex[fill, label=above:\tiny$\alpha_4$, color=red] at (3,2) {};
	\foreach \x in {0,1,2,3}
	\foreach \y in {0,1} 
		\vertex[fill, minimum size=4pt, color=red] at (\x,\y) {};
	\vertex[fill, minimum size=4pt] at (4,1) {};
	\vertex[fill, minimum size=4pt] at (4,2) {};
	\vertex[fill, minimum size=4pt] at (5,2) {};	
	\node at (2,-1.2) {\footnotesize$\rho(\Gamma_2)=(1,0,2)$};	
\end{scope}
\begin{scope}[scale=1, xshift=660]
	\draw[color=red] (1,2)--(3,2); \draw (3,2)--(4,2);
	\draw (3,1)--(4,1);	
	\draw[color=red] (0,0)--(3,0);
	\vertex[fill, label=above:\tiny$\alpha_1$, color=red] at (0,2) {};
	\vertex[fill, label=above:\tiny$\alpha_2$, color=red] at (1,2) {};
	\vertex[fill, label=above:\tiny$\alpha_3$, color=red] at (2,2) {};
	\vertex[fill, label=above:\tiny$\alpha_4$, color=red] at (3,2) {};			
	\foreach \x in {0,1,2,3}
	\foreach \y in {0,1} 
		\vertex[fill, minimum size=4pt, color=red] at (\x,\y) {};
	\vertex[fill, minimum size=4pt] at (4,1) {};
	\vertex[fill, minimum size=4pt] at (4,2) {};
	\vertex[fill, minimum size=4pt] at (5,2) {};	
	\node at (2,-1.2) {\footnotesize$\rho(\Gamma_3)=(0,3,1)$};	
\end{scope}
\end{tikzpicture}
\end{center}
\caption{The $\bbZ/4\bbZ$-orbit of line-dot diagrams for $\car94$.}
\label{fig.4-car_orbit}
\end{figure}

The $4$-hull of a level-$(4,0)$ truncated unified diagram with empty gravity diagram is $\bh = (4,4,4,6)$.  Fixing the level-$(4,0)$ labeled Dyck path $(\bq,\kappa)$ and embedding $\Gamma_0$ into the $8\times 18$ grid to obtain a 
truncated unified diagram $U_0=(\bq,\kappa,\Gamma_0)$, the composition which represents its $4$-hull is
\begin{align*}
\bc(U_0) 
&= (4,4,4,6) + (1,0,0,-1) + (0,0,1,-1) + (0,0,0,1-1)\\
&= (4,4,4,6) + (1,0,1,1) + (0,0,0,-3)\\
&= (5,4,5,4).
\end{align*}
%
%
In all, the compositions representing the $4$-hulls of the truncated unified diagrams in this orbit are
$$\bc(U_0)=(5,4,5,4), \quad
\bc(U_1)=(4,5,5,4), \quad
\bc(U_2)=(5,5,5,3), \quad
\bc(U_3)=(5,5,4,4), $$
and shifting $\bc(U_j)$ forwards by $j$ positions gives
$$\bc_0=(5,4,5,4), \quad
\bc_1=(4,4,5,5), \quad
\bc_2=(5,3,5,5), \quad
\bc_3=(5,4,4,5). $$
The number of ways to complete the truncated unified diagram $U_j=(\bq,\kappa,\Gamma_j)$ is $S(U_j) = \sum_{\bd\in\calC(\bc_j)} {18\choose \bd}$, where
by Lemma~\ref{lem.thelemma}, the sets
\begin{align*}
\calC(\bc_0) &=\{ \bd \vDash 18 \mid d_0\geq5, d_0+d_1\geq9, d_0+d_1+d_2\geq14\},\\
\calC(\bc_1) &=\{ \bd \vDash 18 \mid d_1\geq4, d_1+d_2\geq9, d_0<5\},\\
\calC(\bc_2) &=\{ \bd \vDash 18 \mid d_2\geq5, d_0+d_1<9, d_1<4\},\\
\calC(\bc_3) &=\{ \bd \vDash 18 \mid d_0+d_1+d_2<4, d_1+d_2<9, d_2<5\},
\end{align*}
partition the entire set $\calC(18,4)$ of weak compositions of $m-n=18$ with $k=4$ parts. Therefore, $\sum_{j=0}^3 S(U_j) = 4^{18}. $
\end{example}

\begin{example}
We have seen in Figure~\ref{fig.car62} that there are $\Cat(3,5)=7$ out-degree gravity diagrams for $G=\car62$.  For each truncated unified diagram $U\in \calU_G^{(2,0)}$ with a specified out-degree gravity diagram, we compute the number $S(U)$ of standardized level-$(2,0)$ unified diagrams whose truncation is $U$.

\begin{center}
\begin{tikzpicture}[scale=0.4]
\begin{scope}[scale=1]
	\draw[fill, color=gray!85] (0,0) rectangle (1,3);
	\draw[fill, color=gray!85] (1,3) rectangle (2,5);
	\draw[fill, color=gray!85] (2,5) rectangle (3,6);
	\draw[fill, color=gray!85] (3,6) rectangle (4,7);
	\draw[very thin, color=gray!100] (0,0) grid (4,7);		

	\vertex[fill, minimum size=3pt] at (2.5,6.5) {}; 
	\vertex[fill, minimum size=3pt] at (1.5,5.5) {}; 
	\vertex[fill, minimum size=3pt] at (1.5,6.5) {}; 	
	\vertex[fill, minimum size=3pt] at (0.5,3.5) {}; 
	\vertex[fill, minimum size=3pt] at (0.5,4.5) {};
	\vertex[fill, minimum size=3pt] at (0.5,5.5) {}; 
	\vertex[fill, minimum size=3pt] at (0.5,6.5) {};
	\draw[very thick, color=red] (1,7)--(4,7);
	\node at (2,-1) {\footnotesize$S(U_1)=99$};			
\end{scope}
\begin{scope}[scale=1, xshift=150]
	\draw[fill, color=gray!85] (0,0) rectangle (1,3);
	\draw[fill, color=gray!85] (1,3) rectangle (2,5);
	\draw[fill, color=gray!85] (2,5) rectangle (3,6);
	\draw[fill, color=gray!85] (3,6) rectangle (4,7);
	\draw[very thin, color=gray!100] (0,0) grid (4,7);		

	\vertex[fill, minimum size=3pt] at (2.5,6.5) {}; 
	\vertex[fill, minimum size=3pt] at (1.5,5.5) {}; 
	\vertex[fill, minimum size=3pt] at (1.5,6.5) {}; 	
	\vertex[fill, minimum size=3pt] at (0.5,3.5) {}; 
	\vertex[fill, minimum size=3pt] at (0.5,4.5) {};
	\vertex[fill, minimum size=3pt] at (0.5,5.5) {}; 
	\vertex[fill, minimum size=3pt] at (0.5,6.5) {};
	\draw (0.5,3.5)--(1.5,5.5);
	\draw[very thick, color=red] (1,7)--(4,7);	
	\node at (2,-1) {\footnotesize$S(U_2)=64$};	
\end{scope}
\begin{scope}[scale=1, xshift=300]
	\draw[fill, color=gray!85] (0,0) rectangle (1,3);
	\draw[fill, color=gray!85] (1,3) rectangle (2,5);
	\draw[fill, color=gray!85] (2,5) rectangle (3,6);
	\draw[fill, color=gray!85] (3,6) rectangle (4,7);
	\draw[very thin, color=gray!100] (0,0) grid (4,7);		

	\vertex[fill, minimum size=3pt] at (2.5,6.5) {}; 
	\vertex[fill, minimum size=3pt] at (1.5,5.5) {}; 
	\vertex[fill, minimum size=3pt] at (1.5,6.5) {}; 	
	\vertex[fill, minimum size=3pt] at (0.5,3.5) {}; 
	\vertex[fill, minimum size=3pt] at (0.5,4.5) {};
	\vertex[fill, minimum size=3pt] at (0.5,5.5) {}; 
	\vertex[fill, minimum size=3pt] at (0.5,6.5) {};
	\draw (1.5,5.5)--(2.5,6.5);
	\draw[very thick, color=red] (1,7)--(4,7);	
	\node at (2,-1) {\footnotesize$S(U_3)=29$};	
\end{scope}
\begin{scope}[scale=1, xshift=450]
	\draw[fill, color=gray!85] (0,0) rectangle (1,3);
	\draw[fill, color=gray!85] (1,3) rectangle (2,5);
	\draw[fill, color=gray!85] (2,5) rectangle (3,6);
	\draw[fill, color=gray!85] (3,6) rectangle (4,7);
	\draw[very thin, color=gray!100] (0,0) grid (4,7);		

	\vertex[fill, minimum size=3pt] at (2.5,6.5) {}; 
	\vertex[fill, minimum size=3pt] at (1.5,5.5) {}; 
	\vertex[fill, minimum size=3pt] at (1.5,6.5) {}; 	
	\vertex[fill, minimum size=3pt] at (0.5,3.5) {}; 
	\vertex[fill, minimum size=3pt] at (0.5,4.5) {};
	\vertex[fill, minimum size=3pt] at (0.5,5.5) {}; 
	\vertex[fill, minimum size=3pt] at (0.5,6.5) {};
	\draw (0.5,4.5)--(1.5,6.5);	
	\draw (0.5,3.5)--(1.5,5.5);
	\draw[very thick, color=red] (1,7)--(4,7);	
	\node at (2,-1) {\footnotesize$S(U_4)=29$};	
\end{scope}
\begin{scope}[scale=1, xshift=600]
	\draw[fill, color=gray!85] (0,0) rectangle (1,3);
	\draw[fill, color=gray!85] (1,3) rectangle (2,5);
	\draw[fill, color=gray!85] (2,5) rectangle (3,6);
	\draw[fill, color=gray!85] (3,6) rectangle (4,7);
	\draw[very thin, color=gray!100] (0,0) grid (4,7);		

	\vertex[fill, minimum size=3pt] at (2.5,6.5) {}; 
	\vertex[fill, minimum size=3pt] at (1.5,5.5) {}; 
	\vertex[fill, minimum size=3pt] at (1.5,6.5) {}; 	
	\vertex[fill, minimum size=3pt] at (0.5,3.5) {}; 
	\vertex[fill, minimum size=3pt] at (0.5,4.5) {};
	\vertex[fill, minimum size=3pt] at (0.5,5.5) {}; 
	\vertex[fill, minimum size=3pt] at (0.5,6.5) {};
	\draw (1.5,6.5)--(2.5,6.5);
	\draw (0.5,3.5)--(1.5,5.5);
	\draw[very thick, color=red] (1,7)--(4,7);	
	\node at (2,-1) {\footnotesize$S(U_5)=64$};	
\end{scope}
\begin{scope}[scale=1, xshift=750]
	\draw[fill, color=gray!85] (0,0) rectangle (1,3);
	\draw[fill, color=gray!85] (1,3) rectangle (2,5);
	\draw[fill, color=gray!85] (2,5) rectangle (3,6);
	\draw[fill, color=gray!85] (3,6) rectangle (4,7);
	\draw[very thin, color=gray!100] (0,0) grid (4,7);		

	\vertex[fill, minimum size=3pt] at (2.5,6.5) {}; 
	\vertex[fill, minimum size=3pt] at (1.5,5.5) {}; 
	\vertex[fill, minimum size=3pt] at (1.5,6.5) {}; 	
	\vertex[fill, minimum size=3pt] at (0.5,3.5) {}; 
	\vertex[fill, minimum size=3pt] at (0.5,4.5) {};
	\vertex[fill, minimum size=3pt] at (0.5,5.5) {}; 
	\vertex[fill, minimum size=3pt] at (0.5,6.5) {};
	\draw (0.5,3.5)--(1.5,5.5)--(2.5,6.5);
	\draw[very thick, color=red] (1,7)--(4,7);	
	\node at (2,-1) {\footnotesize$S(U_6)=64$};	
\end{scope}
\begin{scope}[scale=1, xshift=900]
	\draw[fill, color=gray!85] (0,0) rectangle (1,3);
	\draw[fill, color=gray!85] (1,3) rectangle (2,5);
	\draw[fill, color=gray!85] (2,5) rectangle (3,6);
	\draw[fill, color=gray!85] (3,6) rectangle (4,7);
	\draw[very thin, color=gray!100] (0,0) grid (4,7);		

	\vertex[fill, minimum size=3pt] at (2.5,6.5) {}; 
	\vertex[fill, minimum size=3pt] at (1.5,5.5) {}; 
	\vertex[fill, minimum size=3pt] at (1.5,6.5) {}; 	
	\vertex[fill, minimum size=3pt] at (0.5,3.5) {}; 
	\vertex[fill, minimum size=3pt] at (0.5,4.5) {};
	\vertex[fill, minimum size=3pt] at (0.5,5.5) {}; 
	\vertex[fill, minimum size=3pt] at (0.5,6.5) {};
	\draw (0.5,3.5)--(1.5,5.5)--(2.5,6.5);
	\draw (0.5,4.5)--(1.5,6.5);
	\draw[very thick, color=red] (1,7)--(4,7);
	\node at (2,-1) {\footnotesize$S(U_7)=29$};
\end{scope}
\end{tikzpicture}
\end{center}
Under the $\bbZ/2\bbZ$-action described in Theorem~\ref{thm.kparkingtriangle}, the orbits 
are $\{U_1, U_4\}$, $\{U_3,U_7\}$, $\{U_5,U_6\}$, and $\{U_2\}$.
For example, counting the possible labeled Dyck path completions arising from the $\{U_1, U_4\}$ orbit, we have
$$S(U_1)+S(U_4) = \sum_{i=0}^4{7\choose i} + \sum_{i=0}^2{7\choose i} = \sum_{i=0}^7{7\choose i} =2^7. $$
Summing over all orbits, 
$$\left|\, \mathcal{SU}_{\car62}^{(2,0)}\right|= \sum_{\calO} \sum_{U\in \calO} S(U) = 2^7 +2^7 + 2^7 + 2^6 = 7\cdot 2^6.$$
\end{example}

\begin{remark}
At $k=1$, equation~\eqref{eqn.stdU}, Theorem~\ref{thm.theta}, and Theorem~\ref{thm.kparkingtriangle} recover the analogous results for the classical caracol graph, proved in~\cite[Proposition 5.1, Theorem 5.6, and Theorem 5.9]{BGHHKMY}.

At $k=n-1$, Theorems~\ref{thm.theta} and~\ref{thm.kparkingtriangle} reduce to
$$\left|\,\calU_{\car{n+1}{n-1}}^{(n-1,i)}\right|
	= T_{n-1}(0,i) = 1, \qquad\hbox{and}\qquad
\left|\, \mathcal{SU}_{\car{n+1}{n-1}}^{(n-1,i)}\right|
	= (n-1)^{n-3-i},	
$$
where $i$ is necessarily $0$. These are the same results obtained for the Pitman--Stanley graph $\PS_n$ in~\cite{BGHHKMY}.
\end{remark}

\subsection{Volume of the $k$-caracol polytope}
Having developed all the tools necessary, we conclude this section with the computation that yields the volume of the flow polytope of $\car{n+1}{k}$ with net flow $\ba=(1,\ldots, 1,-n)$.

\begin{theorem}\label{thm.oneoneone}
For $k\in \bbN$ and $n>k$, let $a=n-k$ and $b=k(n-k)-1$. Then
$$\vol\calF_{\car{n+1}{k}}(1,\ldots, 1,-n)
=\Cat(a,b)\cdot k^{b-1}\cdot n^{a-1}.$$
\end{theorem}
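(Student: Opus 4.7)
The plan is to combine the parking function version of the Lidskii formula (Theorem~\ref{thm.parkinglidskii}) with the level stratification \eqref{eqn.stdU} and the count of standardized unified diagrams from Theorem~\ref{thm.kparkingtriangle}, then finish by a short binomial computation that extracts the three factors $\Cat(a,b)$, $k^{b-1}$, and $n^{a-1}$.

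First I would apply Theorem~\ref{thm.parkinglidskii} to rewrite $\vol\calF_{\car{n+1}{k}}(1,\ldots,1,-n)=|\,\calU_G|$ and then stratify via \eqref{eqn.stdU}. For $G=\car{n+1}{k}$ one has $t_1+\cdots+t_k=(k-1)(n-k)+(n-k-1)=b$ and $m-n=(k+1)a-2=a+b-1$, so the sum over $i$ runs from $0$ to $a-1$. Inserting Theorem~\ref{thm.kparkingtriangle} together with the expansion $T_k(a-1,i)=a^{i-1}\binom{a+b-1-i}{a-1-i}$ and the identity $k^{(k+1)a-3-i}=k^{a+b-2-i}$ reduces the theorem to the purely algebraic statement
$$\sum_{i=0}^{a-1}\binom{a+b-1}{i}\binom{a+b-1-i}{a-1-i}\,a^{i-1}\,k^{a+b-2-i}=\Cat(a,b)\,k^{b-1}\,n^{a-1}.$$

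The key manipulation is the factorial identity
$$\binom{a+b-1}{i}\binom{a+b-1-i}{a-1-i}=\binom{a-1}{i}\binom{a+b-1}{a-1},$$
which separates the $i$-dependence from the $(a,b)$-dependence. After pulling the constant $\binom{a+b-1}{a-1}$ outside the sum and factoring out $k^{a+b-2}/a$, what remains is $\sum_{i=0}^{a-1}\binom{a-1}{i}(a/k)^i=(1+a/k)^{a-1}$ by the binomial theorem. Since $a+k=n$, this parenthesized factor equals $n^{a-1}/k^{a-1}$; canceling $k^{a-1}$ against the prefactor leaves $k^{b-1}$, and recognizing $\tfrac{1}{a}\binom{a+b-1}{a-1}=\tfrac{1}{a+b}\binom{a+b}{a}=\Cat(a,b)$ completes the computation. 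Essentially the entire combinatorial and geometric content is already packaged into Theorems~\ref{thm.parkinglidskii} and~\ref{thm.kparkingtriangle}, so the only substantive step here is spotting the factorial identity that separates the sum, and the pleasant feature is that it produces exactly the three factors appearing in the theorem.
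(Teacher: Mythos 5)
Your proposal is correct and follows essentially the same route as the paper: Theorem~\ref{thm.parkinglidskii} plus the stratification~\eqref{eqn.stdU} and Theorem~\ref{thm.kparkingtriangle}, followed by the same binomial manipulation (your identity $\binom{a+b-1}{i}\binom{a+b-1-i}{a-1-i}=\binom{a-1}{i}\binom{a+b-1}{a-1}$ is exactly the factorial regrouping the paper performs before applying the binomial theorem to get $(kx+a)^{a-1}=n^{a-1}$). No gaps.
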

\begin{proof} Combining Theorems~\ref{thm.parkinglidskii} and Equation~\eqref{eqn.stdU}, 
$$\vol\calF_{\car{n+1}{k}}(1,\ldots, 1,-n)
= \left|\,\calU_{\car{n+1}{k}}\right|
= \sum_{i=0}^{n-k-1} {m-n\choose i} \left|\,\mathcal{SU}_{\car{n+1}{k}}^{(k,i)}\right|.
$$
We have $m-n= (k+1)(n-k)-2 = a+b-1$. Applying Theorem~\ref{thm.kparkingtriangle}, we have
\begin{align*}
\vol\calF_{\car{n+1}{k}}(1,\ldots, 1,-n)
&= \sum_{i=0}^{a-1} {a+b-1\choose i} {a+b-1-i\choose b} a^{i-1} k^{a+b-2-i}\\
&= \frac{1}{a}\frac{(a+b-1)!}{b!(a-1)!} \cdot k^{b-1} \cdot
	\sum_{i=0}^{a-1} \frac{(a-1)!}{i!(a-1-i)!} a^{i} k^{a-1-i}\\	
&= \Cat(a,b) \cdot k^{b-1} \cdot n^{a-1},
\end{align*}
as claimed.
\end{proof}

\begin{remark}
At $k=1$, this recovers the result for the classical caracol graph~\cite[Theorem 5.10]{BGHHKMY}, 
$$\vol\calF_{\Car_{n+1}}(1,\ldots, 1,-n) = \Cat(n-2) \cdot n^{n-2}.$$

At $k=n-1$, we have $a=n-k=1$ and $b=ka-1=n-2$, so $\Cat(a,b)=\Cat(1,n-2)=1$, and we recover the result for the Pitman--Stanley graph,
$$\vol \calF_{\PS_n}(1,\ldots, 1, -(n-1)) = k^{b-1} = (n-1)^{n-3}.$$
\end{remark}

\section{The $k$-caracol polytope at other net flows} \label{sec.abbb}
The tools and combinatorial objects developed in the previous section can be augmented for some cases of more general net flow vectors. We now introduce unified diagrams for flow polytopes with net flow vector $\ba=(a_1,\ldots,a_n,-\sum_{i=1}^n a_i)$.

\begin{defn} \label{defn.unifieda}
Let $G$ be an acyclic directed graph with $n+1$ vertices and shifted out-degree vector $\bt$. A {\em unified diagram} for the flow polytope $\calF_G(\ba)$ is a type $(\bs, \sigma, \alpha, \Gamma)$ where $(\bs,\sigma)$ is a labeled $\bt$-Dyck path, $\Gamma$ is an out-degree gravity diagram for $\outgrav_G(\bs-\bt,0)$, and $\alpha$ is a vector in $[a_1]^{s_1}\times \cdots \times [a_n]^{s_n}$.  Let $\calU_G(\ba)$ denote this set of unified diagrams.
\end{defn}

We may interpret $\alpha$ as a second labeling on the north steps of the $\bt$-Dyck path, where the north steps in the $j$-th column can have a label chosen from $\{1,\ldots, a_j\}$ in any order.  We call $\alpha$ the {\em net flow label}. Observe that if any $a_j$ in the net flow vector is $0$, then the $\bt$-Dyck path in a corresponding unified diagram cannot have any north steps in its $j$-th column. Indeed, when $\ba=(1,0,\ldots, 0,-1)$, the set of unified diagrams for $\calF_G(\ba)$ is effectively just the set of out-degree gravity diagrams $\outgrav_G(\bv_{\mathrm{out}})$ because the only $\bt$-Dyck path allowed in the unified diagrams is $N^{m-n}E^{n}$ and it has a unique $\sigma$ labeling.


\begin{theorem}\label{thm.abbb}
For $k\in \bbN$ and $n>k$, let $a=n-k$ and $b=k(n-k)-1$. Let $\ba = (x^k,y^{n-k},-kx-(n-k)y)$ where $x\in \bbR_{>0}$ and $y\in \bbR_{\geq0}$. Then
$$\vol\calF_{\car{n+1}{k}}(\ba)
=\Cat(a,b)\cdot k^{b-1} x^b (kx+(n-k)y)^{a-1} .$$
\end{theorem}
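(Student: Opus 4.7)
The plan is to replay the stratification-by-level argument used in the proof of Theorem~\ref{thm.oneoneone}, but enhanced with the net flow labels introduced in Definition~\ref{defn.unifieda}. By Theorem~\ref{thm.parkinglidskii} (and its natural extension to general $\ba$ via the generalized unified diagrams of Definition~\ref{defn.unifieda}),
$$\vol\calF_{\car{n+1}{k}}(\ba) = |\calU_{\car{n+1}{k}}(\ba)|,$$
where each unified diagram is a quadruple $(\bs,\sigma,\alpha,\Gamma)$. The first key observation is that the net flow label $\alpha$ decouples from the triple $(\bs,\sigma,\Gamma)$: for a fixed $\bt$-Dyck path $\bs$, the number of choices for $\alpha$ is exactly $\ba^{\bs}=x^{s_1+\cdots+s_k}\,y^{s_{k+1}+\cdots+s_n}$. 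Since the net flow label selection is independent of the labeling $\sigma$ and of the embedded gravity diagram $\Gamma$, stratifying by the column-$k$ level $i$ (so that $s_1+\cdots+s_k=m-n-i$ and $s_{k+1}+\cdots+s_n=i$) yields the weighted refinement of equation~\eqref{eqn.stdU}:
$$|\calU_{\car{n+1}{k}}(\ba)|=\sum_{i=0}^{a-1}\binom{m-n}{i}\,x^{m-n-i}\,y^{i}\cdot\bigl|\mathcal{SU}_{\car{n+1}{k}}^{(k,i)}\bigr|.$$
This reduction needs the observation that $\bt$ is independent of $\ba$, so the shape constraints on $\bs$ and the gravity diagram $\Gamma$ are exactly the same as in the case $\ba=(1,\ldots,1,-n)$.

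Next I would substitute the count of standardized level-$(k,i)$ unified diagrams from Theorem~\ref{thm.kparkingtriangle},
$$\bigl|\mathcal{SU}_{\car{n+1}{k}}^{(k,i)}\bigr|=k^{(k+1)(n-k)-3-i}\cdot T_k(n-k-1,i)=k^{a+b-2-i}\cdot a^{i-1}\binom{a+b-1-i}{b},$$
and rewrite $m-n=a+b-1$. Factoring $k^{b-1}$ and $x^{b}$ out of the sum gives
$$|\calU_{\car{n+1}{k}}(\ba)|=k^{b-1}x^{b}\cdot\frac{(a+b-1)!}{(a-1)!\,b!}\cdot\frac{1}{a}\sum_{i=0}^{a-1}\binom{a-1}{i}(ay)^{i}(kx)^{a-1-i},$$
after merging the two binomials into $\binom{a+b-1}{i}\binom{a+b-1-i}{b}=\frac{(a+b-1)!}{(a-1)!\,b!}\binom{a-1}{i}$, exactly as in the proof of Theorem~\ref{thm.oneoneone}.

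The sum is now a straightforward binomial expansion, giving $(kx+ay)^{a-1}=(kx+(n-k)y)^{a-1}$, and the prefactor simplifies to $\Cat(a,b)\cdot k^{b-1} x^{b}$, completing the argument. The one genuine subtlety will be the passage from the combinatorial statement (which is transparent for integer $x,y\geq 1$, where the labels can be taken from $[x]$ and $[y]$) to the announced real-valued identity. I would resolve this by noting that both sides are polynomials in $(x,y)$ of the same total degree and they agree on the Zariski-dense set of positive integers, so they coincide as polynomials; the model behind Theorem~\ref{thm.parkinglidskii} already hinges on this polynomiality, so no new input is required. The main obstacle, in short, is not algebraic but notational: verifying carefully that the stratification, the $\bbZ/k\bbZ$-action of Theorem~\ref{thm.kparkingtriangle}, and the decomposition of $(\bs,\sigma)$ into initial and final pieces all commute with the net-flow-labeling layer $\alpha$, so that the counts from Section~\ref{sec.kcaracol} can be reused verbatim.
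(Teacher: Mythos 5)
Your proposal is correct and follows essentially the same route as the paper: stratify $\calU_{\car{n+1}{k}}(\ba)$ by the column-$k$ level to get $\sum_i \binom{m-n}{i}x^{m-n-i}y^i\,|\mathcal{SU}^{(k,i)}_{\car{n+1}{k}}|$, substitute Theorem~\ref{thm.kparkingtriangle}, and collapse the sum by the binomial theorem to $(kx+ay)^{a-1}$ with prefactor $\Cat(a,b)k^{b-1}x^b$. Your added remark on extending from integer to real $x,y$ by polynomiality is a reasonable bit of extra care that the paper leaves implicit, but it does not change the argument.
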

\begin{proof}
Similar to Equation~\eqref{eqn.stdU}, when we partition the set of unified diagrams $\calU_G(\ba)$ according to standardized level-$(k,i)$ unified diagrams, there are ${m-n\choose i}$ ways to choose a parking function label set of size $i$ for the standardization, $x^{m-n-i}$ ways to choose net flow labels for the north steps of the $\bt$-Dyck path in the first $k$ columns, and $y^i$ ways to choose net flow labels for the remaining columns.
Thus we have
\begin{equation}\label{eqn.logconcave}
\vol\calF_{\car{n+1}{k}}(\ba)
= \Big|\,\calU_{\car{n+1}{k}}(\ba)\Big|
= \sum_{i=0}^{n-k-1} {m-n\choose i} x^{m-n-i} y^i \Big|\,\mathcal{SU}_{\car{n+1}{k}}^{(k,i)}\Big|.
\end{equation}
Applying Theorem~\ref{thm.kparkingtriangle} with $m-n=(k+1)(n-k)-2=a+b-1$, we compute
\begin{align*}
\vol\calF_{\car{n+1}{k}}(\ba)
&= \sum_{i=0}^{a-1} {a+b-1\choose i} x^{a+b-1-i} y^i
		{a+b-1-i\choose b} a^{i-1}k^{a+b-2-i}\\
&= (kx)^{b-1} x \cdot \Cat(a,b)\cdot 
	\sum_{i=0}^{a-1} {a-1\choose i} (kx)^{a-1-i} (ay)^i \\
&= (kx)^{b-1} x \cdot \Cat(a,b)\cdot (kx +ay)^{a-1},
\end{align*}
and obtain a generalization of Theorem~\ref{thm.oneoneone}.
\end{proof}

\begin{corollary}
For $k\in \bbN$ and $n>k$, let $a=n-k$ and $b=k(n-k)-1$. Then
$$\vol\calF_{\car{n+1}{k}}(\underbrace{1,\ldots,1}_k,\underbrace{0,\ldots, 0}_{n-k},-k)
=\Cat(a,b)\cdot k^{a+b-2}.$$
\end{corollary}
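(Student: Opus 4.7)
The plan is to recognize this as the specialization $x=1$, $y=0$ of Theorem~\ref{thm.abbb}. Indeed, with these values the net flow vector
$$\ba = (x^k, y^{n-k}, -kx - (n-k)y)$$
reduces precisely to $(1^k, 0^{n-k}, -k)$, which is admissible since the hypothesis of Theorem~\ref{thm.abbb} only requires $x \in \bbR_{>0}$ and $y \in \bbR_{\geq 0}$.

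Substituting $x = 1$ and $y = 0$ into the formula of Theorem~\ref{thm.abbb} yields
$$\vol \calF_{\car{n+1}{k}}(\ba) = \Cat(a,b) \cdot k^{b-1} \cdot 1^{b} \cdot (k \cdot 1 + (n-k) \cdot 0)^{a-1} = \Cat(a,b) \cdot k^{b-1} \cdot k^{a-1},$$
which simplifies to $\Cat(a,b) \cdot k^{a+b-2}$, as desired.

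If one wishes to see the statement directly from the combinatorial model rather than as a corollary, the only simplification needed is in Equation~\eqref{eqn.logconcave}: setting $y = 0$ collapses the sum to its $i=0$ term, since $y^i = 0$ for $i \geq 1$. Thus
$$\vol \calF_{\car{n+1}{k}}(1^k, 0^{n-k}, -k) = \binom{m-n}{0} \cdot 1^{m-n} \cdot \left|\,\mathcal{SU}_{\car{n+1}{k}}^{(k,0)}\right|,$$
and Theorem~\ref{thm.kparkingtriangle} at $i=0$ gives $k^{(k+1)(n-k)-3} \cdot T_k(n-k-1, 0) = k^{a+b-2} \cdot \Cat(a,b)$, since $T_k(n-k-1,0) = \Cat(n-k, k(n-k)-1) = \Cat(a,b)$. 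Combinatorially, this reflects the fact that net flow label $0$ in the final $n-k$ coordinates forbids the associated columns from containing north steps in any contributing unified diagram, so only level-$(k,0)$ standardized diagrams survive. No real obstacle arises here; the result is a clean specialization of the machinery already in place.
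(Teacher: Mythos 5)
Your proposal is correct and matches the paper exactly: the corollary is stated as an immediate specialization of Theorem~\ref{thm.abbb} at $x=1$, $y=0$, and your arithmetic $k^{b-1}\cdot k^{a-1}=k^{a+b-2}$ is right. Your second, direct argument via Equation~\eqref{eqn.logconcave} collapsing to the $i=0$ term is precisely the observation the paper makes in the remark immediately following the corollary, so nothing is missing.
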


\begin{remark}
When $a_{k+1}=\cdots=a_n=0$, then the $\bt$-Dyck paths in the unified diagrams for $\calF_{\car{n+1}{k}}(1^k,0^{n-k},-k)$ can only have north steps in the first $k$ columns.  In other words, 
$$\vol\calF_{\car{n+1}{k}}(\underbrace{1,\ldots,1}_k,\underbrace{0,\ldots, 0}_{n-k},-k)
= \Cat(a,b)\cdot k^{a+b-2}
= \Big|\,\mathcal{SU}_{\car{n+1}{k}}^{(k,0)} \Big|$$
is the number of standardized level-$(k,0)$ unified diagrams, in agreement with Theorem~\ref{thm.kparkingtriangle}.
\end{remark}

\subsection{Log-concavity of the $k$-parking numbers}
Let $G=\car{n+1}{k}$ and $\ba=(x^k, y^{n-k}, -kx-(n-k)y)$ such that $x\in \bbR_{>0}$ and $y\in \bbR_{\geq0}$.
By a result of Baldoni and Vergne~\cite[Section 3.4]{BV}, the flow polytope $\calF_G(\ba)$ can be expressed as the Minkowski sum
$$\calF_G(\ba) = x\calF_G(\underbrace{1,\ldots,1}_k, \underbrace{0,\ldots,0}_{n-k}, -k)
+ y\calF_G(\underbrace{0,\ldots,0}_k, \underbrace{1,\ldots,1}_{n-k}, -(n-k)). $$ 
The {\em Aleksandrov-Fenchel inequalities}~\cite{A,F1,F2} state that there exists $V_i \in \bbR_{\geq0}$ such that for polytopes $P$ and $Q$,
$$\vol(xP+yQ) = \sum_{i=0}^d {d\choose i} x^{d-i} y^i V_i,  $$
and moreover, the $V_i$ are {\em log-concave} so that
$V_i^2 \geq V_{i-1}V_{i+1}$ for all $i$.
Combining our Equation~\eqref{eqn.logconcave} with Theorem~\ref{thm.kparkingtriangle}, we have
\begin{align*}
\vol\calF_G(\ba) 
&= \sum_{i=0}^{n-k-1} {m-n\choose i}x^{m-n-i}y^i |\,\mathcal{SU}_G^{(k,i)}|\\
&= \sum_{i=0}^{n-k-1} {m-n\choose i} (kx)^{m-n-i}y^i k^{-1} T_k(n-k-1,i), 
\end{align*}
so the Aleksandrov-Fenchel inequalities imply that the $k$-parking numbers $T_k(n-k-1,i)$
for fixed $n$ and $k$, and $i=0,\ldots, n-k-1$ are log-concave. See the Appendix for some values.



\section{A multigraph related to the $k$-caracol graph} \label{sec.mcar}

In the previous section, we applied techniques developed in~\cite{BGHHKMY} to compute the volumes of flow polytopes of graphs which are not planar.  In this section, we will see that there is a family of planar multigraphs which give rise to flow polytopes with volume formulas that are similar to the formulas of the previous sections.  

\subsection{Gravity diagrams for the $k$-multicaracol graph}
We next define the family of {\em $k$-multicaracol graphs}.
\begin{defn} Let $k,a\in \bbN$. The directed graph $G=\mcar{a+2}{k}$ on the vertex set $\{0,1,\ldots, a+1\}$ is constructed by starting with the Pitman--Stanley graph $\PS_{a+1}$, then adding the vertex $0$, and $k$ directed edges $(0,i)$ for $i=1,\ldots, a$.  
\end{defn}

\begin{figure}[ht]
\begin{center}
\begin{tikzpicture}[scale=0.9]
	\vertex[fill,label=below:\footnotesize{$0$}](a0) at (0,0) {};
	\vertex[fill,label=below:\footnotesize{$1$}](a1) at (1,0) {};
	\vertex[fill,label=below:\footnotesize{$2$}](a2) at (2,0) {};
	\vertex[fill,label=below:\footnotesize{$3$}](a3) at (3,0) {};
	\vertex[fill,label=below:\footnotesize{$4$}](a4) at (4,0) {};
		\node at (4.5,0) {$\cdots$};
	\vertex[fill,label=below :\footnotesize{$5$}](a10) at (5,0) {};
	\vertex[fill,label=below:\footnotesize{$6$}](a11) at (6,0) {};
	\vertex[fill,label=right:\footnotesize{$7$}](a12) at (7,0) {};
	\draw[-stealth, thick, color=red] (a0) to[out=30,in=130] (a1);		
	\draw[-stealth, thick] (1,0)--(1.95,0);
	\draw[-stealth, thick] (2,0)--(2.95,0);
	\draw[-stealth, thick] (3,0)--(3.95,0);
	\draw[thick] (4,0)--(4.15,0); \draw[thick] (4.75,0)--(4.85,0);
		\draw[-stealth, thick] (4.85,0)--(4.95,0);		
	\draw[-stealth, thick] (5,0)--(5.95,0);
	\draw[-stealth, thick] (6,0) to (6.95,0);
	\draw[-stealth, thick, color=red] (a0) to[out=35,in=130] (a2);
	\draw[-stealth, thick, color=red] (a0) to[out=40,in=130] (a3);
	\draw[-stealth, thick, color=red] (a0) to[out=45,in=130] (a4);
	\draw[-stealth, thick, color=red] (a0) to[out=50,in=130] (a10);
	\draw[-stealth, thick, color=red] (a0) to[out=55,in=130] (a11);
	\draw[-stealth, thick] (a10) to[out=-50,in=240] (a12);
	\draw[-stealth, thick] (a4) to[out=-50,in=240] (a12);
	\draw[-stealth, thick] (a3) to[out=-50,in=240] (a12);
	\draw[-stealth, thick] (a2) to[out=-50,in=240] (a12);
	\draw[-stealth, thick] (a1) to[out=-50,in=240] (a12);
\end{tikzpicture}
\end{center}
\caption{The graph $G=\mcar{8}{k}$. A red edge of the form $(0,i)$ in this picture represents $k$ distinct edges. 
}
\end{figure}
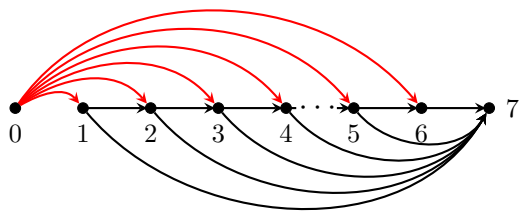

\begin{remark}
We shall see that there are many similarities between the flow polytopes $\car{n+1}{k}$ and $\mcar{a+2}{k}$, where $a=n-k$. First we note that they have the same dimension, $(k+1)(n-k)-2 = (k+1)a-2$.  If $\{f_e\}_{e\in E(\car{n+1}{k})}$ is a flow on the graph $\car{n+1}{k}$, then for each $p=1,\ldots, k-1$, the flow on the edge $(p,p+1)$ is completely determined by the flow conservation equation 
$$f_{(p,p+1)} = a_p + \sum_{(i,p)\in E(\car{n+1}{k})} f_{(i,p)}  
	- \sum_{\tiny\begin{array}{c} (p,j)\in E(\car{n+1}{k})\\ j\neq p+1 \end{array}} f_{(p,j)},$$
so if we project $\calF_{\car{n+1}{k}}(\ba)$ onto the coordinates $\{x_e\}_{e\notin \{ (i,i+1) \mid i=1,\ldots, k-1\}}$, then it may be viewed as a polytope contained in $\mcar{a+2}{k}(a_1+\cdots+a_k, a_{k+1},\ldots, a_n, -\sum a_i)$.
\end{remark}

The graph $\mcar{a+2}{k}$ has $a+2$ vertices and $m=(k+2)a-1$ edges.  Its shifted out-degree vector and shifted in-degree vector are
$$
\bt = (t_0,\ldots, t_a) = (ak-1,\underbrace{1,\ldots, 1}_{a-1},0) 
	\qquad\hbox{and}\qquad
\bu = (u_1,\ldots, u_{a+1}) = (k-1,\underbrace{k, \ldots, k}_{a-1}, a),
$$
and their coordinates sum to $m-a-1= (k+1)a-2$. We also have
$$\bv_{\mathrm{out}} = \sum_{j=0}^{a-2} (a-1-j)\alpha_j 
\qquad \hbox{and}\qquad
\bv_{\mathrm{in}} = \sum_{j=1}^a (jk-1)\alpha_j.$$

The in-degree gravity diagrams are defined on a triangular array of $jk-1$ dots in the $j$-th column for $j=1,\ldots,a$. Identical to the case of in-degree gravity diagrams for the $k$-caracol graphs, we may choose the following conventions for the in-degree gravity diagrams for $k$-multicaracol graphs:
\begin{enumerate}
\item[(a)] each line segment must be horizontal,
\item[(b)] a longer line segment must be in a row above that of a shorter line segment.
\end{enumerate}
To be precise, the set of in-degree gravity diagrams for the $k$-multicaracol graph $\mcar{a+2}{k}$ is identical to the set of in-degree gravity diagrams for the $k$-caracol graph $\car{n+1}{k}$, where $a=n-k$.
This observation immediately leads to the next result.

\begin{theorem} \label{thm.mcar_onezerozero}
For $k,a\in \bbN$,
$$\vol \calF_{\mcar{a+2}{k}} (1,0,\ldots, 0,-1) = \Cat(a, ka-1). 
$$
\end{theorem}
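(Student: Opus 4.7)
The plan is to reduce the volume computation to counting in-degree gravity diagrams, exactly as was done for the $k$-caracol graph in Theorem~\ref{thm.onezerozero}, and then to observe that these two families of diagrams coincide. Applying Corollary~\ref{cor.gravitydiagrams} to the graph $G = \mcar{a+2}{k}$, I would write
$$\vol \calF_{\mcar{a+2}{k}}(1,0,\ldots,0,-1) = |\,\ingrav_G(\bv_{\mathrm{in}})|,$$
so the task becomes purely enumerative: count the in-degree gravity diagrams for the $k$-multicaracol graph.

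Next I would verify carefully that the set of in-degree gravity diagrams for $\mcar{a+2}{k}$ is literally the same combinatorial set as the one for $\car{n+1}{k}$ under the identification $a = n-k$. This boils down to checking two things: (i) the underlying dot array agrees, which is immediate since in both cases the relevant shifted in-degree vector is $\bu=(k-1,k^{a-1},a-1)$ (in the multicaracol case this reads off the definition, and in the caracol case this is what was computed in Section~\ref{sec.gravity}), producing the triangular array with $jk-1$ dots in the $j$-th column for $j=1,\ldots,a$; and (ii) the multiset of positive roots supported on the columns indexed by $\alpha_1,\ldots,\alpha_a$ of this triangular array is the same in both graphs. Indeed, the only edges of either graph that contribute line segments to these columns are edges of the form $(i,j)$ with the target $j$ being the rightmost vertex, so the line-segment possibilities---and hence the chosen canonical gravity diagram representatives (horizontal, with longer segments above shorter ones)---are identical.

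With this identification in hand, I can directly invoke Theorem~\ref{thm.onezerozero}: the bijection $\Psi_{\mathrm{in}}$ constructed there maps these in-degree gravity diagrams to rational $(a,ka-1)$-Dyck paths, which are enumerated by $\Cat(a,ka-1)$. Concatenating,
$$\vol\calF_{\mcar{a+2}{k}}(1,0,\ldots,0,-1) = |\,\ingrav_{\mcar{a+2}{k}}(\bv_{\mathrm{in}})| = |\,\ingrav_{\car{n+1}{k}}(\bv_{\mathrm{in}})| = \Cat(a,ka-1),$$
as desired.

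The only potentially subtle point is point (ii) above: one must confirm that although the two graphs differ considerably in their "left-hand" structure (the $k$-caracol graph has $k$ initial vertices with multiple outgoing edges whereas the multicaracol graph has a single source vertex $0$ with $k$ parallel edges to each other vertex), this difference is invisible to the in-degree gravity diagrams, because all such diagrams live on columns $\alpha_1,\ldots,\alpha_a$ whose supporting roots come only from edges whose target is among the last $a+1$ vertices, and the relevant edge multiset on those columns is the same. Once that is articulated, the theorem follows with no further computation.
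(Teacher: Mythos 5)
Your proposal is correct and follows essentially the same route as the paper: both reduce the volume to counting in-degree gravity diagrams via Corollary~\ref{cor.gravitydiagrams}, observe that the in-degree gravity diagrams of $\mcar{a+2}{k}$ coincide with those of $\car{n+1}{k}$ for $a=n-k$ (since both live on the triangular array with $jk-1$ dots in column $j$ and use the same restricted root multiset, coming from the common Pitman--Stanley structure on the last $a+1$ vertices), and then invoke Theorem~\ref{thm.onezerozero}. Your extra verification of point (ii) is a welcome elaboration of what the paper states more tersely, not a different argument.
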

\begin{proof} By Corollary~\ref{cor.gravitydiagrams} and Theorem~\ref{thm.onezerozero},
$$\vol \calF_{\mcar{a+2}{k}} (1,0,\ldots, 0,-1) 
= |\ingrav_{\mcar{a+2}{k}}(\bv_{\mathrm{in}}) | 
= |\ingrav_{\car{k+a+1}{k}}(\bv_{\mathrm{in}}) | 
= \Cat(a,ka-1).$$ 
\end{proof}

\begin{remark}
In~\cite{BGHHKMY}, we introduced a polynomial for the volume of flow polytopes with properties similar to those of the Ehrhart polynomial of a polytope.
Let $G$ be a directed graph with vertex set $\{1,\ldots, n+1\}$ and $m$ edges.  For any nonnegative integer $x\in \bbZ_{\geq0}$, the directed graph $\widehat{G}(x)$ on the vertex set $\{0,1,\ldots, n+1\}$ is constructed by starting with the directed graph $G$, then adding the vertex $0$, and $x$ directed edges $(0,i)$ for $i=1,\ldots, n$. Define the polynomial
$$E_G(x) = \vol \calF_{\widehat{G}(x)}(1,0,\ldots, 0,-1). $$
In the context of this paper, $\mcar{a+2}{k} = \widehat{\PS}_{a+1}(k)$, and it follows from~\cite[Proposition 8.7]{BGHHKMY} that
\begin{equation}
\vol \calF_{\mcar{a+2}{k}} (1,0,\ldots,0,-1)
= E_{\PS_{a+1}}(k) 
= \Cat(a,ka-1).
\end{equation}
\end{remark}


By definition, the number of out-degree diagrams is equal to the number of in-degree gravity diagrams for any fixed flow polytope, so from the proof of Theorem~\ref{thm.mcar_onezerozero}, we also know that 
$$|\outgrav_{\mcar{a+2}{k}}(\bv_{\mathrm{out}}) | 
= |\outgrav_{\car{n+1}{k}}(\bv_{\mathrm{out}}) | = \Cat(a,ka-1),$$
where $a=n-k$.  We can prove this result directly via a bijection, which will be used in a later result. But first, we need to describe our conventions for the out-degree gravity diagrams for $\mcar{a+2}{k}$.  These out-degree gravity diagrams are defined on a triangular array of $a-1-j$ dots in the $j$-th column for $j=0,\ldots, a-2$, so every non-trivial line segment begins in the zero-th column indexed by $\alpha_0$, and moreoever, each line segment (including the trivial one of length zero) which begins in the zero-th column is assigned one of $k$ colours $c_1,\ldots, c_k$. In addition, we choose the following conventions for the out-degree gravity diagrams:
\begin{enumerate}
\item[(a)] each line segment must be horizontal,
\item[(b)] a longer line segment must be in a row above that of a shorter line segment,
\item[(c)] and if there are two line segments of the same length but different colours, say $c_p$ and $c_q$ with $p<q$, then the line segment of colour $c_p$ lies in a row above the line segment of colour $c_q$.
\end{enumerate}
See the diagram on the right side of Figure~\ref{fig.mcarcar} for an example of an out-degree gravity diagram for $\mcar{10}{3}$.

\begin{prop}\label{prop.mcarcar}
For $k,a\in \bbN$, 
$|\outgrav_{\mcar{a+2}{k}}(\bv_{\mathrm{out}})| = \Cat(a,ka-1)$.
\end{prop}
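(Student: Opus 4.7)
The plan is to construct a direct bijection between $\outgrav_{\mcar{a+2}{k}}(\bv_{\mathrm{out}})$ and the set of partitions fitting inside the staircase $\lambda(a,k) = (k(a-1)-1, k(a-2)-1, \ldots, 2k-1, k-1)$. Combined with Definition~\ref{defn.psiout} and the bijection $\Psi_{\mathrm{out}}$ of Proposition~\ref{prop.outgrav}, this identifies $\outgrav_{\mcar{a+2}{k}}(\bv_{\mathrm{out}})$ with $\calD(a,ka-1)$, giving the count $\Cat(a,ka-1)$.

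First I would parametrize the diagrams. Every nontrivial line segment begins in the $\alpha_0$-column, which has $a-1$ dots, and each segment based in that column (including the trivial length-zero ones, which record the choice of copy among the $k$ copies of edge $(0,1)$) carries a color from $\{c_1,\ldots,c_k\}$. So the $a-1$ dots of that column each give rise to one (length, color) pair, and the conventions (a)--(c) on out-degree gravity diagrams uniquely order these pairs from top to bottom as $((\ell_1,c_1),\ldots,(\ell_{a-1},c_{a-1}))$, with $\ell_i\in\{0,1,\ldots,a-1-i\}$ (so that the segment fits in the $i$-th row of the triangular array), $c_i\in\{1,\ldots,k\}$ (identifying $c_j$ with $j$), $\ell_1\ge\cdots\ge\ell_{a-1}$, and $c_i\le c_{i+1}$ whenever $\ell_i=\ell_{i+1}$.

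Next I would set $q_i := k\ell_i+(k-c_i)$ for $i=1,\ldots,a-1$. The bounds on $\ell_i$ and $c_i$ give $q_i\in\{0,1,\ldots,k(a-i)-1\}$, and the sequence $(q_1,\ldots,q_{a-1})$ is weakly decreasing: if $\ell_i>\ell_{i+1}$ then $q_i\ge k\ell_i\ge k\ell_{i+1}+k>q_{i+1}$, while if $\ell_i=\ell_{i+1}$ then $q_i-q_{i+1}=c_{i+1}-c_i\ge 0$. Hence $(q_1,\ldots,q_{a-1})$ is a subpartition of $\lambda(a,k)$. The inverse assigns $\ell_i=\lfloor q_i/k\rfloor$ and $c_i=k-(q_i\bmod k)$; monotonicity of the floor gives $\ell_i\ge\ell_{i+1}$ from $q_i\ge q_{i+1}$, and when $\ell_i=\ell_{i+1}$ the residue inequality yields $c_i\le c_{i+1}$. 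So the map is bijective, and composing with $\Psi_{\mathrm{out}}$ completes the count.

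The main point of care is the parametrization step: articulating why each dot in the $\alpha_0$-column contributes exactly one colored segment (a direct consequence of the multi-edge structure at vertex $0$), and tracking how the row-fit constraint $\ell_i\le a-1-i$ interacts with (but is not implied by) the ordering $\ell_1\ge\cdots\ge\ell_{a-1}$. Once the parametrization is pinned down, the arithmetic of the $q_i$-encoding is routine, and the conclusion follows by invoking $\Psi_{\mathrm{out}}$.
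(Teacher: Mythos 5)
Your proposal is correct, and it takes a different route from the paper. The paper proves this proposition by reduction: it constructs a bijection $\Xi:\outgrav_{\car{k+a+1}{k}}(\bv_{\mathrm{out}})\rightarrow\outgrav_{\mcar{a+2}{k}}(\bv_{\mathrm{out}})$ that ``projects'' the first $k$ columns of a $k$-caracol gravity diagram onto the $\alpha_0$-column, recording the starting column $i$ of each segment as the colour $c_i$, and then invokes Proposition~\ref{prop.outgrav} to get the count. You instead bypass the $k$-caracol graph and encode each coloured segment directly as $q_i=k\ell_i+(k-c_i)$, landing on subpartitions of $\lambda(a,k)$ and hence on $\calD(a,ka-1)$. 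It is worth noting that your encoding is exactly the composite $\Psi_{\mathrm{out}}\circ\Xi^{-1}$ made arithmetic: under $\Xi^{-1}$ a segment of length $\ell$ with colour $c_i$ becomes the segment $[i,k+\ell]$ in the $k$-caracol diagram, and $\Psi_{\mathrm{out}}$ places its right endpoint in column $(k+\ell-k)(k-1)+(k+\ell-i)=k\ell+(k-i)$, which is your $q$. So the two proofs carry the same combinatorial content, but yours is more self-contained (it needs only the standard identification of rational $(a,b)$-Dyck paths with subpartitions of $\lambda(a,k)$, not the full machinery of Lemmas~\ref{lem.lineproperties} and~\ref{lem.rightendpoints}), while the paper's version buys the explicit bijection $\Xi$, which is reused later in the proof of Theorem~\ref{thm.mcarabbb} to transport truncated unified diagrams; if you adopted your proof wholesale you would still need to define $\Xi$ for that purpose. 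Your parametrization step and the verification of the inverse are both sound, including the point that the row-fit bound $\ell_i\leq a-1-i$ is an independent constraint correctly captured by $q_i\leq k(a-i)-1$.
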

\begin{proof} We construct a bijection $\Xi: \outgrav_{\car{k+a+1}{k}}(\bv_{\mathrm{out}})\rightarrow \outgrav_{\mcar{a+2}{k}}(\bv_{\mathrm{out}})$ between the sets of gravity diagrams.  For the remainder of this proof, we let $\Car = \car{k+a+1}{k}$ and $\MCar = \mcar{a+2}{k}$ to simplify the notation.

Heuristically, the multigraph $\MCar$ is obtained by contracting the path of length $k-1$ on the vertices $1,\ldots, k$ in $\Car$, to the vertex $0$ in $\MCar$. 
The essential observation here is that the $k$ edges $(0,j)$ in $\MCar$ come from the $k$ edges $(i,j)$ in $\Car$ for $i=1,\ldots, k$, so a line segment in an out-degree gravity diagram for $\MCar$ which is coloured $c_i$ should be thought of as representing a positive root $\alpha_i + \cdots$ in $\Phi_{\Car}^+$.

With these observations, we define $\Xi: \outgrav_{\Car}(\bv_{\mathrm{out}})\rightarrow \outgrav_{\MCar}(\bv_{\mathrm{out}})$ as follows. Given an out-degree gravity diagram $\Gamma\in \outgrav_{\Car}$, we define $\Xi(\Gamma)$ to be the diagram obtained by `projecting' the first $k$ columns of $\Gamma$ to the zero-th column of $\Xi(\Gamma)$, where each line segment in $\Gamma$ that begins in the $i$-th column is assigned the colour $c_i$ in $\Xi(\Gamma)$.

The map $\Xi$ is well-defined because the array of dots in columns $j=k,\ldots, n-2$ in an out-degree gravity diagram for $\MCar$ is the same as the array of dots in an out-degree gravity diagram for $\Car$.  Moreover, every nontrivial line segment in $\Gamma$ contains a dot from the $k$-th column.  The conventions for the the respective out-degree gravity diagrams were chosen so that the horizontal line segments will appear in the same order.

To reverse the map $\Xi$, simply take an out-degree gravity diagram for $\MCar$ and for each line segment coloured $c_i$, extend it to a line segment which begins in the $i$-th column in the out-degree gravity diagram for $\Car$.  So $\Xi$ is a bijection.

Together with Proposition~\ref{prop.outgrav}, the result follows.
\end{proof}

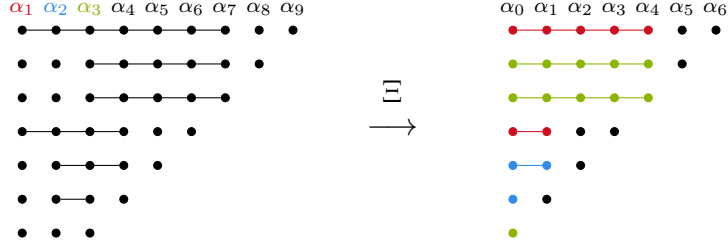
\begin{figure} [ht!]
\begin{center}
\begin{tikzpicture}[scale=0.5]

\begin{scope}[xshift=0, scale=0.9]
	\vertex[fill, minimum size=3pt, label=above:\tiny$\textcolor{lava}{\alpha_1}$](a10) at (0,0) {}; 	
	\vertex[fill, minimum size=3pt, label=above:\tiny$\textcolor{bleudefrance}{\alpha_2}$](a20) at (1,0) {}; 
	\vertex[fill, minimum size=3pt, label=above:\tiny$\textcolor{applegreen}{\alpha_3}$](a30) at (2,0) {};
	\vertex[fill, minimum size=3pt, label=above:\tiny$\alpha_4$](a40) at (3,0) {};
	\vertex[fill, minimum size=3pt, label=above:\tiny$\alpha_5$](a50) at (4,0) {};
	\vertex[fill, minimum size=3pt, label=above:\tiny$\alpha_6$](a60) at (5,0) {};
	\vertex[fill, minimum size=3pt, label=above:\tiny$\alpha_7$](a70) at (6,0) {};
	\vertex[fill, minimum size=3pt, label=above:\tiny$\alpha_8$](a80) at (7,0) {};
	\vertex[fill, minimum size=3pt, label=above:\tiny$\alpha_9$](a90) at (8,0) {};			
	\vertex[fill, minimum size=3pt](a11) at (0,-1) {}; 
	\vertex[fill, minimum size=3pt](a12) at (0,-2) {}; 
	\vertex[fill, minimum size=3pt](a13) at (0,-3) {}; 
	\vertex[fill, minimum size=3pt](a14) at (0,-4) {}; 
	\vertex[fill, minimum size=3pt](a15) at (0,-5) {}; 
	\vertex[fill, minimum size=3pt](a16) at (0,-6) {}; 
	\vertex[fill, minimum size=3pt](a21) at (1,-1) {}; 
	\vertex[fill, minimum size=3pt](a22) at (1,-2) {}; 
	\vertex[fill, minimum size=3pt](a23) at (1,-3) {};	
	\vertex[fill, minimum size=3pt](a24) at (1,-4) {}; 
	\vertex[fill, minimum size=3pt](a25) at (1,-5) {}; 
	\vertex[fill, minimum size=3pt](a26) at (1,-6) {};
	\vertex[fill, minimum size=3pt](a31) at (2,-1) {}; 
	\vertex[fill, minimum size=3pt](a32) at (2,-2) {}; 
	\vertex[fill, minimum size=3pt](a33) at (2,-3) {};	
	\vertex[fill, minimum size=3pt](a34) at (2,-4) {}; 
	\vertex[fill, minimum size=3pt](a35) at (2,-5) {}; 
	\vertex[fill, minimum size=3pt](a36) at (2,-6) {};
	\vertex[fill, minimum size=3pt](a41) at (3,-1) {}; 
	\vertex[fill, minimum size=3pt](a42) at (3,-2) {}; 
	\vertex[fill, minimum size=3pt](a43) at (3,-3) {};
	\vertex[fill, minimum size=3pt](a44) at (3,-4) {}; 
	\vertex[fill, minimum size=3pt](a45) at (3,-5) {};
	\vertex[fill, minimum size=3pt](a51) at (4,-1) {}; 
	\vertex[fill, minimum size=3pt](a52) at (4,-2) {}; 
	\vertex[fill, minimum size=3pt](a53) at (4,-3) {};
	\vertex[fill, minimum size=3pt](a54) at (4,-4) {};
	\vertex[fill, minimum size=3pt](a61) at (5,-1) {}; 
	\vertex[fill, minimum size=3pt](a62) at (5,-2) {}; 
	\vertex[fill, minimum size=3pt](a63) at (5,-3) {};
	\vertex[fill, minimum size=3pt](a71) at (6,-1) {}; 
	\vertex[fill, minimum size=3pt](a72) at (6,-2) {};
	\vertex[fill, minimum size=3pt](a81) at (7,-1) {};	
	\draw (a10) to (a70);
	\draw (a31) to (a71);
	\draw (a32) to (a72);
	\draw (a13) to (a43);
	\draw (a24) to (a44);
	\draw (a25) to (a35);					
\end{scope}

\begin{scope}[xshift=280, yshift=-75]
	\node[label=above:$\Xi$] at (0,0) {$\longrightarrow$};
\end{scope}

\begin{scope}[xshift=320, scale=0.9]
	\vertex[fill, minimum size=3pt, label=above:\tiny$\alpha_0$, color=lava](a30) at (2,0) {};
	\vertex[fill, minimum size=3pt, label=above:\tiny$\alpha_1$, color=lava](a40) at (3,0) {};
	\vertex[fill, minimum size=3pt, label=above:\tiny$\alpha_2$, color=lava](a50) at (4,0) {};
	\vertex[fill, minimum size=3pt, label=above:\tiny$\alpha_3$, color=lava](a60) at (5,0) {};
	\vertex[fill, minimum size=3pt, label=above:\tiny$\alpha_4$, color=lava](a70) at (6,0) {};
	\vertex[fill, minimum size=3pt, label=above:\tiny$\alpha_5$](a80) at (7,0) {};
	\vertex[fill, minimum size=3pt, label=above:\tiny$\alpha_6$](a90) at (8,0) {};			
	\vertex[fill, minimum size=3pt, color=applegreen](a31) at (2,-1) {}; 
	\vertex[fill, minimum size=3pt, color=applegreen](a32) at (2,-2) {}; 
	\vertex[fill, minimum size=3pt, color=lava](a33) at (2,-3) {};	
	\vertex[fill, minimum size=3pt, color=bleudefrance](a34) at (2,-4) {}; 
	\vertex[fill, minimum size=3pt, color=bleudefrance](a35) at (2,-5) {}; 
	\vertex[fill, minimum size=3pt, color=applegreen](a36) at (2,-6) {};
	\vertex[fill, minimum size=3pt, color=applegreen](a41) at (3,-1) {}; 
	\vertex[fill, minimum size=3pt, color=applegreen](a42) at (3,-2) {}; 
	\vertex[fill, minimum size=3pt, color=lava](a43) at (3,-3) {};
	\vertex[fill, minimum size=3pt, color=bleudefrance](a44) at (3,-4) {}; 
	\vertex[fill, minimum size=3pt](a45) at (3,-5) {};
	\vertex[fill, minimum size=3pt, color=applegreen](a51) at (4,-1) {}; 
	\vertex[fill, minimum size=3pt, color=applegreen](a52) at (4,-2) {}; 
	\vertex[fill, minimum size=3pt](a53) at (4,-3) {};
	\vertex[fill, minimum size=3pt](a54) at (4,-4) {};
	\vertex[fill, minimum size=3pt, color=applegreen](a61) at (5,-1) {}; 
	\vertex[fill, minimum size=3pt, color=applegreen](a62) at (5,-2) {}; 
	\vertex[fill, minimum size=3pt](a63) at (5,-3) {};
	\vertex[fill, minimum size=3pt, color=applegreen](a71) at (6,-1) {}; 
	\vertex[fill, minimum size=3pt, color=applegreen](a72) at (6,-2) {};
	\vertex[fill, minimum size=3pt](a81) at (7,-1) {};	
	\draw[color=lava] (a30) to (a70);
	\draw[color=applegreen] (a31) to (a71);
	\draw[color=applegreen] (a32) to (a72);
	\draw[color=lava] (a33) to (a43);
	\draw[color=bleudefrance] (a34) to (a44);					
\end{scope}

\end{tikzpicture}
\end{center}
\caption{The bijection between out-degree gravity diagrams for $\car{12}{3}$ and $\mcar{10}{3}$. The colours of the gravity line segments for $\mcar{10}{3}$ are red, blue, and green, in that order.}
\label{fig.mcarcar}
\end{figure}


We have seen that at net flow $(1,0,\ldots,0,-1)$, the flow polytopes of the graphs $\mcar{a+2}{k}$ and $\car{n+1}{k}$ have the same volume.
Next, we will see that the volumes of the flow polytopes of this pair of graphs are closely related at other net flows as well.



\subsection{Unified diagrams for the $k$-multicaracol graph}

\begin{theorem}\label{thm.mcarabbb}
For $k,a\in \bbN$,
$$\vol\calF_{\mcar{a+2}{k}}(kx,y,\ldots,y,-(kx+ay)) = \Cat(a,ka-1) \cdot (kx)^{ka-1} (kx+ay)^{a-1}.$$
\end{theorem}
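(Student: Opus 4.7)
The plan is to mimic the proof of Theorem~\ref{thm.abbb}, applying the parking-function Lidskii formula to $G=\mcar{a+2}{k}$ and stratifying the unified diagrams by the level $i$ immediately after the east step associated with vertex $0$. For $G=\mcar{a+2}{k}$, the shifted out-degree vector is $\bt=(ka-1,1,\ldots,1,0)$ and $m-n=(k+1)a-2=a+b-1$ with $b=ka-1$. The key simplification relative to the $k$-caracol setting is that the initial portion $\bp$ of any $\bt$-Dyck path is a single run of $m-n-i$ north steps in column $0$, whose standardized labeling $\pi$ is forced to be the identity. Hence every truncated level-$(1,i)$ unified diagram admits exactly one completion, so $|\mathcal{SU}_G^{(1,i)}|=|\calU_G^{(1,i)}|$; no analog of Lemma~\ref{lem.thelemma} is required.

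Next I would construct an analog of the bijection $\Theta$ from Theorem~\ref{thm.theta} that sends a truncated unified diagram $(\bq,\kappa,\Gamma)$ for $\mcar{a+2}{k}$ to a $k$-multi-labeled Dyck path in $\calT_k(a-1,i)$. Every line segment of $\Gamma$ emanates from the $0$-th column with one of $k$ colors $c_1,\ldots,c_k$, and a colored line segment of color $c_\ell$ ending in the $h$-th column is slid to a new north step at $x=h$ labeled $\overline{k-\ell}$. The conventions on colored gravity diagrams for $\mcar{a+2}{k}$ (longer segments above shorter, and $c_p$ above $c_q$ for $p<q$ within a length class) precisely match the required ordering $\overline{k-1}<\cdots<\overline{0}$ of the multi-labels, so the map is a bijection, and Theorem~\ref{thm.multilabel} gives $|\calU_G^{(1,i)}|=T_k(a-1,i)$.

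Putting these ingredients into the parking-function Lidskii formula (with the net flow labels contributing $(kx)^{m-n-i}y^i$) yields
\begin{equation*}
\vol\calF_G(\ba) = \sum_{i=0}^{a-1}\binom{a+b-1}{i}(kx)^{a+b-1-i}y^i\,T_k(a-1,i).
\end{equation*}
Substituting $T_k(a-1,i)=a^{i-1}\binom{a+b-1-i}{b}$, using the identity $\binom{a+b-1}{i}\binom{a+b-1-i}{b}=\binom{a+b-1}{b}\binom{a-1}{i}$ together with $\Cat(a,b)=\tfrac{1}{a}\binom{a+b-1}{b}$, the sum becomes
\begin{equation*}
\Cat(a,b)(kx)^b\sum_{i=0}^{a-1}\binom{a-1}{i}(kx)^{a-1-i}(ay)^i \;=\; \Cat(a,b)(kx)^b(kx+ay)^{a-1},
\end{equation*}
which is the desired formula.

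The main obstacle will be setting up the multicaracol analog of $\Theta$ rigorously and verifying that the conventions for colored out-degree gravity diagrams of $\mcar{a+2}{k}$ correspond bijectively to the label ordering on a $k$-multi-labeled Dyck path; the compatibility with the bijection $\Xi$ of Proposition~\ref{prop.mcarcar} should guide this. The binomial manipulation in the final step is essentially the same as the one carried out for Theorem~\ref{thm.abbb}, and notably the leading factor here is $(kx)^{ka-1}$ rather than $k^{ka-2}x^{ka-1}$, because no power of $k$ is absorbed by cyclic orbits (there is no analog of Corollary~\ref{cor.thecorollary} in play).
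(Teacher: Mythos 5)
Your proposal is correct and follows essentially the same route as the paper: stratify by the level after the column of vertex $0$, observe that completions are unique there, identify the truncated diagrams with $\calT_k(a-1,i)$, and finish with the same binomial computation. The only cosmetic difference is that the paper reaches $\left|\,\calU_{\mcar{a+2}{k}}^{(0,i)}\right|=T_k(a-1,i)$ by extending the bijection $\Xi$ of Proposition~\ref{prop.mcarcar} to truncated diagrams and then invoking Theorem~\ref{thm.theta}, whereas you build the composite bijection to $k$-multi-labeled Dyck paths directly; these are the same map.
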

\begin{proof} Let $\ba=(kx, y,\ldots, y, -(kx+ay))$.  
We have
$$\vol\calF_{\mcar{a+2}{k}}(\ba)
= \Big|\,\calU_{\mcar{a+2}{k}} (\ba)\Big|
= \sum_{i=0}^{a-1} {(k+1)a-2\choose i} (kx)^{(k+1)a-2-i} y^i 
	\Big|\,\calU_{\mcar{a+2}{k}}^{(0,i)}\Big|.
$$
Letting $n=a+k$, we can extend the bijection $\Xi$ in Proposition~\ref{prop.mcarcar} to the set of truncated unified diagrams 
$$\widehat{\Xi}: \calU_{\mcar{a+2}{k}}^{(0,i)} \rightarrow \calU_{\car{n+1}{k}}^{(k,i)}: \Xi(\bq,\kappa,\Gamma) \mapsto (\bq,\kappa, \Xi(\Gamma)),$$
so $|\,\calU_{\mcar{a+2}{k}}^{(0,i)}|= |\,\calU_{\car{n+1}{k}}^{(k,i)}|$.
Applying Theorem~\ref{thm.theta} and computing in the same way as in Theorem~\ref{thm.abbb},
\begin{align*}
\vol\calF_{\mcar{a+2}{k}}(\ba)
&= \sum_{i=0}^{a-1} {(k+1)a-2\choose i} (kx)^{(k+1)a-2-i} y^i {(k+1)a-2-i\choose ka-1}a^{i-1}\\
&= \Cat(a,b)\cdot (kx)^{ka-1} (kx+ay)^{a-1}.
\end{align*}
\end{proof}

\begin{corollary} We have the following specializations:
\begin{enumerate}
\item[(a)] $\displaystyle \vol\calF_{\mcar{a+2}{k}}(k,1,\ldots,1, -k-a) = \Cat(a,ka-1) \cdot k^{ka-1} (k+a)^{a-1}. $
\item[(b)] $\displaystyle \vol\calF_{\mcar{a+2}{k}}(k,0,\ldots,0, -k) = \Cat(a,ka-1) \cdot k^{(k+1)a-2}. $
\end{enumerate}
\end{corollary}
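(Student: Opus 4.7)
The plan is to deduce both specializations as direct consequences of Theorem~\ref{thm.mcarabbb}, which computes $\vol\calF_{\mcar{a+2}{k}}(kx,y,\ldots,y,-(kx+ay))$ as $\Cat(a,ka-1) \cdot (kx)^{ka-1}(kx+ay)^{a-1}$ for all $x \in \bbR_{>0}$ and $y \in \bbR_{\geq 0}$. Since the corollary simply records two natural specializations of the parameters $x$ and $y$, no new combinatorial model, bijection, or Lidskii-type identity is required; the argument is entirely a substitution argument and the overall strategy is to choose the correct values of $x$ and $y$ in each case.

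For part (a), I would set $x=1$ and $y=1$, so that the net flow vector $(kx,y,\ldots,y,-(kx+ay))$ specializes to $(k,1,\ldots,1,-(k+a))$. Plugging these values into the formula from Theorem~\ref{thm.mcarabbb} immediately yields $\Cat(a,ka-1) \cdot k^{ka-1} (k+a)^{a-1}$, matching the claim.

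For part (b), I would set $x=1$ and $y=0$, so that the net flow vector specializes to $(k,0,\ldots,0,-k)$. Substituting these values produces $\Cat(a,ka-1) \cdot k^{ka-1} \cdot k^{a-1}$, and collecting the exponents via $(ka-1)+(a-1) = (k+1)a-2$ gives the claimed expression $\Cat(a,ka-1) \cdot k^{(k+1)a-2}$.

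Because Theorem~\ref{thm.mcarabbb} is already established in the preceding discussion, nothing further is needed. The only step to verify is the elementary exponent arithmetic in part (b), so there is no substantive obstacle to overcome here. If anything, the mild conceptual point worth flagging is that part (b) is the natural ``multicaracol analogue'' of the specialization in Theorem~\ref{thm.kparkingtriangle} that counts standardized level-$(k,0)$ unified diagrams, corresponding to killing the net flow on the Pitman--Stanley portion of $\mcar{a+2}{k}$ and concentrating all of it at the new vertex $0$.
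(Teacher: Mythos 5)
Your proposal is correct and is exactly the argument the paper intends: the corollary is stated without proof precisely because it is the direct substitution $x=y=1$ for part (a) and $x=1$, $y=0$ for part (b) into Theorem~\ref{thm.mcarabbb}, with the exponent arithmetic $(ka-1)+(a-1)=(k+1)a-2$ in part (b) checking out.
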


\begin{remark}
Let $a=n-k$ so that $b=ka-1$, and comparing the results of Theorem~\ref{thm.mcarabbb} and Theorem~\ref{thm.abbb}, we see that
$$\vol\calF_{\mcar{n-k+2}{k}}(kx, y^{n-k}, -kx-(n-k)y)
=k\cdot \vol\calF_{\car{n+1}{k}}(x^k, y^{n-k}, -kx-(n-k)y). $$
This implies that one can obtain a different proof of Theorem~\ref{thm.abbb} (and its specialization Theorem~\ref{thm.oneoneone}) if one can construct a $k$ to $1$ map from the set of unified diagrams for $\calF_{\mcar{n-k+2}{k}}(kx, y^{n-k}, -kx-(n-k)y)$ to the set of unified diagrams for $\calF_{\car{n+1}{k}}(x^k, y^{n-k}, -kx-(n-k)y)$.  It may be interesting to understand this map from a geometric viewpoint.
\end{remark}

\appendix
\section{Some $k$-parking triangles} \label{sec.app}
\renewcommand{\arraycolsep}{1.5mm}
\begin{table}[ht!]
\parbox{.49\linewidth}{\centering
$\begin{array}{cllllll}
\hline
r\backslash i &0&1&2&3&4&5\\ 
\hline
0&1\\
1&1&1\\
2&2&3&3\\
3&5&10&16&16\\
4&14&35&75&125&125\\
5&42&126&336&756&1296&1296\\
\hline
\end{array}
$
\caption*{The $1$-parking triangle $T_1(r,i)$.}
}\hfill
\parbox{.49\linewidth}{\centering
$\begin{array}{cllllll}
\hline
r\backslash i &0&1&2&3&4&5\\ 
\hline
0&	1\\
1&	2&1\\
2&	7&6&3\\
3&	30&36&32&16\\
4&	143&220&275&250&125\\
5&	728&1365&2184&2808&2592&1296\\
\hline
\end{array}
$
\caption*{The $2$-parking triangle $T_2(r,i)$.}
}
\end{table}

\renewcommand{\arraycolsep}{1mm}
\begin{table}[ht!]
\parbox{.49\linewidth}{\centering
$
\begin{array}{cllllll}
\hline
r\backslash i &0&1&2&3&4&5\\ 
\hline
0&	1\\
1&	3&1\\
2&	15&9&3\\
3&	91&78&48&16\\
4&	612&680&600&375&125\\
5&	4389&5985&6840&6156&3888&1296\\
\hline
\end{array}
$
\caption*{The $3$-parking triangle $T_3(r,i)$.}
}\hfill
\parbox{.49\linewidth}{\centering
$
\begin{array}{cllllll}
\hline
r\backslash i &0&1&2&3&4&5\\ 
\hline
0&	1\\
1&	4&1\\
2&	26&12&3\\
3&	204&136&64&16\\
4&	1771&1540&1050&500&125\\
5&	16380&17550&15600&10800&5184&1296\\
\hline
\end{array}
$
\caption*{The $4$-parking triangle $T_4(r,i)$.}
}
\end{table}

%




\thebibliography{99}
\bibitem{A}
	A.D. Alexandrov.
	{\em To the theory of mixed volumes of convex bodies Part IV,}
	Mat. Sb., {\bf 3}(1938) 227--249.

\bibitem{ALW}
	D. Armstrong, N.A. Loehr and G.S. Warrington,
	{\em Rational parking functions and Catalan numbers,}
	Ann. Comb. {\bf12} (2016), 21--58.


\bibitem{BV}  
	M.W. Baldoni and M. Vergne,
	{\em Kostant partition functions and flow polytopes,}
	Transform. Groups, {\bf 13}(3-4) (2008), 447--469.

\bibitem{BGHHKMY}
	C. Benedetti, R.S. Gonz\'alez D'Le\'on, C.R.H. Hanusa, P.E. Harris, A. Khare, A.H. Morales and M. Yip, 
	{\em A combinatorial model for computing volumes of flow polytopes,} 
	Trans. Amer. Math. Soc. {\bf 372} (2019), 3369--3404.

\bibitem{CRY} 
	C.S. Chan, D.P. Robbins, and D.S. Yuen,
	{\em On the volume of a certain polytope,}
	Experiment. Math., {\bf 9}(1) (2000), 91--99.
	
\bibitem{CKM}
	S. Corteel, J.S. Kim, and K. M\'esz\'aros,
	{\em Flow polytopes with Catalan volumes,}
	C. R. Math. Acad. Sci. Paris {\bf 355}(3) (2017), 248--259.


\bibitem{F1}
	W. Fenchel,
	{\em In\'egalit\'es quadratiques entre les volumes mixtes des corps convexes,}
	C. R. Acad. Sci. Paris, {\bf 203} (1936), 647--650.

\bibitem{F2}
	W. Fenchel,
	{\em G\'en\'eralizations du th\'eor\`eme de Brunn et Minkowski concernant les corps convexes,}
	C. R. Acad. Sci. Paris, {\bf 203} (1936), 764--766.	

\bibitem{M}
	K. M\'esz\'aros,
	{\em Product formulas for volumes of flow polytopes,}
	Proc. Amer. Math. Soc., {\bf 143} (2015), 937--954.
	
\bibitem{MM18} 
	K. M\'esz\'aros and A.H. Morales,
	{\em Volumes and Ehrhart polynomials of flow polytopes,}
	Math. Z. (2019).

\bibitem{MMR} 
	K. M\'esz\'aros, A.H. Morales, and B. Rhoades,
	{\em The polytope of Tesler matrices,}
	Sel. Math. New Ser., {\bf 23} (2017), 425--454.
	
      
\bibitem{PS}
	J. Pitman and R.P. Stanley,
	{\em A polytope related to empirical distributions, plane trees, parking functions, and the associahedron,}
	Discrete Comput. Geom., {\bf 27}(4) (2002), 603--634.
	
\bibitem{Z}
	D. Zeilberger,
	{\em Proof of a conjecture of Chan, Robbins, and Yuen,}
	Electron. Trans. Number. Anal., {\bf 9} (1999), 147--148.
\end{document}